\documentclass[12pt, reqno]{amsart}

\usepackage{amsmath,amssymb,amsthm}

\usepackage{amssymb}
\usepackage{amsthm}
\usepackage{amsmath}
\usepackage{color}
\usepackage{ulem}
\usepackage[all]{xy}
\usepackage{graphicx}
\usepackage[margin=30mm]{geometry}

\numberwithin{equation}{section}
\newtheorem{theorem}{Theorem}[section]
\newtheorem{corollary}[theorem]{Corollary}
\newtheorem{lemma}[theorem]{Lemma}
\newtheorem{proposition}[theorem]{Proposition}
\newtheorem{example}[theorem]{Example}
\newtheorem{remark}[theorem]{Remark}
\newtheorem{definition}[theorem]{Definition}

\begin{document}

\title[Intersection of Real Flag Manifolds]{
The intersection of two real flag manifolds in\\ a complex flag manifold, and 
the canonical form of a compact symmetric triad
}
\author{Kurando Baba}
\address{Department of Mathematics,
Faculty of Science and Technology,
Tokyo University of Science,
2641 Yamazaki, Noda, 278-8510, Chiba, Japan,
Research Institute for Science and Technology at Tokyo University of Science,
Division of Joint Research of Geometry and Natural Science}
\email{kurando.baba@rs.tus.ac.jp}

\author{Osamu Ikawa}
\address{
Faculty of Arts and Sciences,
Kyoto Institute of Technology,
Matsugasaki, Sakyoku,
Kyoto 606-8585, Japan
}
\email{ikawa@kit.ac.jp}

\author{Shinji Ohno}
\address{
Department of Mathematics,
College of Humanities and Sciences,
Nihon University,
3-25-40, Sakurajosui, Setagaya-ku,
Tokyo 156-8550, Japan
}
\email{ohno.shinji@nihon-u.ac.jp}


\subjclass[2020]{Primary 53C30; Secondary 17B22, 53C35}

\keywords{real flag manifold, complex flag manifold, antipodal set, symmetric triad}

\thanks{The first author was partially supported by JSPS KAKENHI Grant Number JP25K07017.}.

\begin{abstract}
The necessary and sufficient conditions for the intersection of two real flag manifolds in a complex flag manifold to be discrete 
are stated including a previous known result. 
The discrete intersection of real flag manifolds is the orbit of a Weyl group, which is an ‘antipodal set’ of a complex flag manifold. 
The triple of two real flag manifolds and a complex flag manifold as an ambient space is constructed from a compact symmetric triad. 
It is shown that the compact symmetric triad can be taken to be ‘canonical’.
\end{abstract}

\maketitle

\section{Introduction}

A {\it real form} of a K\"ahler manifold is a connected component of the fixed-point set  of an anti-holomorphic involutive 
isometry, which is a totally geodesic Lagrangian submanifold. 
In this paper, a {\it complex flag manifold} is defined by an adjoint orbit $\mathrm{Ad}(G)x_0$ 
of a compact connected simple Lie group $G$, 
where $x_0(\not=0)$ is an element of the Lie algebra $\mathfrak{g}$ of $G$. 
It is known that a complex flag manifold $\mathrm{Ad}(G)x_0$ is compact and simply connected, and a 
$G$-homogeneous K\"ahler manifold with respect to a $G$-invariant K\"ahler metric (\cite{Borel}). 

A pair of complex flag manifold and its real flag manifold is constructed from 
a compact connected simple Lie group $G$ and its involution $\theta$, 
where an involution means an involutive automorphism. 
Indeed, a real flag manifold is an orbit of the identity component of the 
fixed-point subgroup of $\theta$ (see Section 2 for the precise definition). 
A real flag manifold is a real form of a complex flag manifold. 
A triple of a complex flag manifold and two real flag manifolds is constructed from a triple of a compact connected 
simple Lie group $G$ and its two involutions $\theta_1$ and $\theta_2$. 
A triple $(G,\theta_1,\theta_2)$ is called a {\it compact symmetric triad}. 

In general, the intersection of two real flag manifolds is a totally geodesic submanifold in the complex flag manifold 
by \cite[p.~61]{KN}.
In previous study \cite{IIOST}, some necessary and sufficient conditions for the intersection of two real flag manifolds to be discrete 
were investigated (see (2), (3) of Theorem~\ref{thm:int}). 
In particular, any discrete intersection is an antipodal set of the complex flag manifold
in the sense of \cite{IIOST}.
This article adds a new necessary and sufficient condition which are easily verified (see (4) of Theorem~\ref{thm:int}).
We show that a discrete intersection of two real flag manifolds in a complex flag manifold is an orbit of a Weyl group (Theorem~\ref{thm:int}). 
Consequently, we find that the cardinality of the discrete intersection is at least some constant greater than or equal to two, which is determined by the two involutions (Corollary~\ref{cor:discreteinteraection0}).
This estimate is a refinement of the fact that the intersection is non-empty (see \cite[Lemma~4.1]{IIOST}).
Moreover, we show that this estimate is sharp (see the argument following Corollary~\ref{cor:discreteinteraection0}).

Our study of intersections of real flag manifolds is motivated by the study
of antipodal sets introduced by Chen-Nagano \cite{CN}, by the computation of
Lagrangian Floer homology, and by the generalized Arnold-Givental
conjecture (\cite{IST}, \cite{Oh}).
With these motivations, the present paper extends some previous results of
\cite{ITT2014},  \cite{ITT2015} as well as some results of \cite{IIOST}.
In fact, in \cite{ITT2014}, the second author, Tanaka and Tasaki studied a necessary and sufficient condition for the intersection of two real flag manifolds which are congruent to real Grassmann manifolds in a complex Grassmann manifold to be discrete, and showed that the discrete intersection is an antipodal set of the complex Grassmann manifold, and it is an orbit of a Weyl group.
Subsequently, in \cite{ITT2015},  they dealt with the intersection of two real flag manifolds which
are not necessarily congruent with each other in a Hermitian symmetric space of compact type.
We refer to \cite{IST} for the relation of the study of intersections of real flag manifolds
to the computation of Lagrangian Floer homology and to the generalized Arnold-Givental conjecture.
The study in \cite{IST} on intersections of two real flag manifolds in irreducible Hermitian symmetric spaces was extended
from irreducible Hermitian symmetric spaces to complex flag manifolds in \cite{IIOST}.
While \cite{IIOST} required the commutativity of the compact symmetric triad realizing
the two real forms in a complex flag manifold, in the present paper
we extend the results of \cite{IIOST} without assuming the commutativity
by applying the structure theory of compact symmetric triads mentioned below.%

Matsuki (\cite{Matsuki}, \cite{Matsuki2}) introduced a nontrivial equivalence relation on the set of compact symmetric triads. 
The first and second authors (\cite{BI}) showed that there exists a representative which has good properties in each equivalence class, which we called a {\it canonical form}.  
In Section~\ref{sec:main2} we formulate the problem about the intersection of two real flag manifolds 
in the viewpoint of canonical forms (Theorem~\ref{thm:can2}), 
and show some properties under the viewpoint (Theorems~\ref{thm:discrete}$\sim$\ref{thm:-1inv}).  
By using canonical form theory, we can and do study the intersection of two real flag manifolds without the commutativity condition. 
In addition, the third author's results in \cite{Ohno} is a useful tool to analyze 
the intersection.
For instance, by using the results in \cite{Ohno}, we can calculate the dimension of the intersection whether the intersection is discrete or not. 
In particular, we can and do provide the necessary and sufficient condition for the intersection to be discrete.
In this way, the formulation provides a new perspective for the study of the intersection theory.
In addition, we can use double Satake diagrams defined by the first and second authors in \cite{BI}. 
The triple of a complex flag manifold and two real flag manifolds in it is completely determined 
by the data of the double Satake diagram (see Section~\ref{sec:main2}). 
Hence there is a possibility that we can calculate various quantities related to 
the triple of complex flag manifold and its two real flag manifolds using 
double Satake diagrams even if it is difficult to calculate it directly. 
In Section~\ref{sec:main2} 
we discuss the classification principle for complex flag manifolds and pairs of real flag manifolds in them.

\section{The intersection of two real flag manifolds in a complex flag manifold}

Throughout this paper, let $G$ denote a compact connected simple Lie group, 
and $\mathfrak{g}$ denote the Lie algebra of $G$.  
Take a bi-invariant Riemannian metric $\langle\;,\;\rangle$ on $G$. 
In this paper, for a set $\mathcal{S}$ and a map $\phi:\mathcal{S}\to\mathcal{S}$,
we set $F(\phi,\mathcal{S})=\{x\in\mathcal{S}\mid \phi(x)=x\}$.

For an involution $\theta$ on $G$,
define the subalgebra $\mathfrak{k}$ of $\mathfrak{g}$, and the subspace $\mathfrak{p}$ of $\mathfrak{g}$ by 
$\mathfrak{k}=F(\theta,\mathfrak{g})$
and $\mathfrak{p}=F(-\theta,\mathfrak{g})$,
respectively,
where $\theta$ denotes an involution on $\mathfrak{g}$
which is induced from $\theta$ on $G$.
Then $\mathfrak{k}$ coincides with the Lie algebra of the closed subgroup $F(\theta,G)$ of $G$, and $\mathfrak{g}=\mathfrak{k}\oplus\mathfrak{p}$ is a 
direct sum decomposition of $\mathfrak{g}$, which we call the canonical decomposition of 
$\mathfrak{g}$ with respect to $\theta$. 
For $x_0\in \mathfrak{p}-\{0\}$, set $M=\mathrm{Ad}(G)x_0$. 
It is known by Borel (\cite{Borel}) that $M$ is compact and simply connected, and has a structure of $G$-homogeneous K\"{a}hler manifold, 
which we call the {\it complex flag manifold}.  
Set $L=\mathrm{Ad}(F(\theta,G))x_0$, where $\mathrm{Ad}$ is the adjoint representation of $G$ restricted to $F(\theta,G)$. 
Then it is known by \cite[Prop. 2.1]{HKT} that $L$ is connected and $L=\mathrm{Ad}(F(\theta,G)_0)x_{0}$, where 
$F(\theta,G)_0$ is the identity component of $F(\theta,G)$. 
Let $K$ be a closed subgroup of $G$ which satisfies 
$F(\theta,G)_0\subset K\subset F(\theta,G)$, that is, $(G,K)$ is a 
Riemannian symmetric pair. 
Based on the above, we have $L=\mathrm{Ad}(K)x_0$.
Furthermore $L$ is a real form of $M$ (see \cite{IIOST}, \cite{Ohnita}).
We call $L$ the {\it real flag manifold} of $M$.

\begin{proposition}
The flag manifolds $M$ and $L$ are full in $\mathfrak{g}$ and $\mathfrak{p}$, respectively. 
That is, $\mathrm{span}(M)=\mathfrak{g}$ and $\mathrm{span}(L)=\mathfrak{p}$ hold, where 
$\mathrm{span}(M)$ and $\mathrm{span}(L)$ denote the linear span of $M$ and $L$, respectively.
\end{proposition}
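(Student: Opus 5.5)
The plan is to show that each linear span is an invariant subspace for an appropriate representation, and then use irreducibility or the fact that the relevant symmetric space is a product of irreducible ones.

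For $M$, note that $V=\mathrm{span}(M)$ is an $\mathrm{Ad}(G)$-invariant subspace of $\mathfrak{g}$. Differentiating, $V$ is invariant under $\mathrm{ad}(\mathfrak{g})$, so $V$ is an ideal of $\mathfrak{g}$. It contains $x_0\neq 0$. Since $\mathfrak{g}$ is simple, $V=\mathfrak{g}$.

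For $L$, the space $W=\mathrm{span}(L)\subset\mathfrak{p}$ is $\mathrm{Ad}(K)$-invariant, hence $[\mathfrak{k},W]\subset W$. I then consider $\mathfrak{h}=[W,W]+W$ and claim it is an ideal of $\mathfrak{g}$. To verify this, I would use the orthogonal complement $W^\perp$ of $W$ in $\mathfrak{p}$ with respect to the $\mathrm{Ad}$-invariant inner product. It is also $\mathrm{ad}(\mathfrak{k})$-invariant. For $u\in W^\perp$, $w,w'\in W$, $k\in\mathfrak{k}$, the invariance of the metric gives $\langle [u,w],k\rangle=\langle u,[w,k]\rangle=0$, since $[w,k]\in W$. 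Hence $[W^\perp,W]\subset\mathfrak{k}$ is orthogonal to all of $\mathfrak{k}$, so $[W^\perp,W]=0$. This is the step I expect to carry the main weight.

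It then follows that $[\mathfrak{p},W]=[W,W]$. Using the Jacobi identity, $[\mathfrak{k},[W,W]]\subset[W,W]$ and $[\mathfrak{p},[W,W]]\subset[[\mathfrak{p},W],W]+[W,[\mathfrak{p},W]]\subset [[W,W],W]$. The last space is contained in $W$, because $[W,W]\subset\mathfrak{k}$ preserves $W$. Together with $[\mathfrak{k},W]\subset W$ and $[\mathfrak{p},W]=[W,W]$, this shows $\mathfrak{h}$ is an ideal of $\mathfrak{g}$. It is nonzero since $x_0\in W$, so simplicity gives $\mathfrak{h}=\mathfrak{g}$. Comparing $\mathfrak{p}$-components, using $[W,W]\subset\mathfrak{k}$, we get $W=\mathfrak{p}$.

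The only subtlety is remembering that $(G,K)$ need not be irreducible as a $K$-module on $\mathfrak{p}$ in general. The ideal argument sidesteps this, relying only on simplicity of $\mathfrak{g}$. The orthogonality step $[W^\perp,W]=0$ is the key computation.
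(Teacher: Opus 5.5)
Your proof is correct. The argument for $M$ is identical to the paper's. For $L$, the paper also starts from the fact that $\mathrm{span}(L)$ is a nonzero $\mathrm{Ad}(K)$-invariant subspace of $\mathfrak{p}$. It then simply cites Helgason's theorem that the compact symmetric pair $(G,K)$ is irreducible when $G$ is simple, i.e.\ that $\mathrm{ad}(\mathfrak{k})$ acts irreducibly on $\mathfrak{p}$.

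Your construction of the ideal $[W,W]+W$, via $[W^{\perp},W]=\{0\}$, uses nothing about $W$ beyond its being a nonzero $\mathrm{ad}(\mathfrak{k})$-invariant subspace of $\mathfrak{p}$. So it is exactly a self-contained proof of that irreducibility, and essentially the standard one. Consequently your closing remark is off: you are not sidestepping irreducibility, you are reproving it.

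In the present setting, with $\mathfrak{g}$ compact simple, the isotropy representation on $\mathfrak{p}$ \emph{is} irreducible over $\mathbb{R}$. Reducibility appears only for non-simple $\mathfrak{g}$, or after complexification (e.g.\ $\mathfrak{p}^{\mathbb{C}}=\mathfrak{p}^{+}\oplus\mathfrak{p}^{-}$ in the Hermitian case).

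The paper's version gains brevity; yours gains independence from the reference. Incidentally, the $\mathrm{ad}(\mathfrak{k})$-invariance of $W^{\perp}$ that you mention is never used.
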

\begin{proof}
We show the first part. 
Since $M$ is an $\mathrm{Ad}(G)$-orbit, $\mathrm{span}(M)$ is an $\mathrm{Ad}(G)$-invariant subspace of $\mathfrak{g}$. 
Hence, $\mathrm{span}(M)$ is an ideal of $\mathfrak{g}$. 
Since $M$ contains a non-zero element $x_0$, we have $\mathrm{span}(M)\not=\{0\}$. 
Since $\mathfrak{g}$ is simple, we have $\mathrm{span}(M)=\mathfrak{g}$. 

Next, we show the latter part. 
Since $L$ is an $\mathrm{Ad}(K)$-orbit, $\mathrm{span}(L)$ is an $\mathrm{Ad}(K)$-invariant subspace of $\mathfrak{p}$. 
Since $L$ contains a non-zero element $x_0$, we have $\mathrm{span}(L)\not=\{0\}$. 
Since $G$ is simple, the compact symmetric pair $(G,K)$ is irreducible (\cite[p.~379,~Theorem~5.3]{Helgason}). 
Thus, we get $\mathrm{span}(L)=\mathfrak{p}$.
\end{proof}

We study the intersection of two real flag manifolds in a complex flag manifold. 
In order to do this, let $\theta_1$ and $\theta_2$ be two involutions on $G$, and 
$\mathfrak{g}=\mathfrak{k}_i\oplus\mathfrak{p}_i$ be the canonical decomposition of $\mathfrak{g}$ with respect to $\theta_i$ for 
$i=1,2$.
Let $K_1$ and $K_2$ be two closed subgroups of $G$ which satisfy $F(\theta_i,G)_0\subset K_i\subset F(\theta_i,G)$ for $i=1,2$.
Take $x_i\in \mathfrak{p}_i-\{0\}$. 
We assume that  there exists $g\in G$ such that $x_2=\mathrm{Ad}(g)x_1$. 
Set $M=\mathrm{Ad}(G)x_1=\mathrm{Ad}(G)x_2$ and $L_i=\mathrm{Ad}(K_i)x_i$. 
Then it is known by \cite{IIOST} that $L_1\cap L_2\not=\emptyset$. 
Hence in order to study $L_1\cap L_2$, we can assume $\mathfrak{p}_1\cap \mathfrak{p}_2\not=\{0\}$ and set the following. 
For $x_0\in \mathfrak{p}_1\cap \mathfrak{p}_2-\{0\}$, set
\[
L_i:=\mathrm{Ad}(K_i)x_0\subset M:=\mathrm{Ad}(G)x_0.
\]
It is known by a theorem of S.~Kobayashi (\cite[p.~61]{KN}) that $L_1\cap L_2$ is a totally geodesic submanifold of $M$, and by 
\cite{IIOST} that 
\begin{equation}
\label{eqn:intersection}
L_1=M\cap {\mathfrak p}_1,\quad L_2=M\cap {\mathfrak p}_2,\quad 
L_1\cap L_2=M\cap ({\mathfrak p}_1\cap {\mathfrak p}_2). 
\end{equation}
Take a maximal abelian subspace $\mathfrak{a}$ of ${\mathfrak p}_1\cap {\mathfrak p}_2$ containing $x_0$. 
Then we have $\dim \mathfrak{a}\geq 1$. 
Let ${\mathfrak g}^{\mathbb C}$ denote the complexification of $\mathfrak{g}$. 
For $\alpha\in\mathfrak{a}$, we define the subspace $\mathfrak{g}(\mathfrak{a},\alpha)$ of ${\mathfrak g}^{\mathbb C}$ by
\begin{equation}
\label{eqn:compg}
\mathfrak{g}(\mathfrak{a},\alpha)=
\{X\in {\mathfrak g}^{\mathbb C}\mid [H,X]=\sqrt{-1}\langle\alpha,H\rangle X\quad (H\in\mathfrak{a})\}.
\end{equation}
Then we have 
$[\mathfrak{a},\mathfrak{g}(\mathfrak{a},\alpha)]
\subset \mathfrak{g}(\mathfrak{a},\alpha)$. 
Let $\bar{ }$ denote the complex conjugation of $\mathfrak{g}^{\mathbb C}$ with respect to $\mathfrak{g}$. 
Then $\overline{\mathfrak{g}(\mathfrak{a},\alpha)}=\mathfrak{g}(\mathfrak{a},-\alpha)$. 
The space $\mathfrak{g}(\mathfrak{a},\alpha)$ is invariant under the composition $\theta_1\theta_2$. 
We define the subset $\tilde{\Sigma}$ of $\mathfrak{a}$ by 
\[
\tilde{\Sigma}:=\{\alpha\in\mathfrak{a}-\{0\}\mid \mathfrak{g}(\mathfrak{a},\alpha)\not=\{0\}\}.
\]

Let $\#(X)$ denote the cardinality of a finite set $X$.
Then the following lemma is known.

\begin{lemma}\label{lem:tildeSigma}
\begin{enumerate}
\item[(1)] $\tilde{\Sigma}$ is an irreducible root system, which spans $\mathfrak{a}$.
\item[(2)] Let $W(\tilde{\Sigma})$ denote the Weyl group of $\tilde{\Sigma}$, and
 $\mathrm{span}(W(\tilde{\Sigma})x_0)$ the linear span of the orbit 
$W(\tilde{\Sigma})x_0$ for $x_0\in \mathfrak{a}-\{0\}$. 
Then we have $\mathrm{span}(W(\tilde{\Sigma})x_0)=\mathfrak{a}$
and $\#(W(\tilde{\Sigma})x_0)\geq \dim\mathfrak{a}+1$.
\end{enumerate}
\end{lemma}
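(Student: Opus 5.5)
Part (1) is known from the structure theory of compact symmetric triads; I would prove it via the reduction to canonical forms, and spanning and irreducibility by direct arguments. Part (2) then follows from elementary facts about irreducible Weyl groups.

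For (1), I would first reduce to the commuting case. By the canonical form theory of \cite{BI} (building on Matsuki \cite{Matsuki}, \cite{Matsuki2}), after replacing $\theta_2$ by a suitable conjugate within its equivalence class, one may assume $\theta_1\theta_2=\theta_2\theta_1$, with $\mathfrak{a}$ maximal abelian in $\mathfrak{p}_1\cap\mathfrak{p}_2$. For commuting $\theta_1,\theta_2$, the classical result of Matsuki/Ikawa is that $\tilde{\Sigma}$ is a root system in $\mathfrak{a}$: it is the root system of the symmetric triad, the union of the root systems of $\mathfrak{g}^{\theta_1\theta_2}$ and of its complement with respect to $\mathfrak{a}$. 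The main obstacle is that the conjugation must not change $\tilde\Sigma$ up to isometry. I would handle this by noting that $\mathfrak{g}(\mathfrak{a},\alpha)$ is defined purely via $\mathrm{ad}(\mathfrak{a})$ on $\mathfrak{g}^{\mathbb C}$, so an isometry of $\mathfrak{g}$ carrying $\mathfrak{p}_1\cap\mathfrak{p}_2$ and its maximal abelian subspace onto the new ones transports $\tilde\Sigma$ isometrically. If this transport is unavailable, I would instead argue directly: $\tilde\Sigma$ is the set of nonzero restrictions to $\mathfrak{a}$ of the roots of $\mathfrak{g}^{\mathbb C}$ with respect to a Cartan subalgebra containing $\mathfrak{a}$. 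Closedness under the reflections $s_\alpha$ then comes from $\mathfrak{sl}_2$-triples built from $\theta_1\theta_2$-compatible root vectors, and integrality of $2\langle\alpha,\beta\rangle/\langle\alpha,\alpha\rangle$ from the representation theory of those $\mathfrak{sl}_2$'s.

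For the remaining claims in (1), I would argue as follows.
\begin{itemize}
\item \emph{Spanning.} If $H\in\mathfrak{a}$ is orthogonal to all of $\tilde\Sigma$, then $H$ commutes with every root space, so $\mathrm{ad}(H)=0$. Since $\mathfrak{g}$ is simple, its center is trivial, hence $H=0$.
\item \emph{Irreducibility.} Suppose $\tilde\Sigma=\Sigma'\sqcup\Sigma''$ with $\Sigma'\perp\Sigma''$ and both nonempty. Then $\mathfrak{a}'=\mathrm{span}\,\Sigma'$ together with the root spaces $\mathfrak{g}(\mathfrak{a},\alpha)$, $\alpha\in\Sigma'$, and their brackets generates an ideal of $\mathfrak{g}$ that is proper, contradicting simplicity. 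To check that it is an ideal, I would use that $\alpha+\beta\notin\tilde\Sigma\cup\{0\}$ for $\alpha\in\Sigma'$ and $\beta\in\Sigma''$, so the corresponding brackets vanish.
\end{itemize}

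For (2), note that $\mathrm{span}(W(\tilde\Sigma)x_0)$ is a nonzero $W(\tilde\Sigma)$-invariant subspace of $\mathfrak{a}$. Since $\tilde\Sigma$ is irreducible and spans $\mathfrak{a}$, the reflection representation of $W(\tilde\Sigma)$ on $\mathfrak{a}$ is irreducible, so this span equals $\mathfrak{a}$.

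For the cardinality bound, $\sum_{w\in W}wx_0$ is $W$-fixed, hence zero, because irreducibility with $\tilde\Sigma\neq\emptyset$ rules out nonzero fixed vectors. Grouping terms, $\sum_{y\in Wx_0} y=0$ up to a positive factor (stabilizer size). So the orbit elements are linearly dependent, while they span $\mathfrak{a}$. Therefore $\#(W x_0)\ge\dim\mathfrak{a}+1$.
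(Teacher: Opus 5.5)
\textbf{The gap is in the root-system part of (1).} Your main step, ``by the canonical form theory of \cite{BI} one may assume $\theta_1\theta_2=\theta_2\theta_1$,'' is false. A canonical triad only has a quasi-canonical torus and $\mathrm{ord}(\theta_1\theta_2)<\infty$; in general no equivalent triad is commuting.

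To see this, note that modulo $\mathrm{Int}(\mathfrak{g})$ the equivalence of Definition~\ref{dfn:Matsuki} replaces $\theta_1\theta_2$ by $\varphi\theta_1\theta_2\varphi^{-1}$. So the order of its class in $\mathrm{Out}(\mathfrak{g})$ is an invariant. Commuting involutions force $(\theta_1\theta_2)^2=1$. But the triad of Section~\ref{sec:order3} has $\theta_1\theta_2=\kappa$, an outer automorphism of order $3$ of $\mathfrak{o}(8)$, so it is equivalent to no commuting triad. The transport of $\tilde\Sigma$, which you single out as the obstacle, is actually harmless. Removing commutativity is exactly what this paper is about.

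So you must either cite the root-system property in the non-commuting generality, as the paper does with Matsuki's Proposition~1, or complete your fallback. As written, the fallback omits its one non-standard point. Since $\mathfrak{g}(0)$ is in general strictly larger than $\mathfrak{a}^{\mathbb C}$, for an arbitrary $X\in\mathfrak{g}(\mathfrak{a},\alpha)$ the bracket $[X,\bar X]$ need not lie in $\mathfrak{a}^{\mathbb C}$. Then you have no $\mathfrak{sl}_2$ whose semisimple element is dual to $\alpha$.

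The repair is as follows. Choose $X\neq 0$ in some $\theta_1\theta_2$-eigenspace $\mathfrak{g}(\mathfrak{a},\alpha,\epsilon)$; this exists because $\theta_1\theta_2$ is orthogonal, hence diagonalizable, and commutes with $\mathrm{ad}(\mathfrak{a})$. Take $X$ fixed by the antilinear involution $X\mapsto-\overline{\theta_1X}$ of that space, so that $\theta_1X=-\bar X$. Set $Y=\sqrt{-1}[X,\bar X]$. Then $Y$ is real, fixed by $\theta_1\theta_2$ and negated by $\theta_1$, so it lies in $\mathfrak{p}_1\cap\mathfrak{p}_2$. It also centralizes $\mathfrak{a}$, so $Y\in\mathfrak{a}$ by maximality. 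Finally, $\langle Y,H\rangle=-\langle X,\bar X\rangle\langle\alpha,H\rangle$ with $\langle X,\bar X\rangle>0$, so $Y$ is a nonzero multiple of $\alpha$. The $\mathfrak{sl}_2$ spanned by $X,\bar X,[X,\bar X]$ now gives integrality and $s_\alpha$-stability, with no finiteness of $\mathrm{ord}(\theta_1\theta_2)$ needed.

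\textbf{The rest is sound.} Your spanning argument is correct. The irreducibility argument works once you add two points. First, $\mathfrak{g}(0)$ preserves each $\mathfrak{g}(\mathfrak{a},\alpha)$, so it normalizes the subalgebra you generate. Second, that subalgebra is an ideal of $\mathfrak{g}^{\mathbb C}$, not of $\mathfrak{g}$; it is conjugation-stable because $\Sigma'=-\Sigma'$, so it cuts out a nonzero proper ideal of $\mathfrak{g}$.

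For the bound in (2) you take a different and better route than the paper. The paper excludes $\#(W(\tilde\Sigma)x_0)=r$ via the fact $\#W(\tilde\Sigma)>r!$, which is a check through the classification, so that a $W$-permuted basis is impossible. You observe instead that $\sum_{y\in W(\tilde\Sigma)x_0}y$ is $W(\tilde\Sigma)$-fixed, hence orthogonal to the spanning set $\tilde\Sigma$ and therefore zero. This shows directly that the spanning orbit is linearly dependent, without any classification.
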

\begin{proof}
We give a brief proof for the completeness.

(1) T.~Matsuki \cite[p.~60, Proposition 1]{Matsuki} proved that $\tilde{\Sigma}$ satisfies the axiom of root system. 
The second author
proved that $\tilde{\Sigma}$ spans $\mathfrak{a}$ in \cite[Lemma~4.12]{ikawa2011}. 
He also proved that the root system $\tilde{\Sigma}$ is irreducible in \cite[Lemma~4.34]{ikawa2011}. 
Although Lemma~4.34 in \cite{ikawa2011} is placed in the subsection 
entitled ``When $G$ is simple and $\theta_1\theta_2=\theta_2\theta_1$ ",  
the lemma and its proof still hold without the assumption $\theta_1\theta_2=\theta_2\theta_1$.

(2) By (1) the natural action of $W(\tilde{\Sigma})$ on $\mathfrak{a}$ is irreducible (\cite[p.~53, Lemma B]{Humphreys}). 
Since $\mathrm{span}(W(\tilde{\Sigma})x_0)(\not=\{0\})$ is a $W(\tilde{\Sigma})$-invariant subspace of $\mathfrak{a}$, 
we have $\mathrm{span}(W(\tilde{\Sigma})x_0)=\mathfrak{a}$. 
Set $r=\dim\mathfrak{a}$. 
It is known that $\#(W(\tilde{\Sigma}))>r!$. 
Since $\mathrm{span}(W(\tilde{\Sigma})x_0)=\mathfrak{a}$, we have $\#(W(\tilde{\Sigma})x_0)\geq r$.
If  $\#(W(\tilde{\Sigma})x_0)=r$, then $W(\tilde{\Sigma})x_0=\{v_1,\ldots, v_r\}$ forms a basis of $\mathfrak{a}$. 
Since each element of $W(\tilde{\Sigma})$ is a permutation of $\{v_1,\ldots,v_r\}$, we have 
$\#(W(\tilde{\Sigma}))\leq r!$, which is a contradiction. 
Thus we get $\#(W(\tilde{\Sigma})x_0))\geq r+1$. 
\end{proof}

Base on the fact above, we call $\tilde{\Sigma}$ the {\it restricted root system} of $\mathfrak{g}$ with respect to $\mathfrak{a}$. 
The inequality $\#(W(\tilde{\Sigma})x_0)\geq \dim\mathfrak{a}+1$ in Lemma~\ref{lem:tildeSigma} is sharp in the sense 
that there is an  example where the equality holds. 

\begin{example}\label{lem:Ar}
In the case when 
\[
\tilde{\Sigma}=A_r=\{e_i-e_j\mid 1\leq i, j\leq r+1, i\not=j\},
\]
where $\mathfrak{a}=\{\sum_{i=1}^{r+1}x_ie_i\mid 
\sum_{i=1}^{r+1}x_i=0\}$ and $\langle e_i, e_j\rangle =\delta_{ij}$.  
Set $x_0=e_1+\cdots +e_r-re_{r+1}\in\mathfrak{a}-\{0\}$. 
Since 
\[
W(\tilde{\Sigma})x_0=
\{e_1+\cdots+\hat{e_i}+\cdots +e_{r+1}-re_i\mid 1\leq i\leq r+1\}
\]
we have $\#(W(\tilde{\Sigma})x_0))=r+1$.\hfill$\Box$

\end{example}

Take a maximal abelian subalgebra $\mathfrak{t}$ of $\mathfrak{g}$ containing $\mathfrak{a}$. 
Let $W(\mathfrak{t})$ denote the Weyl group of $\mathfrak{g}$ with respect to $\mathfrak{t}$. 
Then we have the following lemma. 

\begin{lemma}\label{lem:Mcapa}
$M\cap\mathfrak{a}=W(\mathfrak{t})x_0\cap \mathfrak{a}$.
\end{lemma}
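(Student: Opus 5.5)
We need to prove $M\cap\mathfrak{a}=W(\mathfrak{t})x_0\cap\mathfrak{a}$, where $M=\mathrm{Ad}(G)x_0$ and $\mathfrak{t}$ is a maximal abelian subalgebra of $\mathfrak{g}$ containing $\mathfrak{a}$. The plan is to reduce both sides to the classical fact that an adjoint orbit meets a maximal torus exactly in a Weyl group orbit. Recall that $W(\mathfrak{t})=N_G(\mathfrak{t})/Z_G(\mathfrak{t})$ acts on $\mathfrak{t}$ via $\mathrm{Ad}$.

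The inclusion $\supset$ is immediate. Every element of $W(\mathfrak{t})$ is represented by some $\mathrm{Ad}(n)|_{\mathfrak{t}}$ with $n\in N_G(\mathfrak{t})$. Hence $W(\mathfrak{t})x_0\subset \mathrm{Ad}(G)x_0=M$, and intersecting with $\mathfrak{a}$ gives $W(\mathfrak{t})x_0\cap\mathfrak{a}\subset M\cap\mathfrak{a}$.

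For $\subset$, take $y\in M\cap\mathfrak{a}$, so $y=\mathrm{Ad}(g)x_0$ for some $g\in G$. Since $\mathfrak{a}\subset\mathfrak{t}$, both $x_0$ and $y$ lie in $\mathfrak{t}$. We invoke the standard lemma for compact connected Lie groups: if $x,y\in\mathfrak{t}$ and $y=\mathrm{Ad}(g)x$, then $y=\mathrm{Ad}(n)x$ for some $n\in N_G(\mathfrak{t})$. The proof runs as follows.
\begin{enumerate}
\item[(i)] Let $Z=Z_G(y)_0$, the identity component of the centralizer of $y$.
\item[(ii)] $\mathfrak{t}$ is a maximal abelian subalgebra of $\mathfrak{z}_{\mathfrak{g}}(y)$, since it is abelian, contains $y$, and is maximal in $\mathfrak{g}$.
\item[(iii)] $\mathrm{Ad}(g)\mathfrak{t}$ is also contained in $\mathfrak{z}_{\mathfrak{g}}(y)$, because it is abelian and contains $\mathrm{Ad}(g)x_0=y$. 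It is maximal abelian in $\mathfrak{z}_{\mathfrak{g}}(y)$ as well.
\item[(iv)] By conjugacy of maximal abelian subalgebras (maximal tori) in the compact connected group $Z$, there is $h\in Z$ with $\mathrm{Ad}(h)\mathrm{Ad}(g)\mathfrak{t}=\mathfrak{t}$.
\item[(v)] Put $n=hg$. Then $n\in N_G(\mathfrak{t})$, and $\mathrm{Ad}(n)x_0=\mathrm{Ad}(h)y=y$.
\end{enumerate}
Therefore $y=w x_0$ with $w$ the class of $n$ in $W(\mathfrak{t})$, so $y\in W(\mathfrak{t})x_0\cap\mathfrak{a}$.

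The only substantive step is the conjugacy argument inside the centralizer. I expect no real obstacle, since it is standard (e.g.\ Helgason or Bourbaki) and does not use $\theta_1,\theta_2$ at all: the whole argument only needs $\mathfrak{a}\subset\mathfrak{t}$. If preferred, one can simply cite the well-known fact that $\mathrm{Ad}(G)x\cap\mathfrak{t}=W(\mathfrak{t})x$ for $x\in\mathfrak{t}$, and then intersect both sides with $\mathfrak{a}$.
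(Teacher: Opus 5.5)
Your proposal is correct and follows essentially the paper's route: the paper writes $M\cap\mathfrak{a}=(M\cap\mathfrak{t})\cap\mathfrak{a}$, cites Helgason (p.~285, Prop.~2.2) for $M\cap\mathfrak{t}=W(\mathfrak{t})x_0$, and then intersects with $\mathfrak{a}$. You do the same reduction, and in addition you correctly sketch the standard centralizer-conjugacy proof of that cited fact.
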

\begin{proof}
By using \cite[p.~285, Proposition 2.2]{Helgason}, we have
\[
M\cap\mathfrak{a}
=M\cap (\mathfrak{t}\cap\mathfrak{a})=(M\cap\mathfrak{t})\cap\mathfrak{a}=W(\mathfrak{t})x_0\cap \mathfrak{a}.
\]
\end{proof}

A subset ${\mathcal A}$ of $M(\subset\mathfrak{g})$ is called an {\it antipodal set} of $M$, if 
$\mathcal{A}$ is an abelian subset, that is, $[{\mathcal A},{\mathcal A}]=\{0\}$ holds. 
The definition was given in \cite{IIOST}. 
It is known by \cite[Theorem~2.2]{IIOST} that an antipodal set of $M$ is finite. 
An irreducible Hermitian symmetric space of compact type is a kind of complex flag manifold. 
When $M$ is an irreducible Hermitian symmetric space of compact type, the notion of an antipodal set 
coincides with that in the sense of Chen-Nagano (\cite{CN}), which was defined by using geodesic symmetries. 
This fact was proved by Tanaka-Tasaki \cite[Theorem~3.1]{TT}.

Now we are in the position to state a main result. 

\begin{theorem}\label{thm:int}
Let $L_1$ and $L_2$ be two real flag manifolds in a complex flag manifold $M$. 
Then the following four conditions are equivalent. 
\footnote{
If $\mathrm{span}(L_1\cap L_2)={\mathfrak p}_1\cap {\mathfrak p}_2$, then 
the conditions (3) and (4) are equivalent. 
But we do not know whether $\mathrm{span}(L_1\cap L_2)={\mathfrak p}_1\cap {\mathfrak p}_2$ holds or not. 

Since $\mathrm{Ad}(K_1\cap K_2)x_0$ is a subset of $L_1\cap L_2$, 
if $\mathrm{span}\{\mathrm{Ad}(K_1\cap K_2)x_0\}={\mathfrak p}_1\cap {\mathfrak p}_2$ then 
$\mathrm{span}(L_1\cap L_2)={\mathfrak p}_1\cap {\mathfrak p}_2$. 
The condition $\mathrm{span}\{\mathrm{Ad}(K_1\cap K_2)x_0\}={\mathfrak p}_1\cap {\mathfrak p}_2$ holds 
for any $x_0\in {\mathfrak p}_1\cap {\mathfrak p}_2-\{0\}$ is equivalent to the condition 
the compact symmetric pair $(G_{12},K_1\cap K_2)$ is irreducible.
But this condition does not hold in general. 
}
\begin{enumerate}
\item[(1)] The intersection $L_1\cap L_2$ of $L_1$ and $L_2$ is discrete. 
\item[(2)] $L_1\cap L_2$ is an antipodal set of $M$, that is, $[L_1\cap L_2,L_1\cap L_2]=\{0\}$. 
\item[(3)] $[{\mathfrak p}_1\cap {\mathfrak p}_2,L_1\cap L_2]=\{0\}$.
\item[(4)] ${\mathfrak p}_1\cap {\mathfrak p}_2$ is an abelian subspace. 
\end{enumerate}
In these cases, the discrete intersection $L_1\cap L_2$ and its linear span are given by 
\begin{align}
& \label{eqn:intersect}
L_1\cap L_2=M\cap \mathfrak{a}=W(\mathfrak{t})x_0\cap\mathfrak{a}
=W(\tilde{\Sigma})x_0,\\
& \label{eqn:lspan}
\mathrm{span}(L_1\cap L_2)=\mathfrak{a}={\mathfrak p}_1\cap {\mathfrak p}_2.
\end{align}
\end{theorem}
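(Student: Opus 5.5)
I will prove the chain (4)$\Rightarrow$(2)$\Rightarrow$(3)$\Rightarrow$(1)$\Rightarrow$(4), and establish \eqref{eqn:intersect} and \eqref{eqn:lspan} along the way in the case where (4) holds. Throughout I use \eqref{eqn:intersection}, i.e.\ $L_1\cap L_2=M\cap(\mathfrak{p}_1\cap\mathfrak{p}_2)$.

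\emph{(4)$\Rightarrow$(2).} This is immediate, since $L_1\cap L_2\subset\mathfrak{p}_1\cap\mathfrak{p}_2$ and the latter is abelian.

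\emph{(2)$\Rightarrow$(3).} Since $L_1\cap L_2$ is a totally geodesic submanifold of $M$ and is antipodal, it is finite by \cite[Theorem~2.2]{IIOST}, hence discrete. I then want to show that a discrete intersection forces $[\mathfrak{p}_1\cap\mathfrak{p}_2,y]=0$ for every $y\in L_1\cap L_2$. For $z\in\mathfrak{p}_1\cap\mathfrak{p}_2$ consider the curve $c(t)=\mathrm{Ad}(\exp tz)y$. Since $\theta_i(z)=-z$ and $\theta_i(y)=-y$, we get $\theta_i(c(t))=\mathrm{Ad}(\exp(-tz))(-y)$, so $c$ is not obviously contained in $\mathfrak{p}_i$; a direct argument along this curve fails. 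Instead I compute the tangent space. At $y$ one has $T_yM=[\mathfrak{g},y]$, and $T_y(L_1\cap L_2)=T_yM\cap\mathfrak{p}_1\cap\mathfrak{p}_2$ because the intersection is a totally geodesic submanifold equal to $M\cap(\mathfrak{p}_1\cap\mathfrak{p}_2)$ (clean intersection). Take $z\in\mathfrak{p}_1\cap\mathfrak{p}_2$ and put $w=[z,y]$. Then $\theta_i(w)=[-z,-y]=w$, so $w\in\mathfrak{k}_1\cap\mathfrak{k}_2$, and $[w,y]\in T_yM\cap\mathfrak{p}_1\cap\mathfrak{p}_2$. Discreteness gives $[[z,y],y]=0$, so $\mathrm{ad}(y)^2z=0$. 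Since $\mathrm{ad}(y)$ is skew-symmetric for the bi-invariant metric, $\mathrm{ad}(y)^2z=0$ forces $[y,z]=0$. This proves (1)$\Rightarrow$(3), and therefore (2)$\Rightarrow$(3) via finiteness.

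\emph{(3)$\Rightarrow$(4)}, which I regard as the main step. The set $L_1\cap L_2$ contains $M\cap\mathfrak{a}=W(\mathfrak{t})x_0\cap\mathfrak{a}$ by Lemma~\ref{lem:Mcapa}. This set contains $W(\tilde\Sigma)x_0$, since each reflection in $W(\tilde\Sigma)$ is realized by an element of $G$ normalizing $\mathfrak{a}$, so $W(\tilde\Sigma)x_0\subset M\cap\mathfrak{a}$. By Lemma~\ref{lem:tildeSigma}(2), $W(\tilde\Sigma)x_0$ spans $\mathfrak{a}$. Hence (3) gives $[\mathfrak{p}_1\cap\mathfrak{p}_2,\mathfrak{a}]=0$, and maximality of the abelian subspace $\mathfrak{a}$ in $\mathfrak{p}_1\cap\mathfrak{p}_2$ yields $\mathfrak{a}=\mathfrak{p}_1\cap\mathfrak{p}_2$, which is abelian. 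The same argument gives (1)$\Rightarrow$(4) through (3). The delicate point to verify is that reflections $s_\alpha$ for $\alpha\in\tilde\Sigma$ are induced by $\mathrm{Ad}(g)$ with $g\in G$. I would use an element of the form $\exp$ of a suitable $X+\bar X$ with $X\in\mathfrak{g}(\mathfrak{a},\alpha)$, following Matsuki's proof that $\tilde\Sigma$ is a root system.

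\emph{(4)$\Rightarrow$(1) and the formulas.} When (4) holds, $\mathfrak{p}_1\cap\mathfrak{p}_2=\mathfrak{a}$, so
\[
L_1\cap L_2=M\cap\mathfrak{a}=W(\mathfrak{t})x_0\cap\mathfrak{a},
\]
which is finite, hence discrete. It remains to show $W(\mathfrak{t})x_0\cap\mathfrak{a}\subset W(\tilde\Sigma)x_0$. Given $y=\mathrm{Ad}(g)x_0\in\mathfrak{a}$, both $x_0$ and $y$ lie in $\mathfrak{a}$ and are conjugate under $G$. I would apply the standard argument in the centralizer of $y$, replacing $g$ by $hg$ with $hg$ normalizing $\mathfrak{a}$, and then identify $N_G(\mathfrak{a})/Z_G(\mathfrak{a})$ acting on $\mathfrak{a}$ with $W(\tilde\Sigma)$. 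This is the second obstacle, and I would handle it using the fact that $\mathfrak{a}$ is maximal abelian in $\mathfrak{p}_1\cap\mathfrak{p}_2$ together with Matsuki's results. Finally, \eqref{eqn:lspan} follows from Lemma~\ref{lem:tildeSigma}(2).
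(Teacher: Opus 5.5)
\textbf{The gap.} The last equality of \eqref{eqn:intersect}, namely $W(\mathfrak{t})x_0\cap\mathfrak{a}\subset W(\tilde\Sigma)x_0$, is not established, and the route you sketch for it does not work.

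To replace $g$ by some $hg\in N_G(\mathfrak{a})$ by ``the standard argument in the centralizer of $y$'', you need $\mathfrak{a}$ and $\mathrm{Ad}(g)\mathfrak{a}$ to be maximal abelian in a common $Z_G(y)$-stable space that carries a conjugacy theorem. That fails here for three reasons. $\mathfrak{a}$ is not a maximal torus. It is in general maximal abelian in neither $\mathfrak{p}_1$ nor $\mathfrak{p}_2$. And $\mathrm{Ad}(g)\mathfrak{a}$ need not lie in $\mathfrak{p}_1\cap\mathfrak{p}_2$: you can take $g$ in $K_1$ or in $K_2$, but in general not in $K_1\cap K_2$. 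Running the argument with $\mathfrak{t}$ instead only gives back Lemma~\ref{lem:Mcapa}, with an element of $N_G(\mathfrak{t})$ that need not preserve $\mathfrak{a}$.

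Nor can you quote $N_G(\mathfrak{a})/Z_G(\mathfrak{a})=W(\tilde\Sigma)$. Only the inclusion of $W(\tilde\Sigma)$ is available. The reverse inclusion is a consequence of the equality you want, and even granting it you would still need the normalizing element.

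\textbf{The missing idea.} What is needed is the fundamental-domain comparison of Lemma~\ref{lem:B}. The set $M\cap\mathfrak{a}$ is $W(\tilde\Sigma)$-stable, so every $W(\tilde\Sigma)$-orbit in it meets the closed chamber $\mathfrak{a}_+$. If the orderings satisfy $\mathfrak{a}_+\subset\mathfrak{t}_+$, then $\#(M\cap\mathfrak{a}_+)\le\#(M\cap\mathfrak{t}_+)=1$, which forces $M\cap\mathfrak{a}=W(\tilde\Sigma)x_0$.

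The paper obtains $\mathfrak{a}_+\subset\mathfrak{t}_+$ by passing to a canonical triad (Theorem~\ref{thm:can}, Lemma~\ref{lem:okuda-baba}). In your non-canonical setting you can get it directly. Take a basis $H_0,H_1,\dots$ of $\mathfrak{t}$ whose first member $H_0\in\mathfrak{a}$ is $\tilde\Sigma$-regular, use the lexicographic order defined by pairing with this basis, and set $\tilde\Sigma^+=\{\lambda\in\tilde\Sigma\mid\langle\lambda,H_0\rangle>0\}$. Then every $\alpha\in\Delta^+$ has $\bar\alpha=0$ or $\bar\alpha\in\tilde\Sigma^+$. Hence $\langle\alpha,X\rangle=\langle\bar\alpha,X\rangle\ge 0$ for $X\in\mathfrak{a}_+$.

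\textbf{The rest.} The other steps are sound, and your route to (1)$\Rightarrow$(4) genuinely differs from the paper's. The paper reduces, via Lemma~\ref{lem:equiv} and $G=K_2AK_1$, to a canonical triad of finite order. It then builds an explicit circle in $\mathrm{Ad}(a)L_1\cap L_2$ from Ohno's decomposition \eqref{eqn:decomp} (Lemma~\ref{lem:finiteorder}).

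You instead get (1)$\Rightarrow$(3) from $\mathrm{ad}(y)^2z=0$. This is cleanest via the curve $\mathrm{Ad}(\exp t[z,y])y$, which stays in $L_1\cap L_2$ because $[z,y]\in\mathfrak{k}_1\cap\mathfrak{k}_2$. You then get (3)$\Rightarrow$(4) from $\mathrm{span}(W(\tilde\Sigma)x_0)=\mathfrak{a}$ and maximality of $\mathfrak{a}$. This needs only $\mathrm{span}(L_1\cap L_2)\supset\mathfrak{a}$, so it sidesteps the question raised in the footnote and uses no canonical forms.

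\textbf{One point you must supply.} You need that each $s_\alpha$, $\alpha\in\tilde\Sigma$, is induced by $\mathrm{Ad}(G)$ for an arbitrary triad. Lemma~\ref{lem:Weyl} is proved only when $\mathrm{ord}(\theta_1\theta_2)<\infty$. Moreover, for an arbitrary $X\in\mathfrak{g}(\mathfrak{a},\alpha)$ the element $[X,\bar X]$ need not lie in $\sqrt{-1}\mathfrak{a}$, so ``a suitable $X+\bar X$'' has to be chosen carefully:
\begin{itemize}
\item Take $X$ in an eigenspace of the orthogonal map $\theta_1\theta_2$ on $\mathfrak{g}(\mathfrak{a},\alpha)$.
\item Choose it fixed by $X\mapsto\overline{\theta_1X}$, which is a conjugate-linear involution of that eigenspace; then $X+\bar X\in\mathfrak{k}_1$.
\item The element $[X+\bar X,\sqrt{-1}(X-\bar X)]=-2\sqrt{-1}[X,\bar X]$ is $\theta_1\theta_2$-fixed and lies in $\mathfrak{p}_1\cap\mathfrak{g}(0)$, hence in $\mathfrak{g}(0)\cap\mathfrak{p}_1\cap\mathfrak{p}_2=\mathfrak{a}$.
\item So it is a positive multiple of $\alpha$, and $\mathrm{Ad}(\exp t(X+\bar X))$ induces $s_\alpha$ for suitable $t$.
\end{itemize}
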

\begin{remark}\rm 
As we mention below, it is clear that $(4)\Rightarrow (3)\Rightarrow (2)\Rightarrow (1)$. 
It was proved in \cite{IIOST} that (1) implies (3). 
It was also proved in \cite{IIOST} that (\ref{eqn:intersect}) holds when the commutativity condition 
$\theta_1\theta_2=\theta_2\theta_1$ holds. 

In this paper, we will show that (1) implies (4).
We will also prove that (\ref{eqn:intersect}) holds without the commutativity condition. 
\end{remark}

\begin{proof}[Outline of the proof]
It is clear that (4)$\Rightarrow$(3)$\Rightarrow$(2) since $L_i$ is a subset of ${\mathfrak p}_i$. 
Since an antipodal set of $M$ is finite, we get (2)$\Rightarrow$(1). 
When the condition (4) holds, we have by (\ref{eqn:intersection}) and Lemma~\ref{lem:Mcapa}
\[
L_1\cap L_2=M\cap\mathfrak{a}=W(\mathfrak{t})x_0\cap \mathfrak{a}.
\]
By using Lemma~\ref{lem:tildeSigma}, (2), the equation (\ref{eqn:lspan}) is obtained from (\ref{eqn:intersect}). 
Hence it is sufficient to prove the following: 

$\bullet$ The condition (1) implies the condition (4). 

$\bullet$ $W(\mathfrak{t})x_0\cap \mathfrak{a}=W(\tilde{\Sigma})x_0$ holds for any 
$x_0\in\mathfrak{a}$ regardless whether the intersection $L_1\cap L_2$ is discrete or not. 

We will show these in the next section. 
\end{proof}

\begin{corollary}\label{cor:discreteinteraection0}
Set $r=\dim\mathfrak{a}(\geq 1)$. 
Then the cardinality $\#(L_1\cap L_2)$ of the discrete intersection $L_1\cap L_2$ is greater than or equal to $r+1$.
In particular, $\#(W(\tilde{\Sigma})x_0))\geq 2$
\end{corollary}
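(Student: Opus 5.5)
The corollary follows directly from Theorem~\ref{thm:int} and Lemma~\ref{lem:tildeSigma}. I take Theorem~\ref{thm:int} as given in full, including the two facts whose proofs the outline defers to the next section. These are the implication (1)$\Rightarrow$(4) and the identity $W(\mathfrak{t})x_0\cap\mathfrak{a}=W(\tilde{\Sigma})x_0$.

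Assume $L_1\cap L_2$ is discrete. By the equivalence in Theorem~\ref{thm:int}, condition (4) holds, so ${\mathfrak p}_1\cap{\mathfrak p}_2$ is abelian. The maximal abelian subspace $\mathfrak{a}$ therefore equals ${\mathfrak p}_1\cap{\mathfrak p}_2$, which is (\ref{eqn:lspan}). Moreover (\ref{eqn:intersect}) gives
\[
L_1\cap L_2=W(\tilde{\Sigma})x_0, \qquad x_0\in\mathfrak{a}-\{0\}.
\]
Lemma~\ref{lem:tildeSigma} (2) applies to this $x_0$ and yields
\[
\#(L_1\cap L_2)=\#(W(\tilde{\Sigma})x_0)\geq \dim\mathfrak{a}+1=r+1.
\]

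For the final assertion, note that $r=\dim\mathfrak{a}\geq 1$, because $\mathfrak{a}$ contains the nonzero vector $x_0$. Hence $r+1\geq 2$.

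The second bound $\#(W(\tilde{\Sigma})x_0)\geq 2$ in fact holds for every $x_0\in\mathfrak{a}-\{0\}$, with no discreteness assumption, since Lemma~\ref{lem:tildeSigma} (2) is stated for arbitrary such $x_0$. This can also be seen directly. The roots of $\tilde{\Sigma}$ span $\mathfrak{a}$ by Lemma~\ref{lem:tildeSigma} (1), so some $\alpha\in\tilde{\Sigma}$ satisfies $\langle\alpha,x_0\rangle\neq 0$. For that root the reflection $s_\alpha$ moves $x_0$, giving two distinct points $x_0\neq s_\alpha x_0$ in the orbit.

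There is no real obstacle here. All the substance lies in Theorem~\ref{thm:int}, namely (1)$\Rightarrow$(4) and the Weyl group identity, which the paper proves later. The only point needing care is to use (4) to identify $\mathfrak{a}$ with ${\mathfrak p}_1\cap{\mathfrak p}_2$, so that $r$ really is the dimension of the span of the intersection.
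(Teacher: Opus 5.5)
Your proposal is correct and follows the paper's own argument: the paper's proof is the single chain $\#(L_1\cap L_2)=\#(W(\tilde{\Sigma})x_0)\geq r+1\geq 2$, citing (\ref{eqn:intersect}) and Lemma~\ref{lem:tildeSigma}~(2), which is exactly your main line. Your direct reflection argument for $\#(W(\tilde{\Sigma})x_0)\geq 2$ and your remark that $\mathfrak{a}={\mathfrak p}_1\cap{\mathfrak p}_2$ are valid but not needed, since $r$ is defined as $\dim\mathfrak{a}$.
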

\begin{proof}
We have 
$\#(L_1\cap L_2)=\#(W(\tilde{\Sigma})x_0)\geq r+1\geq 2$ by Lemma ~\ref{lem:tildeSigma}, (2) and (\ref{eqn:intersect}).
\end{proof}

The inequality $\#(L_1\cap L_2)\geq r+1$ in Corollary~\ref{cor:discreteinteraection0} is sharp in the sense 
that there are some  examples where the equality holds. 
Since there are some compact symmetric triads $(G,\theta_1,\theta_2)$ such that $\tilde{\Sigma}=A_r$, we have 
$\#(L_1\cap L_2)=r+1$ in this case if we take $x_0$ as in Example~\ref{lem:Ar}.

\begin{example}\rm
In our setting, there are several examples $(G,\theta_1,\theta_2)$ which satisfy $\tilde{\Sigma}=A_r\ (r \geq 2)$. 
The simplest examples are in the case when two real flag manifolds $L_1$ and $L_2$ are congruent with each other. 
For a compact symmetric pair $(G, K)$, let $\theta$ denote the involution of $G$ that determines $K$. 
We assume that its restricted root system $\tilde{\Sigma}$ is of type $A_r$. 
If $G$ is simple,  $(G,K) = (SU(r+1), SO(r+1)), (SU(2(r+1)), Sp(r+1))$, and $(E_6, F_4)$ are such examples. 
Here $r=2$ in the case when $(G,K) =(E_6, F_4)$ (see \cite{Helgason}).
In each example, the pair of orbits $(M, L) = (\mathrm{Ad}(G)x_0, \mathrm{Ad}(K)x_0)$ for $G$ and $K$ passing through $x_0$ as in Example~\ref{lem:Ar},
is a pair of a Hermitian symmetric space and a symmetric R-space, and specifically takes the following forms:
\[
(M, L)=(\mathbb{C}P^{r}, \mathbb{R}P^{r}), (Gr_{2}(\mathbb{C}^{2(r+1)}), \mathbb{H}P^{r}), 
(E_6/(Spin(10)\times_{\mathbb{Z}_{4}} U(1)),\mathbb{O}P^{2}).
\]
In fact,  when $(G,K) = (SU(r+1), SO(r+1))$, we can take 
$$x_{0}=\sqrt{-1}diag(1,\dotsc, 1,-r).$$
Then we have $\mathrm{Ad}(G)x_0\cong SU(r+1)/S(U(r)\times U(1)) =\mathbb{C}P^r$ and 
$\mathrm{Ad}(K)x_0\cong  SO(r+1)/S(O(r)\times O(1))=\mathbb{R}P^r$.

When $(G,K)=(SU(2(r+1)),Sp(r+1))$, we can take 
$$x_{0}=\sqrt{-1}diag(1,\dotsc,1,-r, 1,\dotsc,1,-r).$$
Then we have $\mathrm{Ad}(G)x_0\cong SU(2(r+1))/S(U(2r)\times U(2)) =Gr_2(\mathbb{C}^{2(r+1)})$
and 
$\mathrm{Ad}(K)x_0 \cong Sp(r+1)/(Sp(r)\times Sp(1)) = \mathbb{H}P^r.$
For the exceptional pair $(E_6, F_4)$,   there is  a unique pair of a Hermitian symmetric space $M$ and a  symmetric R-space $L$ as orbits.
Set $\theta_1=\theta$. 
For $a\in A=\exp\mathfrak{a}$, set $\theta_2=\tau_a\theta \tau_a^{-1}$, 
where $\tau_a$ is an inner automorphism of $G$ defined by $a\in A$. 
If we take $a\in A$ such that $L_1\cap L_2$ is discrete, we have $\#(L_1\cap L_2)=r+1$.

Furthermore, if we consider the case when $(G, K_1, K_2) = (SU(2(r+1)), Sp(r+1), SO(2(r+1)))$ and $(E_6, F_4, Sp(4))$, 
then the constructed two real flag manifolds that are not congruent with each other.
Here $\tilde{\Sigma}$ is of type $A_2$ in the case when $(G,K_1,K_2)=(E_6, F_4, Sp(4))$ (see \cite{Matsuki2}).
In this case, the triple of orbits $(M, L_1, L_2)$ of $G$ and $K_1, K_2$ passing through $x_0$ is as follows. 
\begin{align*}
(M, L_{1}, L_{2})=&(Gr_{2}(\mathbb{C}^{2(r+1)}), \mathbb{H}P^{r}, Gr_{2}(\mathbb{R}^{2(r+1)}) ), \\
&(E_6/(Spin(10)\times_{\mathbb{Z}_{4}} U(1)), \mathbb{O}P^{2}, Gr_{2}(\mathbb{H}^{4})).
\end{align*}
In fact, when $(G, K_1, K_2) = (SU(2(r+1)), Sp(r+1), SO(2(r+1)))$, 
we can take 
$$x_{0}=\sqrt{-1}diag(1,\dotsc,1,-r, 1,\dotsc,1,-r).$$
Then we have $\mathrm{Ad}(G)x_0\cong SU(2(r+1))/S(U(2r)\times U(2)) =Gr_2(\mathbb{C}^{2(r+1)})$,
$\mathrm{Ad}(K_1)x_0 \cong Sp(r+1)/(Sp(r)\times Sp(1)) = \mathbb{H}P^r$ and
$\mathrm{Ad}(K_2)x_0 \cong SO(2(r+1))/S(O(2r)\times O(2)) =Gr_{2}(\mathbb{R}^{2(r+1)}).$
When $(G, K_1, K_2) = (E_6, F_4, Sp(4))$, we also have $(M, L_{1}, L_{2})=$\linebreak$(E_6/(Spin(10)\times_{\mathbb{Z}_{4}} U(1)), \mathbb{O}P^{2}, Gr_{2}(\mathbb{H}^{4}))$
from the classification.
For $a\in A=\exp\mathfrak{a}$, set $\theta_2'=\tau_a\theta_2 \tau_a^{-1}$. 
If we take $a\in A$ such that $L_1\cap L_2'$ is discrete, we have $\#(L_1\cap L_2')=r+1$. \hfill$\Box$
\end{example}

\section{Proof of Theorem~\ref{thm:int}}

\subsection{Outline of the proof}

In order to prove Theorem~\ref{thm:int}, it is sufficient to prove the following lemma. 

\begin{lemma}\label{lem:intersect}
The following holds. 
\begin{enumerate}
\item[(1)] 
If ${\mathfrak p}_1\cap {\mathfrak p}_2$ is not abelian, then 
$L_1\cap L_2$ is not discrete. 
\item[(2)] $W(\mathfrak{t})x_0\cap \mathfrak{a}=W(\tilde{\Sigma})x_0$ holds for any $x_0\in\mathfrak{a}$. 
\end{enumerate}
\end{lemma}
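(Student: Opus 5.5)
The plan is to prove (2) first and then use it to prove (1).

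\emph{Proof of (2).} For the inclusion $W(\tilde{\Sigma})x_0\subset W(\mathfrak{t})x_0\cap\mathfrak{a}$, it suffices to realize each reflection $s_\alpha$ ($\alpha\in\tilde{\Sigma}$) on $\mathfrak{a}$ by some $\mathrm{Ad}(g)$ with $g\in G$. Then $W(\tilde{\Sigma})x_0\subset M\cap\mathfrak{a}=W(\mathfrak{t})x_0\cap\mathfrak{a}$ by Lemma~\ref{lem:Mcapa}. To build $g$, take $0\neq X\in\mathfrak{g}(\mathfrak{a},\alpha)$ and put $Y=X+\bar X\in\mathfrak{g}$. The usual rank-one computation shows that $\mathrm{Ad}(\exp tY)$ fixes $\ker\alpha\cap\mathfrak{a}$ and, for a suitable $t$, acts as $-1$ on $\mathbb{R}\alpha$; this is the standard argument for restricted roots, as in \cite{Matsuki}.

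For the reverse inclusion I use chambers. The restrictions to $\mathfrak{a}$ of the roots of $(\mathfrak{g}^{\mathbb C},\mathfrak{t}^{\mathbb C})$ are exactly the elements of $\tilde{\Sigma}\cup\{0\}$. Let $y\in W(\mathfrak{t})x_0\cap\mathfrak{a}$. Applying elements of $W(\tilde{\Sigma})$, I may assume that $x_0$ and $y$ both lie in the closure $\bar C$ of one Weyl chamber $C$ of $\tilde{\Sigma}$. Choose $h\in C$ and perturb it in $\mathfrak{t}$ to a regular element $h'$. This gives a positive system $\Delta^+$ of $\mathfrak{t}$-roots such that every $\beta\in\Delta^+$ with $\beta|_{\mathfrak{a}}\neq0$ has $\beta|_{\mathfrak{a}}$ positive on $C$. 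For $\beta\in\Delta^+$ we then get $\langle\beta,x_0\rangle=\langle\beta|_{\mathfrak{a}},x_0\rangle\ge0$, and similarly for $y$. So $x_0$ and $y$ both lie in the closed dominant chamber of $W(\mathfrak{t})$. Since they are in the same $W(\mathfrak{t})$-orbit, $y=x_0$. This proves (2).

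\emph{Proof of (1).} Write $\mathfrak{p}_{12}=\mathfrak{p}_1\cap\mathfrak{p}_2$. Each $\mathfrak{p}_i$ is a Lie triple system, so $\mathfrak{p}_{12}$ is one as well. Hence $\mathfrak{h}=[\mathfrak{p}_{12},\mathfrak{p}_{12}]\subset\mathfrak{k}_1\cap\mathfrak{k}_2$, and $H=\exp\mathfrak{h}\subset K_1\cap K_2$ preserves $M$, $\mathfrak{p}_1$ and $\mathfrak{p}_2$. By (\ref{eqn:intersection}), $\mathrm{Ad}(H)y\subset L_1\cap L_2$ for every $y\in L_1\cap L_2$.

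Decompose $\mathfrak{p}_{12}$ with respect to $\mathfrak{a}$. Every nonzero weight $\lambda$ of $\mathrm{ad}(\mathfrak{a})$ on $\mathfrak{p}_{12}^{\mathbb C}$ lies in $\tilde{\Sigma}$. If $\mathfrak{p}_{12}$ is not abelian, then such a $\lambda$ exists with a nonzero vector $Z\in\mathfrak{p}_{12}$ in the corresponding real root space. For $y\in\mathfrak{a}$ we have $[[Z,H_\lambda'],y]\neq0$ for suitable elements whenever $\langle\lambda,y\rangle\neq0$; concretely, $[Z,[Z,y]]\neq0$ when $\lambda(y)\neq0$. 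In that case $\mathrm{Ad}(H)y$ is a positive-dimensional orbit inside $L_1\cap L_2$, so $L_1\cap L_2$ is not discrete.

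It remains to find $y\in L_1\cap L_2\cap\mathfrak{a}$ with $\langle\lambda,y\rangle\neq0$. By (2) and the first inclusion, $W(\tilde{\Sigma})x_0\subset M\cap\mathfrak{a}\subset L_1\cap L_2$. By Lemma~\ref{lem:tildeSigma}\,(2) this orbit spans $\mathfrak{a}$, so it cannot lie in $\ker\lambda$.

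The main obstacle I expect is the step in (2) that passes from the $\mathfrak{t}$-roots to $\tilde{\Sigma}$. One must check carefully that every $\mathfrak{t}$-root restricts to an element of $\tilde{\Sigma}\cup\{0\}$, and that the perturbed regular element $h'$ can be chosen compatibly. This uses only that $\mathfrak{a}\subset\mathfrak{t}$ and the definition (\ref{eqn:compg}), with no commutativity of $\theta_1,\theta_2$. A secondary technical point in (1) is checking that the relevant bracket is nonzero, i.e.\ that $\mathrm{ad}(\mathfrak{h})y\neq0$; this reduces to the $\mathfrak{su}(2)$-type computation in the root space of $\lambda$.
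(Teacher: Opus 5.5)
\textbf{Comparison with the paper.} Your route is genuinely different. The paper first reduces to a canonical triad with $\mathrm{ord}(\theta_1\theta_2)<\infty$, via Matsuki equivalence, Hermann's decomposition $G=K_2AK_1$ and Theorem~\ref{thm:can}. It then gets $\mathfrak{a}_+\subset\mathfrak{t}_+$ from Lemma~\ref{lem:okuda-baba}, and the circle of Lemma~\ref{lem:finiteorder} from Ohno's decomposition (Lemmas~\ref{lem:3}--\ref{fact:2}).

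Your chamber argument gives $\bar C\subset\mathfrak{t}_+$ directly, for an arbitrary triad, once the positive system of $\Delta$ is chosen compatibly with a chamber of $\tilde\Sigma$ (a lexicographic order starting with a basis of $\mathfrak{a}$ does it). After that the count is exactly the one in Lemma~\ref{lem:B}, and this step is fine.

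Letting the analytic subgroup of $\mathfrak{h}=[\mathfrak{p}_1\cap\mathfrak{p}_2,\mathfrak{p}_1\cap\mathfrak{p}_2]\subset\mathfrak{k}_1\cap\mathfrak{k}_2$ act on $L_1\cap L_2$ is likewise a sound substitute for Ohno's theory. It uses only the restricted roots of the symmetric pair $(\mathfrak{g}_{12},\mathfrak{k}_1\cap\mathfrak{k}_2)$. Two small corrections are needed there:
\begin{enumerate}
\item[(a)] $\mathrm{ad}(\mathfrak{a})$ maps $\mathfrak{p}_1\cap\mathfrak{p}_2$ into $\mathfrak{k}_1\cap\mathfrak{k}_2$, not into itself. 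So the root spaces $\mathfrak{p}_\lambda$ must be defined through $\mathrm{ad}(\mathfrak{a})^2$, or inside $\mathfrak{g}_{12}$.
\item[(b)] The bracket to check is $[[Z,y],y]$, not $[Z,[Z,y]]$. For $U=[Z,y]\in\mathfrak{h}$ one has $\langle[U,y],Z\rangle=-\|U\|^2$ and $\|U\|=|\langle\lambda,y\rangle|\,\|Z\|$.
\end{enumerate}

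\textbf{The gap: realizing the reflections.} Everything else rests on this step: the first inclusion in (2), moving $x_0$ and $y$ into $\bar C$, and the choice of $y$ in (1). For an arbitrary $0\ne X\in\mathfrak{g}(\mathfrak{a},\alpha)$ the rank-one computation fails. The reason is that $[Y,\sqrt{-1}(X-\bar X)]=-2\sqrt{-1}[X,\bar X]$ lies only in $\mathfrak{g}(0)$, which is in general much larger than $\mathfrak{a}^{\mathbb C}$.

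Concretely, let $X=E_\beta$ be a $\mathfrak{t}$-root vector with $\bar\beta=\alpha$ but $\beta\notin\mathfrak{a}$; for example $\beta=\alpha_1$, $\alpha=\lambda_1$ in Section~\ref{sec:order3}. Then $Y$ lies in the $\mathfrak{su}(2)$ attached to $\beta$. So the orthogonal map $\mathrm{Ad}(\exp tY)$ fixes $\{H\in\mathfrak{t}\mid\langle\beta,H\rangle=0\}$ pointwise. It therefore preserves the projection of $\alpha$ onto that space, which is nonzero, and so $\mathrm{Ad}(\exp tY)\alpha\ne-\alpha$ for every $t$.

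The paper's Lemma~\ref{lem:Weyl} avoids this by using Ohno's basis, where $[X^\epsilon_{\alpha,i},Y^\epsilon_{\alpha,i}]=\alpha$. But it is proved only under $\mathrm{ord}(\theta_1\theta_2)<\infty$, so you cannot quote it while bypassing the reduction.

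\textbf{The repair} uses both involutions.
\begin{enumerate}
\item[(i)] $\theta_1\theta_2$ is unitary on $\mathfrak{g}(\mathfrak{a},\alpha)$, so some eigenspace $\mathfrak{g}(\mathfrak{a},\alpha,\epsilon)$ is nonzero. The map $X\mapsto\theta_1\bar X$ is an antilinear involution of this eigenspace, so you can take $X\ne0$ with $\theta_1X=\bar X$.
\item[(ii)] Then $[X,\bar X]$ is fixed by $\theta_1\theta_2$ and negated by $\theta_1$, hence also negated by $\theta_2$.
\item[(iii)] By maximality of $\mathfrak{a}$, this gives $[X,\bar X]\in\mathfrak{g}(0)\cap(\mathfrak{p}_1\cap\mathfrak{p}_2)^{\mathbb C}=\mathfrak{a}^{\mathbb C}$. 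Pairing with $\mathfrak{a}$ then yields $[X,\bar X]=\sqrt{-1}\,c\,\alpha$ with $c>0$.
\item[(iv)] Now $\mathrm{Ad}(\exp tY)$ rotates the plane $\mathbb{R}\alpha\oplus\mathbb{R}\sqrt{-1}(X-\bar X)$ and fixes $\{H\in\mathfrak{a}\mid\langle\alpha,H\rangle=0\}$. For suitable $t$ it equals $s_\alpha$ on $\mathfrak{a}$.
\end{enumerate}

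With this fix your proof is complete. It needs neither canonical forms nor finiteness of $\mathrm{ord}(\theta_1\theta_2)$. What the paper's route buys instead is the explicit formula (\ref{eqn:decomp}) for $\mathrm{Ad}(\exp H)\mathfrak{p}_1\cap\mathfrak{p}_2$, which drives Theorem~\ref{thm:discrete} and the cell analysis of Section~\ref{sec:main2}.
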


After some preparations,  we will show the lemma above.
The proof is given in Subsection~\ref{subsec:proof}.  

A triple $(G,\theta_1,\theta_2)$ of a compact connected simple Lie group $G$ and its two involutions $\theta_1$ and $\theta_2$ 
is called a {\it compact symmetric triad}. 
T.~Matsuki (\cite{Matsuki2}) defined an equivalence relation on the set $\{(G,\theta_1,\theta_2)\}$ of compact symmetric 
triads as follows: 

\begin{definition}\rm \label{dfn:Matsuki}
Two compact symmetric triads $(G,\theta_1,\theta_2)$ and $(G,\theta_1',\theta_2')$ are 
{\it equivalent}, if there exist an automorphism $\varphi \in \mathrm{Aut}(G)$ and an inner automorphism $\tau$
such that 
\begin{equation}
\label{eqn:equiv}
\theta_1'=\tau\varphi\theta_1\varphi^{-1}\tau^{-1},\quad 
\theta_2'=\varphi\theta_2 \varphi^{-1}.
\end{equation}
In this case, we write $(G,\theta_1,\theta_2)\sim (G,\theta_1',\theta_2')$. 
\end{definition}

Under the condition $(G,\theta_1,\theta_2)\sim (G,\theta_1',\theta_2')$,  
we use a primed symbol to denote the object associated to $(G,\theta_1',\theta_2')$ 
corresponding to an object associated to $(G,\theta_1,\theta_2)$. 
For instance, ${\mathfrak p}_i'=F(-\theta_i',\mathfrak{g})$ since we put ${\mathfrak p}_i=F(-\theta_i,\mathfrak{g})$. 
Then we have
\begin{align}
& F(\theta_1',G)=\tau\varphi(F(\theta_1,G)),\quad 
{\mathfrak p}_1'=\tau\varphi({\mathfrak p}_1),\label{eqn:basic0}\\
& F(\theta_2',G)=\varphi (F(\theta_2,G)),\quad 
{\mathfrak p}_2'=\varphi ({\mathfrak p}_2). \label{eqn:basic}
\end{align}

We assume that the complex flag manifold $M = \mathrm{Ad}(G) x_0$ and two real flag manifolds 
$L_i := \mathrm{Ad}(K_i) x_0$ are constructed from the compact symmetric triad $(G, \theta_1, \theta_2)$. 
Under the condition (\ref{eqn:equiv}), we define a complex flag manifold $M'$ by
\[
M'=\mathrm{Ad}(G)\varphi(x_0), 
\]
which is diffeomorphic to $M$. 
Since $\tau$ is an inner automorphism of $G$, 
we have $M'=\mathrm{Ad}(G)\tau \varphi(x_0)$, where we use the same symbol $\tau$ to denote 
the differential of $\tau$. 
Define two real flag manifolds $L_1'$ and $L_2'$ of $M'$ by 
\begin{align}
& L_1'=\mathrm{Ad}(F(\theta_1',G))\tau\varphi(x_0)=\tau\varphi (L_1),\label{eqn:L1}\\
& L_2'=\mathrm{Ad}(F(\theta_2',G))\varphi(x_0)=\varphi(L_2).\label{eqn:L2}
\end{align}
where the differential map of $\varphi$ is also denoted by the same symbol $\varphi$.
The following lemma follows from (\ref{eqn:basic0})$\sim$ (\ref{eqn:L2}). 

\begin{lemma}\label{lem:equiv} 
We retain the notations above. 
\begin{enumerate}
\item[(1)] When $\tau$ is identity in (\ref{eqn:equiv}), then we have
\[\hspace{15mm}
L_i'=\varphi(L_i),\quad 
L_1'\cap L_2'=\varphi(L_1\cap L_2), \quad 
{\mathfrak p}_1'\cap {\mathfrak p}_2'=\varphi({\mathfrak p}_1\cap {\mathfrak p}_2).
\]
The condition $L_1\cap L_2=W(\tilde{\Sigma})x_0$ holds if and only if 
$L_1'\cap L_2'=W(\varphi(\tilde{\Sigma}))\varphi(x_0)$ holds, 
where ${\mathfrak a}':=\varphi(\mathfrak{a})$ is a maximal abelian subspace of ${\mathfrak p}_1'\cap {\mathfrak p}_2'$, and 
$\varphi(\tilde{\Sigma})$ is a restricted root system of $\mathfrak{g}$ with respect to 
$\mathfrak{a}'$. 
\item[(2)] When $\varphi=1$ and $\tau=\tau_{k_2ak_1}\;(k_i\in K_i, a\in A:=\exp\mathfrak{a})$ in (\ref{eqn:equiv}), we have 
\begin{align*}
& L_1'=\mathrm{Ad}(k_2)\mathrm{Ad}(a)L_1,\quad L_2'=L_2,\\ 
& L_1'\cap L_2'=\mathrm{Ad}(k_2)(\mathrm{Ad}(a) L_1\cap L_2),\\
& \mathfrak{p}_1'\cap \mathfrak{p}_2'
=\mathrm{Ad}(k_2)(\mathrm{Ad}(a){\mathfrak p}_1\cap {\mathfrak p}_2),
\end{align*}
where $\mathfrak{a}$ is also a maximal abelian subspace of $\mathrm{Ad}(a) \mathfrak{p}_1 \cap \mathfrak{p}_2$.
The condition $\mathrm{Ad}(a)L_1\cap L_2=W(\tilde{\Sigma})x_0$ holds if and only if the condition 
$L_1'\cap L_2'=$\linebreak$\mathrm{Ad}(k_2)(W(\tilde{\Sigma})x_0)$ holds. 
Moreover, ${\mathfrak a}':=\mathrm{Ad}(k_2)\mathfrak{a}$ is a maximal abelian subspace of 
$\mathfrak{p}_1'\cap \mathfrak{p}_2'$. 
\end{enumerate}
\end{lemma}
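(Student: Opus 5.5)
The plan is to verify each identity directly from the transformation rules (\ref{eqn:basic0})--(\ref{eqn:L2}), using that automorphisms of $G$ act on $\mathfrak{g}$ by Lie algebra automorphisms preserving the bi-invariant metric. Since $G$ is simple, the metric is a negative multiple of the Killing form, so every automorphism is an isometry of $\mathfrak{g}$.

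For (1), with $\tau=1$ we have $L_i'=\varphi(L_i)$ by (\ref{eqn:L1}), (\ref{eqn:L2}), and $\mathfrak{p}_i'=\varphi(\mathfrak{p}_i)$ by (\ref{eqn:basic0}), (\ref{eqn:basic}). Since $\varphi$ is a bijection of $\mathfrak{g}$, it commutes with intersections, which gives the intersection formulas. Since $\varphi$ is a Lie algebra automorphism, $\varphi(\mathfrak{a})$ is abelian, and it is maximal in $\mathfrak{p}_1'\cap\mathfrak{p}_2'$ by applying $\varphi^{-1}$.

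For the root system, I extend $\varphi$ complex-linearly to $\mathfrak{g}^{\mathbb C}$ and use the isometry property $\langle\varphi\alpha,\varphi H\rangle=\langle\alpha,H\rangle$. Then (\ref{eqn:compg}) gives $\varphi(\mathfrak{g}(\mathfrak{a},\alpha))=\mathfrak{g}(\mathfrak{a}',\varphi\alpha)$, so the restricted root system of $\mathfrak{g}$ with respect to $\mathfrak{a}'$ is $\varphi(\tilde\Sigma)$. Because $\varphi|_{\mathfrak a}$ is an isometry onto $\mathfrak{a}'$, it conjugates reflections: $\varphi s_\alpha\varphi^{-1}=s_{\varphi\alpha}$. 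Hence $W(\varphi(\tilde\Sigma))\varphi(x_0)=\varphi(W(\tilde\Sigma)x_0)$. Applying the bijection $\varphi$ to $L_1\cap L_2=W(\tilde\Sigma)x_0$ gives the stated equivalence; the converse follows by applying $\varphi^{-1}$.

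For (2), write $g=k_2ak_1$, so $\tau=\tau_g$ and its differential is $\mathrm{Ad}(g)$. Because $k_1\in K_1$ normalizes $\mathfrak{p}_1$ and preserves $L_1$, we get
\[
\mathfrak{p}_1'=\mathrm{Ad}(k_2)\mathrm{Ad}(a)\mathfrak{p}_1,\qquad L_1'=\mathrm{Ad}(k_2)\mathrm{Ad}(a)L_1 .
\]
For $L_1'$ I take the paper's definition (\ref{eqn:L1}), $L_1'=\tau\varphi(L_1)=\mathrm{Ad}(g)L_1$, which avoids the basepoint question. The equalities $\mathfrak{p}_2'=\mathfrak{p}_2$ and $L_2'=L_2$ hold since $\varphi=1$. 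As $\mathrm{Ad}(k_2)$ preserves $\mathfrak{p}_2$ and $L_2$, we have
\[
\mathrm{Ad}(k_2)\mathrm{Ad}(a)\mathfrak{p}_1\cap\mathfrak{p}_2=\mathrm{Ad}(k_2)\bigl(\mathrm{Ad}(a)\mathfrak{p}_1\cap\mathfrak{p}_2\bigr),
\]
and the same with $L$'s. Since $a=\exp H$ with $H\in\mathfrak{a}$ and $\mathfrak{a}$ is abelian, $\mathrm{Ad}(a)$ fixes $\mathfrak{a}$ pointwise. Hence $\mathfrak{a}=\mathrm{Ad}(a)\mathfrak{a}\subset\mathrm{Ad}(a)\mathfrak{p}_1\cap\mathfrak{p}_2$, and $\mathfrak{a}\subset\mathfrak{p}_2$ already.

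The step needing the most care is maximality of $\mathfrak{a}$ in $\mathrm{Ad}(a)\mathfrak{p}_1\cap\mathfrak{p}_2$, because this space may be larger than $\mathfrak{p}_1\cap\mathfrak{p}_2$. I would use the decomposition of $\mathfrak{g}$ into the $\theta_1\theta_2$-invariant joint eigenspaces of $\mathrm{ad}(\mathfrak{a})$, i.e.\ the real parts of $\mathfrak{g}(\mathfrak{a},\alpha)+\mathfrak{g}(\mathfrak{a},-\alpha)$. The elements of $\mathrm{Ad}(a)\mathfrak{p}_1\cap\mathfrak{p}_2$ commuting with $\mathfrak{a}$ lie in the centralizer component $\mathfrak{g}(\mathfrak{a},0)\cap\mathfrak{g}$, on which $\mathrm{Ad}(a)$ acts trivially. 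So these elements lie in $\mathfrak{p}_1\cap\mathfrak{p}_2\cap\mathfrak{z}(\mathfrak{a})=\mathfrak{a}$, where the last equality is the maximality of $\mathfrak{a}$ in $\mathfrak{p}_1\cap\mathfrak{p}_2$; this proves maximality.

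Finally, the restricted root system of $\mathfrak{g}$ with respect to $\mathfrak{a}$ depends only on $\mathfrak{a}$, so it is still $\tilde\Sigma$. Applying the isometry $\mathrm{Ad}(k_2)$, exactly as in part (1), gives the equivalence of $\mathrm{Ad}(a)L_1\cap L_2=W(\tilde\Sigma)x_0$ and $L_1'\cap L_2'=\mathrm{Ad}(k_2)(W(\tilde\Sigma)x_0)$, and shows that $\mathrm{Ad}(k_2)\mathfrak{a}$ is maximal abelian in $\mathfrak{p}_1'\cap\mathfrak{p}_2'$.
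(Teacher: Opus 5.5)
Your proposal is correct and is the same direct verification from (\ref{eqn:basic0})--(\ref{eqn:L2}) that the paper invokes. You also make explicit the isometry of automorphisms, which is needed for $\varphi(\tilde\Sigma)$ to be the restricted root system with respect to $\mathfrak{a}'$, and your centralizer argument correctly supplies the maximality of $\mathfrak{a}$ in $\mathrm{Ad}(a)\mathfrak{p}_1\cap\mathfrak{p}_2$, which the paper leaves implicit: $\mathrm{Ad}(a)$ fixes the centralizer of $\mathfrak{a}$ pointwise, so an element of $\mathrm{Ad}(a)\mathfrak{p}_1\cap\mathfrak{p}_2$ commuting with $\mathfrak{a}$ already lies in $\mathfrak{p}_1\cap\mathfrak{p}_2$, hence in $\mathfrak{a}$ (this needs neither the eigenspace decomposition you mention nor finiteness of $\mathrm{ord}(\theta_1\theta_2)$).
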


We have $G=K_2AK_1$ by a theorem of Hermann (\cite[p.~293, Theorem~2.2]{Hermann}) . 
By Lemma~\ref{lem:equiv}, in order to prove Lemma~\ref{lem:intersect}, it is sufficient to show that 
there is a representative $(G,\theta_1',\theta_2')$ in each equivalent class $[(G,\theta_1,\theta_2)]$ 
such that 
\begin{enumerate}
\item[(i)] When $a$ runs through $A$, if $\mathrm{Ad}(a){\mathfrak p}_1\cap {\mathfrak p}_2$ is not abelian, then
$\mathrm{Ad}(a)L_1\cap L_2$ is not discrete. 
\item[(ii)] 
$W(\mathfrak{t})x_0\cap\mathfrak{a}=W(\tilde{\Sigma})x_0$ holds for any $x_0\in\mathfrak{a}$.
\end{enumerate}

In Subsection~\ref{subs:cst}, we will show that there exists a representative $(G,\theta_1,\theta_2)$ which has 
good properties in each equivalent class $[(G,\theta_1,\theta_2)]$ (Theorem~\ref{thm:can}). 
The assertions (i) and (ii) above will be shown for such a $(G,\theta_1,\theta_2)$ 
in Subsection 3.3 (see Lemmas~\ref{lem:finiteorder} and \ref{lem:B}).

\subsection{Compact symmetric triads}
\label{subs:cst}

In this subsection we prepare the foundations of compact symmetric triads, which will be used later. 

We retain the notation in the previous section. 
Let $(G,\theta_1,\theta_2)$ be a compact symmetric triad, where $G$ is a compact connected simple Lie group. 
Take a maximal abelian subspace $\mathfrak{a}$ of ${\mathfrak p}_1\cap {\mathfrak p}_2$. 
We call $H\in\mathfrak{a}$ a {\it regular point} of $(G,\theta_1,\theta_2)$ with respect to $\mathfrak{a}$, if 
$(\mathrm{Ad}(\exp H){\mathfrak p}_1)\cap {\mathfrak p}_2=\mathfrak{a}$ holds. 
Let ${\mathfrak a}_r$ denote the set of regular points. 
A connected component of $\mathfrak{a}_r$ is called a {\it cell}. 
See Theorem~\ref{thm:discrete} below for the motivation of the definitions of a regular point and $\mathfrak{a}_r$.

\begin{lemma}
Let $\mathfrak{a}$ and $\mathfrak{a}'$ be two maximal abelian subspaces of ${\mathfrak p}_1\cap {\mathfrak p}_2$. 
Then there exists $k\in F(\theta_1,G)\cap F(\theta_2,G)$ such that
$\mathfrak{a}'=\mathrm{Ad}(k)\mathfrak{a}$. 
Furthermore, $\mathfrak{a}'_r=\mathrm{Ad}(k)\mathfrak{a}_r$ holds. 
\end{lemma}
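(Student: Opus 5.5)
The plan is to reduce the first assertion to the conjugacy of maximal abelian subspaces in an ordinary Riemannian symmetric pair. Then I will check that regularity is transported by any element of $F(\theta_1,G)\cap F(\theta_2,G)$.

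\textbf{Step 1: the auxiliary symmetric pair.} Set $\mathfrak{m}={\mathfrak p}_1\cap{\mathfrak p}_2$ and $\mathfrak{l}=\mathfrak{k}_1\cap\mathfrak{k}_2$. Since $\theta_1$ and $\theta_2$ are automorphisms, we have
\[
[\mathfrak{m},\mathfrak{m}]\subset\mathfrak{l},\qquad [\mathfrak{l},\mathfrak{m}]\subset\mathfrak{m}.
\]
Hence $\mathfrak{g}_{12}:=\mathfrak{l}\oplus\mathfrak{m}$ is a subalgebra of $\mathfrak{g}$. Moreover, the linear map $\sigma$ equal to $1$ on $\mathfrak{l}$ and $-1$ on $\mathfrak{m}$ is an involutive automorphism of $\mathfrak{g}_{12}$; indeed $\sigma=\theta_1|_{\mathfrak{g}_{12}}$. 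Let $G_{12}$ be the connected subgroup of $G$ with Lie algebra $\mathfrak{g}_{12}$. Then $(\mathfrak{g}_{12},\mathfrak{l})$ is an orthogonal symmetric Lie algebra of compact type, since the bi-invariant metric restricts to an $\mathrm{ad}$-invariant inner product. By the standard conjugacy theorem (\cite[Ch.~V, Lemma~6.3]{Helgason}, or directly via the maximum-of-$\langle \mathrm{Ad}(k)H,H'\rangle$ argument), any two maximal abelian subspaces $\mathfrak{a},\mathfrak{a}'$ of $\mathfrak{m}$ are conjugate by some element $k$ of the connected subgroup with Lie algebra $\mathfrak{l}$.

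If one wants to avoid quoting the reductive (not necessarily semisimple) version, I would run the usual argument directly. Pick $H\in\mathfrak{a}$ and $H'\in\mathfrak{a}'$ generic, meaning their centralizers in $\mathfrak{m}$ are exactly $\mathfrak{a}$ and $\mathfrak{a}'$. Maximize $k\mapsto\langle\mathrm{Ad}(k)H',H\rangle$ over the compact group $(F(\theta_1,G)\cap F(\theta_2,G))_0$. At a critical point $k$ we get $\langle [X,\mathrm{Ad}(k)H'],H\rangle=0$ for all $X\in\mathfrak{l}$, so $\langle X,[\mathrm{Ad}(k)H',H]\rangle=0$. But $[\mathrm{Ad}(k)H',H]\in\mathfrak{l}$, so it vanishes. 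Genericity then gives $\mathrm{Ad}(k)H'\in\mathfrak{a}$, hence $\mathrm{Ad}(k)\mathfrak{a}'\subset\mathfrak{a}$. Equality follows by symmetry of dimensions, after first applying the argument in both directions to see $\dim\mathfrak{a}=\dim\mathfrak{a}'$; alternatively, maximality of $\mathrm{Ad}(k)\mathfrak{a}'$ gives equality directly. The existence of generic elements is the one point needing care: $\mathfrak{a}$ is the centralizer in $\mathfrak{m}$ of a dense set of its elements, because it is cut out by finitely many linear conditions (the restricted roots $\tilde\Sigma$). Such a $k$ lies in $F(\theta_1,G)\cap F(\theta_2,G)$. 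After inverting, this gives $\mathfrak{a}'=\mathrm{Ad}(k)\mathfrak{a}$.

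\textbf{Step 2: transporting regular points.} Let $k\in F(\theta_1,G)\cap F(\theta_2,G)$. Since $\theta_i(k)=k$, the map $\mathrm{Ad}(k)$ commutes with $\theta_i$ on $\mathfrak{g}$, so $\mathrm{Ad}(k){\mathfrak p}_i={\mathfrak p}_i$. For $H\in\mathfrak{a}$ we have $\exp(\mathrm{Ad}(k)H)=k\exp(H)k^{-1}$, and therefore
\[
\mathrm{Ad}(\exp \mathrm{Ad}(k)H){\mathfrak p}_1\cap{\mathfrak p}_2
=\mathrm{Ad}(k)\bigl(\mathrm{Ad}(\exp H)\mathrm{Ad}(k^{-1}){\mathfrak p}_1\cap\mathrm{Ad}(k^{-1}){\mathfrak p}_2\bigr)
=\mathrm{Ad}(k)\bigl(\mathrm{Ad}(\exp H){\mathfrak p}_1\cap{\mathfrak p}_2\bigr).
\]
Hence this equals $\mathfrak{a}'=\mathrm{Ad}(k)\mathfrak{a}$ if and only if $\mathrm{Ad}(\exp H){\mathfrak p}_1\cap{\mathfrak p}_2=\mathfrak{a}$. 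This shows $\mathfrak{a}'_r=\mathrm{Ad}(k)\mathfrak{a}_r$.

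The main obstacle is Step 1: making sure the conjugating element can be taken in $F(\theta_1,G)\cap F(\theta_2,G)$ without any commutativity of $\theta_1,\theta_2$, and without assuming $\mathfrak{g}_{12}$ is semisimple. The direct maximization argument handles this, since it only uses $[\mathfrak{m},\mathfrak{m}]\subset\mathfrak{l}$ and the invariance of the inner product.
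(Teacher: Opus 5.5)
Your proposal is correct and follows the paper's proof. The paper likewise passes to the compact symmetric pair $(G_{12},G_{12}\cap F(\theta_1,G)\cap F(\theta_2,G))$ with $G_{12}=F(\theta_1\theta_2,G)_0$, whose Lie algebra is exactly your $(\mathfrak{k}_1\cap\mathfrak{k}_2)\oplus(\mathfrak{p}_1\cap\mathfrak{p}_2)$, cites Helgason's conjugacy lemma, and then transports regular points by the same computation using $\mathrm{Ad}(k)\mathfrak{p}_i=\mathfrak{p}_i$.

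Your direct maximization argument is a sound self-contained replacement for the citation. One small correction: $\mathrm{Ad}(k)H'\in\mathfrak{a}$ gives the reverse inclusion $\mathfrak{a}\subset\mathrm{Ad}(k)\mathfrak{a}'$. The inclusion $\mathrm{Ad}(k)\mathfrak{a}'\subset\mathfrak{a}$ comes instead from genericity of $H'$, which yields $H\in\mathrm{Ad}(k)\mathfrak{a}'$. Either inclusion together with maximality gives equality, so this does not affect the conclusion.
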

\begin{proof}
First, we show the first part. 
Let $G_{12}$ denote the identity component of \linebreak$F(\theta_1\theta_2,G)$. 
Then $(G_{12},G_{12}\cap F(\theta_1,G)\cap F(\theta_2,G))$ is a compact symmetric pair. 
The canonical decomposition of the Lie algebra $\mathfrak{g}_{12}$ of 
$G_{12}$ is given by 
$\mathfrak{g}_{12}=(\mathfrak{k}_1\cap {\mathfrak k}_2)\oplus (\mathfrak{p}_1\cap {\mathfrak p}_2)$. 
Applying a general theory of compact symmetric pair (\cite[p.~247, Lemma~6.3, (iii)]{Helgason}) to 
 $(G_{12},G_{12}\cap F(\theta_1,G)\cap F(\theta_2,G))$, we get the assertion. 

Next, we show the latter part. 
Since ${\mathfrak p}_i$ is $\mathrm{Ad}(k)$-invariant, we have 
\begin{align*}
{\mathfrak a}'_r
&=\{X\in {\mathfrak a}'\mid (\mathrm{Ad}(\exp X){\mathfrak p}_1)\cap {\mathfrak p}_2
={\mathfrak a}'\}\\
&=\{\mathrm{Ad}(k)H\mid 
H\in {\mathfrak a}, 
(\mathrm{Ad}(\exp (\mathrm{Ad}(k)H)){\mathfrak p}_1)\cap {\mathfrak p}_2
=\mathrm{Ad}(k){\mathfrak a}\}\\
&=\mathrm{Ad}(k)\mathfrak{a}_r.
\end{align*}
\end{proof}

The following lemma is obvious.

\begin{lemma} \label{lem:regular}
Retain the notation as in Lemma~\ref{lem:equiv}. 
The set ${\mathfrak a}_r'$ of regular points with respect to the maximal abelian subspace 
${\mathfrak a}'$ of ${\mathfrak p}_1'\cap {\mathfrak p}_2'$ is given as follows in cases 
(1) and (2) of Lemma~\ref{lem:equiv}, where we set $a=\exp Y\; (Y\in\mathfrak{a})$ 
in case (2) of Lemma~\ref{lem:equiv}. 
\[
{\mathfrak a}_r'=\left\{
\begin{array}{ll}
\varphi({\mathfrak a}_r) & \mbox{(Lemma~\ref{lem:equiv}, (1))},\\
\mathrm{Ad}(k_2)\{H\in\mathfrak a\mid H+Y\in \mathfrak{a}_r\} 
& \mbox{(Lemma~\ref{lem:equiv}, (2)).}
\end{array}
\right.
\]
\end{lemma}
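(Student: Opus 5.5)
The plan is to unwind the definition of regular points in each case, using the explicit descriptions of $\mathfrak{p}_1'$, $\mathfrak{p}_2'$ and $\mathfrak{a}'$ from Lemma~\ref{lem:equiv} together with (\ref{eqn:basic0}) and (\ref{eqn:basic}). The main point is to transport the condition $(\mathrm{Ad}(\exp H')\mathfrak{p}_1')\cap\mathfrak{p}_2'=\mathfrak{a}'$ back to the unprimed data.

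\emph{Case (1).} Here $\tau=1$, so $\mathfrak{p}_i'=\varphi(\mathfrak{p}_i)$ and $\mathfrak{a}'=\varphi(\mathfrak{a})$. Write $H'=\varphi(H)$ with $H\in\mathfrak{a}$. Since $\varphi$ is an automorphism, $\varphi(\exp H)=\exp\varphi(H)$, and therefore $\mathrm{Ad}(\exp\varphi(H))\varphi=\varphi\,\mathrm{Ad}(\exp H)$ on $\mathfrak{g}$. It follows that
\[
(\mathrm{Ad}(\exp H')\mathfrak{p}_1')\cap\mathfrak{p}_2'=\varphi\bigl((\mathrm{Ad}(\exp H)\mathfrak{p}_1)\cap\mathfrak{p}_2\bigr).
\]
This equals $\varphi(\mathfrak{a})$ if and only if $(\mathrm{Ad}(\exp H)\mathfrak{p}_1)\cap\mathfrak{p}_2=\mathfrak{a}$, because $\varphi$ is injective. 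Hence $\mathfrak{a}_r'=\varphi(\mathfrak{a}_r)$.

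\emph{Case (2).} Here $\varphi=1$ and $\tau=\tau_{k_2ak_1}$ with $a=\exp Y$. First, $\mathfrak{p}_1'=\mathrm{Ad}(k_2ak_1)\mathfrak{p}_1=\mathrm{Ad}(k_2)\mathrm{Ad}(a)\mathfrak{p}_1$, since $\mathrm{Ad}(k_1)$ preserves $\mathfrak{p}_1$. Second, $\mathfrak{p}_2'=\mathfrak{p}_2=\mathrm{Ad}(k_2)\mathfrak{p}_2$, and $\mathfrak{a}'=\mathrm{Ad}(k_2)\mathfrak{a}$. Write $H'=\mathrm{Ad}(k_2)H$ with $H\in\mathfrak{a}$. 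Then $\exp H'=k_2\exp(H)k_2^{-1}$, which gives
\[
(\mathrm{Ad}(\exp H')\mathfrak{p}_1')\cap\mathfrak{p}_2'=\mathrm{Ad}(k_2)\bigl((\mathrm{Ad}(\exp H)\mathrm{Ad}(\exp Y)\mathfrak{p}_1)\cap\mathfrak{p}_2\bigr).
\]
Because $\mathfrak{a}$ is abelian, $\exp H\exp Y=\exp(H+Y)$. So $H'$ is regular if and only if $(\mathrm{Ad}(\exp(H+Y))\mathfrak{p}_1)\cap\mathfrak{p}_2=\mathfrak{a}$, that is, if and only if $H+Y\in\mathfrak{a}_r$. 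This is the stated formula.

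There is no real obstacle here; the statement is essentially bookkeeping. The only points needing care are that $\mathrm{Ad}(k_1)$ fixes $\mathfrak{p}_1$ and $\mathrm{Ad}(k_2)$ fixes $\mathfrak{p}_2$ (true since $k_i\in K_i\subset F(\theta_i,G)$), and that the exponentials of elements of $\mathfrak{a}$ combine additively.
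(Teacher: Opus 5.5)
Your proof is correct. It is the direct unwinding of the definition that the paper has in mind when it calls this lemma obvious; the paper writes no proof, but the same computation pattern appears in its proof that $\mathfrak{a}'_r=\mathrm{Ad}(k)\mathfrak{a}_r$ for $k\in F(\theta_1,G)\cap F(\theta_2,G)$, and your checks ($\mathrm{Ad}(k_i)\mathfrak{p}_i=\mathfrak{p}_i$, $\mathrm{Ad}(\exp\varphi(H))\varphi=\varphi\,\mathrm{Ad}(\exp H)$, and $\exp H\exp Y=\exp(H+Y)$ on $\mathfrak{a}$) are exactly what is needed.
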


Here, we summarize the well-known results about compact symmetric triads 
in order to use the proof of Theorem~\ref{thm:int}.
For a while, we assume that the order $\mathrm{ord}(\theta_1\theta_2)$ of $\theta_1\theta_2$ is finite, and 
set $n:=\mathrm{ord}(\theta_1\theta_2)\in \mathbb{N}$. 
In this case, we can use the third author's results in \cite{Ohno}.
The complex linear extension of $\theta_1\theta_2$ to ${\mathfrak g}^{\mathbb C}$ is 
diagonalizable, and its eigenvalues are $n$-roots of $1$. 
For $\epsilon \in U(1)$, we define the complex subspace $\mathfrak{g}(\mathfrak{a},\alpha,\epsilon)$ of 
$\mathfrak{g}(\mathfrak{a},\alpha)$ by 
\[
\mathfrak{g}(\mathfrak{a},\alpha,\epsilon)=\{X\in \mathfrak{g}(\mathfrak{a},\alpha)\mid \theta_1\theta_2 X=\epsilon X\}. 
\]
Since $\mathfrak{g}(\mathfrak{a},\alpha)$ is $\theta_1\theta_2$-invariant, we have 
\begin{equation}
\label{eqn:decomp0}
\mathfrak{g}(\mathfrak{a},\alpha)=\sum_{\epsilon\in U(1)}\mathfrak{g}(\mathfrak{a},\alpha,\epsilon). 
\end{equation}
Moreover, we get 
\begin{equation}
\label{eqn:alphep}
\overline{\mathfrak{g}(\mathfrak{a},\alpha,\epsilon)}
=\theta_i(\mathfrak{g}(\mathfrak{a},\alpha,\epsilon))
=\mathfrak{g}(\mathfrak{a},-\alpha,\epsilon^{-1}),\quad 
[\mathfrak{a},\mathfrak{g}(\mathfrak{a},\alpha,\epsilon)]\subset 
\mathfrak{g}(\mathfrak{a},\alpha,\epsilon).
\end{equation}

For $\epsilon\in U(1)$, we put
\[
\Sigma_\epsilon=\{\alpha\in\tilde{\Sigma}\mid \mathfrak{g}(\mathfrak{a},\alpha,\epsilon)\not=\{0\}\}.
\]
Then we have 
\[
\tilde{\Sigma}=\bigcup_{\epsilon\in U(1)}\Sigma_\epsilon.
\]

Define the complex subspace $U(\alpha,\epsilon)$ of ${\mathfrak{g}}^{\mathbb C}$ by
\[
U(\alpha,\epsilon)=\mathfrak{g}(\mathfrak{a},\alpha,\epsilon)\oplus \mathfrak{g}(\mathfrak{a},-\alpha,\epsilon^{-1}).
\]
Then we have
$\overline{U(\alpha,\epsilon)}=\theta_i(U(\alpha,\epsilon))=U(\alpha,\epsilon)$
and $[\mathfrak{a},U(\alpha,\epsilon)]\subset U(\alpha,\epsilon)$ by (\ref{eqn:alphep}). 
Define the subspace $V(\alpha,\epsilon)$ of $\mathfrak{g}$ by 
\[
V(\alpha,\epsilon)=U(\alpha,\epsilon)\cap \mathfrak{g}.
\]
Then $\theta_i(V(\alpha,\epsilon))=V(\alpha,\epsilon)$ and 
$[\mathfrak{a},V(\alpha,\epsilon)]\subset V(\alpha,\epsilon)$ hold. 
Set 
\[
\mathfrak{g}(0)=\{X\in\mathfrak{g}^{\mathbb C}\mid [X,\mathfrak{a}]=\{0\}\}. 
\]
Then by \cite[p.~66, the third line from the bottom]{Ohno}, we have 
\begin{equation}
\mathfrak g
=(\mathfrak{g}(0)\cap \mathfrak g)
\oplus\sum_{\alpha\in\tilde{\Sigma}^+}\sum_{\epsilon\in U(1)}V(\alpha,\epsilon),
\label{eqn:frakg}
\end{equation}
where $\tilde{\Sigma}^+$ denote the set of positive roots in  $\tilde{\Sigma}$. 
We set $\Sigma^+_\epsilon=\tilde{\Sigma}^+\cap \Sigma_\epsilon$ for $\epsilon\in U(1)$. 

Since $\mathfrak g(0)\cap\mathfrak g$ and $V(\alpha,\epsilon)$ are 
$\theta_i$-invariant for $i=1,2$, by using (\ref{eqn:frakg}), we have 
\begin{align}
& {\mathfrak p}_i
=(\mathfrak g(0)\cap {\mathfrak p}_i)\oplus
\sum_{\epsilon\in U(1)}\sum_{\alpha\in \Sigma^+_\epsilon}
(V(\alpha,\epsilon)\cap {\mathfrak p}_i),\label{eqn:m1}\\
& {\mathfrak k}_i
=(\mathfrak g(0)\cap {\mathfrak k}_i)\oplus
\sum_{\epsilon\in U(1)}\sum_{\alpha\in \Sigma^+_\epsilon}
(V(\alpha,\epsilon)\cap {\mathfrak k}_i)\label{eqn:k1}.
\end{align}

Note that 
\begin{align*}
V(\alpha,\epsilon)&=(V(\alpha,\epsilon)\cap\mathfrak{k}_1)\oplus (V(\alpha,\epsilon)\cap\mathfrak{p}_1)\\
&=(V(\alpha,\epsilon)\cap\mathfrak{k}_2)\oplus (V(\alpha,\epsilon)\cap\mathfrak{p}_2),
\end{align*}
since $V(\alpha,\epsilon)$ is $\theta_i$-invariant for $i=1,2$.
We know by \cite{Ohno} that 
\begin{align*}
&\dim (V(\alpha,\epsilon)\cap {\mathfrak p}_1)
=\dim (V(\alpha,\epsilon)\cap {\mathfrak k}_1)\\
&=\dim (V(\alpha,\epsilon)\cap {\mathfrak p}_2)
=\dim (V(\alpha,\epsilon)\cap {\mathfrak k}_2).
\end{align*}
Set $m(\alpha,\epsilon)=\dim (V(\alpha,\epsilon)\cap {\mathfrak p}_1)$. 
For an eigenvalue $\epsilon\in U(1)$ of $\theta_1\theta_2$, we define $\varphi_\epsilon\in (-\pi/2,\pi/2]$ 
by $\epsilon =e^{2\sqrt{-1}\varphi_\epsilon}$.

\begin{lemma} \cite[Lemma~3]{Ohno} \label{lem:3}
For each $\epsilon\in U(1)$ and $\alpha\in \Sigma^+_\epsilon$, 
there exist orthonormal bases $\{X^\epsilon_{\alpha, i}\}_{i=1}^{m(\alpha,\epsilon)}$ and 
$\{Y^\epsilon_{\alpha,i}\}_{i=1}^{m(\alpha,\epsilon)}$ of $V(\alpha,\epsilon)\cap {\mathfrak k}_1$ and 
$V(\alpha,\epsilon)\cap {\mathfrak p}_1$ respectively which satisfy the following conditions (1) and (2). 
\begin{enumerate}
\item[(1)] For any $H\in\mathfrak a$, 
$$
[H,X^\epsilon_{\alpha,i}]=\langle\alpha,H\rangle Y^\epsilon_{\alpha,i},\;
[H,Y^\epsilon_{\alpha,i}]=-\langle\alpha,H\rangle X^\epsilon_{\alpha,i},\; 
[X^\epsilon_{\alpha,i},Y^\epsilon_{\alpha,i}]=\alpha
$$
hold. 
Moreover, for $H\in\mathfrak a$, 
\begin{align*}
& \mathrm{Ad}(\exp H)X^\epsilon_{\alpha,i}
=\cos\langle\alpha,H\rangle X^\epsilon_{\alpha,i}
+\sin \langle\alpha,H\rangle Y^\epsilon_{\alpha,i},\\
& \mathrm{Ad}(\exp H)Y^\epsilon_{\alpha,i}
=-\sin\langle\alpha,H\rangle X^\epsilon_{\alpha,i}
+\cos\langle\alpha,H\rangle Y^\epsilon_{\alpha,i}
\end{align*}
hold. 
\item[(2)] For $H_\epsilon=H_{\epsilon,\alpha}\in\mathfrak a$ which satisfies 
$\langle\alpha,H_\epsilon\rangle=\varphi_\epsilon$, we set
\[
\tilde{X}_{\alpha,i}^\epsilon=\mathrm{Ad}(\exp H_\epsilon)^{-1}X_{\alpha,i}^\epsilon, 
\quad 
\tilde{Y}_{\alpha,i}^\epsilon=\mathrm{Ad}(\exp H_\epsilon)^{-1}Y_{\alpha,i}^\epsilon.
\]
\end{enumerate}
Then $\{\tilde{X}^\epsilon_{\alpha,i}\}_{i=1}^{m(\alpha,\epsilon)}$
and $\{\tilde{Y}^\epsilon_{\alpha,i}\}_{i=1}^{m(\alpha,\epsilon)}$ are orthonormal bases of 
$V(\alpha,\epsilon)\cap {\mathfrak k}_2$ and $V(\alpha,\epsilon)\cap {\mathfrak p}_2$ respectively, 
and for any $H\in\mathfrak a$, 
\[
[H,\tilde{X}^\epsilon_{\alpha,i}]=\langle\alpha,H\rangle \tilde{Y}^\epsilon_{\alpha,i},\quad 
[H,\tilde{Y}^\epsilon_{\alpha,i}]=-\langle\alpha,H\rangle \tilde{X}^\epsilon_{\alpha,i},\quad 
[\tilde{X}^\epsilon_{\alpha,i},\tilde{Y}^\epsilon_{\alpha,i}]=\alpha
\]
hold. 
Moreover, for $H\in\mathfrak a$, 
\begin{align*}
& \mathrm{Ad}(\exp H)\tilde{X}^\epsilon_{\alpha,i}
=\cos\langle\alpha,H\rangle \tilde{X}^\epsilon_{\alpha,i}
+\sin \langle\alpha,H\rangle \tilde{Y}^\epsilon_{\alpha,i},\\
& \mathrm{Ad}(\exp H)\tilde{Y}^\epsilon_{\alpha,i}
=-\sin\langle\alpha,H\rangle \tilde{X}^\epsilon_{\alpha,i}
+\cos\langle\alpha,H\rangle \tilde{Y}^\epsilon_{\alpha,i}
\end{align*}
hold. 
\end{lemma}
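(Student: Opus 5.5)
The plan is to build the bases inside each real space $V(\alpha,\epsilon)$ directly from root vectors, and then transport them to the $\theta_2$-side by a single $\mathrm{Ad}(\exp H_\epsilon)$.

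\textbf{Step 1: the $\theta_1$-bases.} Fix $\epsilon$ and $\alpha\in\Sigma^+_\epsilon$. For $Z\in\mathfrak{g}(\mathfrak{a},\alpha,\epsilon)$ put
\[
X_Z=Z+\bar Z+\theta_1(Z+\bar Z), \qquad Y_Z=\sqrt{-1}\,\bigl(Z-\bar Z-\theta_1(Z-\bar Z)\bigr).
\]
By (\ref{eqn:alphep}), $\bar Z$ and $\theta_1 Z$ both lie in $\mathfrak{g}(\mathfrak{a},-\alpha,\epsilon^{-1})$. Hence $X_Z\in V(\alpha,\epsilon)\cap\mathfrak{k}_1$ and $Y_Z\in V(\alpha,\epsilon)\cap\mathfrak{p}_1$. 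Since $\theta_1$ fixes $\mathfrak{a}$ pointwise up to sign (indeed $\theta_1 H=-H$ on $\mathfrak{a}\subset\mathfrak{p}_1$), $\theta_1$ maps the $\alpha$-eigenspace to the $-\alpha$-eigenspace. The definition (\ref{eqn:compg}) then gives $[H,X_Z]=\langle\alpha,H\rangle Y_Z$ and $[H,Y_Z]=-\langle\alpha,H\rangle X_Z$, up to fixing the sign convention. Equivalently, $\mathrm{ad}(H)$ restricted to $V(\alpha,\epsilon)$ interchanges the $\mathfrak{k}_1$- and $\mathfrak{p}_1$-parts, and $-\mathrm{ad}(H)^2=\langle\alpha,H\rangle^2$ there.

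So pick an orthonormal basis $X_i$ of $V(\alpha,\epsilon)\cap\mathfrak{k}_1$ and set $Y_i=[H_0,X_i]/\langle\alpha,H_0\rangle$ for some $H_0$ with $\langle\alpha,H_0\rangle\ne0$. Skew-symmetry of $\mathrm{ad}$ together with $-\mathrm{ad}(H_0)^2=\langle\alpha,H_0\rangle^2$ shows the $Y_i$ are orthonormal. They form a basis of $V(\alpha,\epsilon)\cap\mathfrak{p}_1$ because, by the equality of dimensions quoted from \cite{Ohno}, $\mathrm{ad}(H_0)$ is an isomorphism between these two spaces.

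The bracket $[X_i,Y_i]$ lies in $[\mathfrak{k}_1,\mathfrak{p}_1]\subset\mathfrak{p}_1$ and has $\mathfrak{a}$-weight $0$ or $\pm2\alpha$. Its pairing with $H\in\mathfrak{a}$ is $\langle [X_i,Y_i],H\rangle=\langle Y_i,[H,X_i]\rangle=\langle\alpha,H\rangle$. To conclude $[X_i,Y_i]=\alpha$, I have to rule out components outside $\mathfrak{a}$. This will be handled by choosing the $X_i$ via a $\theta_1$-compatible orthogonalization in the complex spaces $\mathfrak{g}(\mathfrak{a},\alpha,\epsilon)$, using the $\mathfrak{sl}_2$-triple attached to each root vector, as in the standard proof for symmetric pairs. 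I regard this as the \textbf{main technical step}. The exponential formulas then follow by integrating the $2\times2$ rotation given by $\mathrm{ad}(H)$ on $\mathrm{span}\{X_i,Y_i\}$.

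\textbf{Step 2: transport to the $\theta_2$-side.} The key identity is that $\theta_1\theta_2=\epsilon$ on $\mathfrak{g}(\mathfrak{a},\alpha,\epsilon)$ and $\epsilon^{-1}$ on the conjugate space. Write $\epsilon=e^{2\sqrt{-1}\varphi_\epsilon}$ with $\langle\alpha,H_\epsilon\rangle=\varphi_\epsilon$. Then $\mathrm{Ad}(\exp 2H_\epsilon)$ acts on $\mathfrak{g}(\mathfrak{a},\pm\alpha,\cdot)$ by $e^{\pm2\sqrt{-1}\varphi_\epsilon}$, so $\theta_2=\theta_1\,\mathrm{Ad}(\exp 2H_\epsilon)$ on $U(\alpha,\epsilon)$. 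Using $\theta_1\mathrm{Ad}(\exp H)\theta_1=\mathrm{Ad}(\exp(-H))$ for $H\in\mathfrak{a}$, this becomes
\[
\theta_2=\mathrm{Ad}(\exp H_\epsilon)^{-1}\,\theta_1\,\mathrm{Ad}(\exp H_\epsilon) \quad\text{on } V(\alpha,\epsilon).
\]

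\textbf{Step 3: conclusion.} By Step 2, $\mathrm{Ad}(\exp H_\epsilon)^{-1}$ carries $V(\alpha,\epsilon)\cap\mathfrak{k}_1$ and $V(\alpha,\epsilon)\cap\mathfrak{p}_1$ isometrically onto $V(\alpha,\epsilon)\cap\mathfrak{k}_2$ and $V(\alpha,\epsilon)\cap\mathfrak{p}_2$, respectively. It commutes with $\mathrm{ad}(\mathfrak{a})$ and fixes $\alpha\in\mathfrak{a}$. Therefore all the relations from Step 1 transfer verbatim to the $\tilde X_{\alpha,i}^\epsilon,\tilde Y_{\alpha,i}^\epsilon$, which gives part (2).
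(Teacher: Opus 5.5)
Your Steps~2 and~3 are correct. On $U(\alpha,\epsilon)$ one has $\theta_1\theta_2=\mathrm{Ad}(\exp 2H_\epsilon)$, so $\theta_2=\mathrm{Ad}(\exp H_\epsilon)^{-1}\theta_1\mathrm{Ad}(\exp H_\epsilon)$ on $V(\alpha,\epsilon)$. Transporting everything by $\mathrm{Ad}(\exp H_\epsilon)^{-1}$, which commutes with $\mathrm{ad}(\mathfrak{a})$ and fixes $\alpha$, is then fine. Your construction of the $Y_i$ and the rotation formulas are also fine.

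The gap is the identity $[X_i,Y_i]=\alpha$. You flag it as the main technical step but do not prove it, and the route you sketch is misdirected. In the symmetric pair case nothing clever is needed: the bracket lands in $\mathfrak{a}$ automatically because the centralizer of $\mathfrak{a}$ in $\mathfrak{p}$ is $\mathfrak{a}$. Here $[X_i,Y_i]$ a priori lies only in $\mathfrak{g}(0)\cap\mathfrak{p}_1$, and that can be strictly larger than $\mathfrak{a}$. In the order-$3$ example of Section~\ref{sec:order3}, $\mathfrak{g}(0)=\mathfrak{t}^{\mathbb C}$, so $\mathfrak{g}(0)\cap\mathfrak{p}_1=\mathfrak{a}_1$ is $3$-dimensional while $\mathfrak{a}$ is $2$-dimensional. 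So the standard proof does not transfer. Choosing $\mathfrak{sl}_2$-compatible root vectors does not by itself kill the component of $[X_i,Y_i]$ orthogonal to $\mathfrak{a}$ inside $\mathfrak{a}_1$.

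The missing idea is to use the $\theta_1\theta_2$-eigenvalue.

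1. Write $X_i=W+\bar W$ with $W\in\mathfrak{g}(\mathfrak{a},\alpha,\epsilon)$. Then $Y_i=\sqrt{-1}(W-\bar W)$ and $[X_i,Y_i]=-2\sqrt{-1}[W,\bar W]$.
2. This bracket has $\mathfrak{a}$-weight exactly $0$; there is no $\pm2\alpha$ part, since $[W,W]=0$.
3. Since $\theta_1\theta_2W=\epsilon W$ and $\theta_1\theta_2\bar W=\epsilon^{-1}\bar W$, the bracket is fixed by $\theta_1\theta_2$.
4. It lies in $\mathfrak{p}_1$, so $\theta_2[X_i,Y_i]=\theta_1(\theta_1\theta_2)[X_i,Y_i]=\theta_1[X_i,Y_i]=-[X_i,Y_i]$. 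Hence it also lies in $\mathfrak{p}_2$.
5. By maximality of $\mathfrak{a}$, it therefore lies in $\mathfrak{g}(0)\cap\mathfrak{p}_1\cap\mathfrak{p}_2=\mathfrak{a}$.

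Your pairing computation $\langle [X_i,Y_i],H\rangle=\langle\alpha,H\rangle$ then gives $[X_i,Y_i]=\alpha$. This holds for every orthonormal basis $\{X_i\}$ of $V(\alpha,\epsilon)\cap\mathfrak{k}_1$, so no special choice of basis is needed. With this step inserted, your argument is complete.
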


\begin{lemma} \label{fact:1}
For $H\in\mathfrak a$, put $a=\exp H$. 
For $\epsilon\in U(1)$ and $\alpha\in \Sigma^+_\epsilon$, 
the following equation holds: 
\[
V(\alpha,\epsilon)\cap \mathrm{Ad}(a){\mathfrak p}_1\cap {\mathfrak p}_2
=\left\{
\begin{array}{ll}
V(\alpha,\epsilon)\cap {\mathfrak p}_2 & 
(\langle\alpha,H\rangle+\varphi_\epsilon \in\pi\mathbb{Z}),\\
\{0\} & (\langle\alpha,H\rangle+\varphi_\epsilon \not\in\pi\mathbb{Z}).
\end{array}
\right.
\]
\end{lemma}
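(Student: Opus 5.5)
The plan is to compute directly in the bases of Lemma~\ref{lem:3}. Write a general element of $V(\alpha,\epsilon)\cap\mathrm{Ad}(a)\mathfrak{p}_1$ as $\mathrm{Ad}(a)Z$ with $Z\in V(\alpha,\epsilon)\cap\mathfrak{p}_1$. This is legitimate because $V(\alpha,\epsilon)$ is $\mathrm{Ad}(\exp\mathfrak{a})$-invariant, since $[\mathfrak{a},V(\alpha,\epsilon)]\subset V(\alpha,\epsilon)$. Expand $Z=\sum_i c_iY^\epsilon_{\alpha,i}$. The task is then to decide when $\mathrm{Ad}(a)Z$ lies in $\mathfrak{p}_2$, that is, when its component in $V(\alpha,\epsilon)\cap\mathfrak{k}_2$ vanishes, using the decomposition $V(\alpha,\epsilon)=(V(\alpha,\epsilon)\cap\mathfrak{k}_2)\oplus(V(\alpha,\epsilon)\cap\mathfrak{p}_2)$.

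The first step is to express $Y^\epsilon_{\alpha,i}$ in the $\theta_2$-adapted basis. By Lemma~\ref{lem:3}~(2), $Y^\epsilon_{\alpha,i}=\mathrm{Ad}(\exp H_\epsilon)\tilde Y^\epsilon_{\alpha,i}$. The rotation formulas in Lemma~\ref{lem:3}~(2) then give
\[
\mathrm{Ad}(a)Y^\epsilon_{\alpha,i}=\mathrm{Ad}(\exp(H+H_\epsilon))\tilde Y^\epsilon_{\alpha,i}
=-\sin(\langle\alpha,H\rangle+\varphi_\epsilon)\tilde X^\epsilon_{\alpha,i}+\cos(\langle\alpha,H\rangle+\varphi_\epsilon)\tilde Y^\epsilon_{\alpha,i}.
\]
Hence
\[
\mathrm{Ad}(a)Z=-\sin(\langle\alpha,H\rangle+\varphi_\epsilon)\sum_i c_i\tilde X^\epsilon_{\alpha,i}+\cos(\langle\alpha,H\rangle+\varphi_\epsilon)\sum_i c_i\tilde Y^\epsilon_{\alpha,i}.
\]
The $\tilde X$'s form a basis of $V(\alpha,\epsilon)\cap\mathfrak{k}_2$, so $\mathrm{Ad}(a)Z\in\mathfrak{p}_2$ holds if and only if $\sin(\langle\alpha,H\rangle+\varphi_\epsilon)\,c_i=0$ for all $i$. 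We now split into two cases.

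If $\langle\alpha,H\rangle+\varphi_\epsilon\in\pi\mathbb{Z}$, the sine vanishes, so every $\mathrm{Ad}(a)Z$ lies in $V(\alpha,\epsilon)\cap\mathfrak{p}_2$. Thus $V(\alpha,\epsilon)\cap\mathrm{Ad}(a)\mathfrak{p}_1\cap\mathfrak{p}_2=\mathrm{Ad}(a)(V(\alpha,\epsilon)\cap\mathfrak{p}_1)$. This space has dimension $m(\alpha,\epsilon)=\dim(V(\alpha,\epsilon)\cap\mathfrak{p}_2)$ and is contained in $V(\alpha,\epsilon)\cap\mathfrak{p}_2$, so the two are equal. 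If $\langle\alpha,H\rangle+\varphi_\epsilon\notin\pi\mathbb{Z}$, all $c_i$ must vanish, and the intersection is $\{0\}$.

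The only point needing care is the initial claim that $V(\alpha,\epsilon)\cap\mathrm{Ad}(a)\mathfrak{p}_1=\mathrm{Ad}(a)(V(\alpha,\epsilon)\cap\mathfrak{p}_1)$. It follows because $\mathrm{Ad}(a)$ preserves $V(\alpha,\epsilon)$ and is invertible on it. One should also confirm that $H_\epsilon$ exists, which holds since $\alpha\neq0$. Beyond that the argument is a routine rotation computation, and I expect no genuine obstacle.
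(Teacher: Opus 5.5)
Your proposal is correct and is essentially the paper's proof: you use the same reduction $V(\alpha,\epsilon)\cap\mathrm{Ad}(a)\mathfrak{p}_1=\mathrm{Ad}(a)(V(\alpha,\epsilon)\cap\mathfrak{p}_1)$, followed by the same rotation formula $\mathrm{Ad}(a)Y^\epsilon_{\alpha,i}=-\sin(\langle\alpha,H\rangle+\varphi_\epsilon)\tilde X^\epsilon_{\alpha,i}+\cos(\langle\alpha,H\rangle+\varphi_\epsilon)\tilde Y^\epsilon_{\alpha,i}$ from Lemma~\ref{lem:3}. The differences are cosmetic: you handle general linear combinations explicitly and close the first case by a dimension count, whereas the paper argues on basis vectors and observes that $\mathrm{Ad}(a)Y^\epsilon_{\alpha,i}=\pm\tilde Y^\epsilon_{\alpha,i}$.
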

\begin{proof}
Since $[\mathfrak a,V(\alpha,\epsilon)]$ is a subset of $V(\alpha,\epsilon)$, we have
\[
V(\alpha,\epsilon)\cap \mathrm{Ad}(a){\mathfrak p}_1
=\mathrm{Ad}(a)(V(\alpha,\epsilon)\cap {\mathfrak p}_1).
\]
By Lemma~\ref{lem:3}, we have
\begin{align*}
\mathrm{Ad}(a)Y_{\alpha,i}^\epsilon
&=\mathrm{Ad}(\exp(H+H_\epsilon))\tilde{Y}_{\alpha,i}^\epsilon\\
&=-\sin \langle\alpha,H+H_\epsilon\rangle \tilde{X}_{\alpha,i}^\epsilon
+\cos \langle\alpha,H+H_\epsilon\rangle \tilde{Y}_{\alpha,i}^\epsilon,
\end{align*}
which implies that
\[
\mathrm{Ad}(a)Y_{\alpha,i}^\epsilon\in {\mathfrak p}_2
\Leftrightarrow \sin (\langle\alpha,H\rangle +\varphi_\epsilon)=0
\Leftrightarrow \langle\alpha,H\rangle +\varphi_\epsilon\in\pi\mathbb{Z}.
\]
In this case, we have $\mathrm{Ad}(a)Y_{\alpha,i}^\epsilon=\pm \tilde{Y}_{\alpha,i}^\epsilon$. 
Thus we get the assertion. 
\end{proof}

\begin{lemma}\label{fact:2}
For $H\in\mathfrak a$, set $a=\exp H$. 
Then the following holds:
\begin{equation}
\label{eqn:decomp}
\mathrm{Ad}(a){\mathfrak p}_1\cap {\mathfrak p}_2
=\mathfrak a\oplus \sum_{\epsilon\in U(1)}
\sum_{\substack{\alpha\in \Sigma_\epsilon^+\\ \langle\alpha,H\rangle +\varphi_\epsilon\in\pi
\mathbb{Z}}}(V(\alpha,\epsilon)\cap {\mathfrak p}_2).
\end{equation}
Furthermore, the following three conditions are equivalent.
\begin{enumerate}
\item[(1)] $H$ is a regular point of $(G,\theta_1,\theta_2)$ with respect to $\mathfrak{a}$. 
\item[(2)] $\langle\alpha,H\rangle+\varphi_\epsilon\not\in\pi\mathbb{Z}$ for any $\epsilon\in U(1)$ and $\alpha\in \Sigma_\epsilon^+$. 
\item[(3)] $\langle\alpha,H\rangle+\varphi_\epsilon\not\in\pi\mathbb{Z}$ for any $\epsilon\in U(1)$ and $\alpha\in \Sigma_\epsilon$. 
\end{enumerate}
\end{lemma}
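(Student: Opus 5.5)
The plan is to intersect the root space decomposition (\ref{eqn:m1}) of ${\mathfrak p}_1$, transported by $\mathrm{Ad}(a)$, with the corresponding decomposition of ${\mathfrak p}_2$, and then to read off each piece using Lemma~\ref{fact:1}.

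First, since $a=\exp H$ with $H\in\mathfrak a$, the operator $\mathrm{Ad}(a)$ preserves $\mathfrak g(0)\cap\mathfrak g$ and every $V(\alpha,\epsilon)$. Applying $\mathrm{Ad}(a)$ to (\ref{eqn:m1}) for $i=1$ therefore gives
\[
\mathrm{Ad}(a){\mathfrak p}_1=(\mathfrak g(0)\cap{\mathfrak p}_1)\oplus\sum_{\epsilon}\sum_{\alpha\in\Sigma^+_\epsilon}\mathrm{Ad}(a)(V(\alpha,\epsilon)\cap{\mathfrak p}_1),
\]
where the summands sit inside the mutually orthogonal blocks of (\ref{eqn:frakg}). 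The decomposition (\ref{eqn:m1}) for $i=2$ is adapted to the same blocks. Because both subspaces are direct sums of their pieces within a common orthogonal block decomposition, their intersection is the sum of the blockwise intersections.

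On the block $\mathfrak g(0)\cap\mathfrak g$ we need $\mathfrak g(0)\cap{\mathfrak p}_1\cap{\mathfrak p}_2=\mathfrak a$. This holds because $\mathfrak a$ is maximal abelian in ${\mathfrak p}_1\cap{\mathfrak p}_2$ and $\mathfrak g(0)\cap{\mathfrak p}_1\cap{\mathfrak p}_2$ is exactly the centralizer of $\mathfrak a$ in ${\mathfrak p}_1\cap{\mathfrak p}_2$. On each block $V(\alpha,\epsilon)$, Lemma~\ref{fact:1} gives either $V(\alpha,\epsilon)\cap{\mathfrak p}_2$ or $\{0\}$, according to whether $\langle\alpha,H\rangle+\varphi_\epsilon\in\pi\mathbb Z$. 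Together these yield (\ref{eqn:decomp}).

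\textbf{Equivalence of (1) and (2).} By definition, $H$ is regular if and only if the intersection equals $\mathfrak a$. By (\ref{eqn:decomp}) this happens if and only if every extra summand vanishes. Since $m(\alpha,\epsilon)\ge1$ for $\alpha\in\Sigma^+_\epsilon$, the space $V(\alpha,\epsilon)\cap{\mathfrak p}_2$ is nonzero for such $\alpha$, so each summand vanishes exactly when the condition in (2) fails for it.

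\textbf{Equivalence of (2) and (3).} The implication (3)$\Rightarrow$(2) is trivial. For (2)$\Rightarrow$(3), take $\alpha\in\Sigma_\epsilon$ with $\alpha$ negative. By (\ref{eqn:alphep}), $\overline{\mathfrak g(\mathfrak a,\alpha,\epsilon)}=\mathfrak g(\mathfrak a,-\alpha,\epsilon^{-1})$, so $-\alpha\in\Sigma^+_{\epsilon^{-1}}$. Moreover $\varphi_{\epsilon^{-1}}\equiv-\varphi_\epsilon \pmod{\pi}$, since $\epsilon^{-1}=e^{-2\sqrt{-1}\varphi_\epsilon}$ and $\varphi$ is chosen in $(-\pi/2,\pi/2]$; the only subtlety is $\epsilon=-1$, where $\varphi_\epsilon=\pi/2$ and $-\pi/2\equiv\pi/2$ mod $\pi$. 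Hence $\langle\alpha,H\rangle+\varphi_\epsilon=-(\langle-\alpha,H\rangle+\varphi_{\epsilon^{-1}})$ modulo $\pi$, and the two conditions agree.

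\textbf{Main obstacle.} The only step needing care is justifying that the intersection distributes over the blocks. I would handle it by noting that the $\theta_2$- and $\mathrm{Ad}(a)\theta_1\mathrm{Ad}(a)^{-1}$-invariant orthogonal decomposition (\ref{eqn:frakg}) has each block preserved by both involutions. Then any $X\in\mathrm{Ad}(a){\mathfrak p}_1\cap{\mathfrak p}_2$ has its block components fixed by both $-\theta_2$ and $-\mathrm{Ad}(a)\theta_1\mathrm{Ad}(a)^{-1}$, so each component lies in the intersection.
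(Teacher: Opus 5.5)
Your proof is correct and follows the paper's argument. Like the paper, you decompose via (\ref{eqn:m1}), use that $\mathrm{Ad}(a)$ is trivial on $\mathfrak g(0)\cap\mathfrak g$ and preserves each $V(\alpha,\epsilon)$, apply Lemma~\ref{fact:1} blockwise, identify $\mathfrak g(0)\cap\mathfrak p_1\cap\mathfrak p_2=\mathfrak a$, and obtain (2)$\Leftrightarrow$(3) from $-\alpha\in\Sigma^+_{\bar\epsilon}$ and $\varphi_\epsilon+\varphi_{\bar\epsilon}\in\pi\mathbb Z$; your extra justifications (the blockwise intersection, the maximality argument, and $m(\alpha,\epsilon)\ge 1$) fill in steps the paper leaves implicit.

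One wording slip in the (1)$\Leftrightarrow$(2) paragraph: the piece $V(\alpha,\epsilon)\cap\mathrm{Ad}(a)\mathfrak p_1\cap\mathfrak p_2$ vanishes exactly when the condition in (2) \emph{holds} for $(\alpha,\epsilon)$, not when it fails. The surrounding logic is nonetheless right.
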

\begin{proof}
By using (\ref{eqn:m1}), Lemma~\ref{fact:1} and the fact that $\mathrm{Ad}(a)$ acts on 
$\mathfrak{g}(0)\cap {\mathfrak p}_2$ trivially, we have 
\[
\mathrm{Ad}(a){\mathfrak p}_1\cap {\mathfrak p}_2
=(\mathfrak{g}(0)\cap {\mathfrak p}_1\cap {\mathfrak p}_2)
\oplus \sum_{\epsilon\in U(1)}
\sum_{\substack{\alpha\in \Sigma_\epsilon^+\\ \langle\alpha,H\rangle +\varphi_\epsilon\in\pi
\mathbb{Z}}}(V(\alpha,\epsilon)\cap {\mathfrak p}_2).
\]
Since $\mathfrak{g}(0)\cap {\mathfrak p}_1\cap {\mathfrak p}_2$ equals $\mathfrak{a}$, we get (\ref{eqn:decomp}), which yields that 
(1) and (2) are equivalent. 
It follows from $\Sigma_\epsilon^+\subset\Sigma_\epsilon$ that $(3)\Rightarrow (2)$. 
In order to prove $(2)\Rightarrow(3)$, take $\alpha$ in $\Sigma_\epsilon-\Sigma_\epsilon^+$. 
Since $-\alpha$ is in $\Sigma^+_{\bar{\epsilon}}$, we have 
$-\langle\alpha,H\rangle+\varphi_{\bar{\epsilon}}\not\in\pi\mathbb{Z}$. 
Since $\varphi_\epsilon+\varphi_{\bar{\epsilon}}\in\pi\mathbb{Z}$, we have
$\langle\alpha,H\rangle+\varphi_\epsilon\not\in\pi\mathbb{Z}$, 
which implies the assertion. 
\end{proof}

By Lemma~\ref{fact:2}, ${\mathfrak a}_r$ is an open dense subset of $\mathfrak{a}$.

\begin{lemma}\label{lem:Weyl}
The following relation holds:
\begin{equation}
W(\tilde{\Sigma})\subset 
\{\mathrm{Ad}(g)|_{\mathfrak a}\mid \mathrm{Ad}(g)(\mathfrak{a})=\mathfrak{a},\; g\in G\}.
\label{eqn:Weyl}
\end{equation}
\end{lemma}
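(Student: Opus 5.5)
Since $W(\tilde{\Sigma})$ is generated by the reflections $s_\alpha$ ($\alpha\in\tilde{\Sigma}$), it suffices to find, for each $\alpha\in\tilde{\Sigma}$, an element $g\in G$ with $\mathrm{Ad}(g)\mathfrak{a}=\mathfrak{a}$ and $\mathrm{Ad}(g)|_{\mathfrak a}=s_\alpha$. The right-hand side of (\ref{eqn:Weyl}) is a group, so the statement then follows.

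To construct $g$ we work in the compact symmetric pair $(G_{12},G_{12}\cap F(\theta_1,G)\cap F(\theta_2,G))$ introduced in the proof of the preceding lemma. Its canonical decomposition is $\mathfrak{g}_{12}=(\mathfrak{k}_1\cap\mathfrak{k}_2)\oplus(\mathfrak{p}_1\cap\mathfrak{p}_2)$, and $\mathfrak{a}$ is a maximal abelian subspace of $\mathfrak{p}_1\cap\mathfrak{p}_2$. This does not capture all of $\tilde{\Sigma}$, because $\mathfrak{g}_{12}$ only sees the $\epsilon=1$ part of $\mathfrak{g}(\mathfrak{a},\alpha)$. So we instead build $g$ directly, in the style of Lemma~\ref{lem:3}. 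Take $\alpha\in\tilde{\Sigma}$; replacing $\alpha$ by $-\alpha$ if necessary, we have $\alpha\in\Sigma^+_\epsilon$ for some $\epsilon$.

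In the finite-order case, Lemma~\ref{lem:3} gives $X=X^\epsilon_{\alpha,1}\in V(\alpha,\epsilon)\cap\mathfrak{k}_1\subset\mathfrak{g}$. It satisfies $[H,X]=\langle\alpha,H\rangle Y$, $[H,Y]=-\langle\alpha,H\rangle X$ and $[X,Y]=\alpha$. Hence $\mathrm{span}\{X,Y,\alpha\}$ is a copy of $\mathfrak{su}(2)$ and $\mathrm{ad}(X)$ kills $\ker\alpha\cap\mathfrak a$. Put $g=\exp(tX)$ with $t=\pi/|\alpha|$. Then $\mathrm{Ad}(g)$ fixes $\ker\alpha$ pointwise and rotates the plane $\mathrm{span}\{\alpha,Y\}$ by angle $t|\alpha|=\pi$, so $\mathrm{Ad}(g)\alpha=-\alpha$. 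Here we use $\mathrm{ad}(X)\alpha=-|\alpha|^2Y$ and $\mathrm{ad}(X)Y=\alpha$, which follow from the bracket relations and invariance of the metric. Therefore $\mathrm{Ad}(g)\mathfrak{a}=\mathfrak{a}$ and $\mathrm{Ad}(g)|_{\mathfrak a}=s_\alpha$.

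For the general case, where $\theta_1\theta_2$ need not have finite order, we avoid Lemma~\ref{lem:3}. Take any nonzero $Z\in\mathfrak{g}(\mathfrak{a},\alpha)$ and set $X=Z+\bar Z\in\mathfrak g$. Then $[H,X]=\langle\alpha,H\rangle Y$ with $Y=\sqrt{-1}(Z-\bar Z)\in\mathfrak g$, and $[X,Y]$ commutes with $\mathfrak a$. Using $\langle[X,Y],H\rangle=\langle X,[Y,H]\rangle$, its component in $\mathfrak a$ is a positive multiple of $\alpha$.

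The main obstacle is that $[X,Y]$ may also have a component in $\mathfrak{g}(0)\cap\mathfrak g$ orthogonal to $\mathfrak{a}$. Then $\{X,Y,[X,Y]\}$ need not close up into an $\mathfrak{su}(2)$ acting correctly on $\mathfrak a$. To handle this, we would use the $\theta_1\theta_2$-invariance and choose $Z$ as an eigenvector of $\theta_1\theta_2$ on $\mathfrak{g}(\mathfrak a,\alpha)$, which is possible since it is a unitary operator. This mimics Ohno's construction and should force $[X,Y]\in\mathbb{R}\alpha$. If it does not, we fall back on the following rank-one argument. Let $\mathfrak a_\alpha=\ker\alpha$. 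The centralizer $Z_G(\mathfrak a_\alpha)_0$ is a compact group whose Lie algebra is $\theta_1,\theta_2$-stable. Inside it, $\mathfrak a$ is maximal abelian in the intersection of the $\mathfrak{p}$'s, and it has restricted roots $\pm\alpha$ (and possibly $\pm2\alpha$). In that group, apply Hermann's decomposition $G=K_2AK_1$ to see that $\mathrm{Ad}$ of the group realizes $-1$ on $\mathbb R\alpha$. Any such element fixes $\mathfrak a_\alpha$ and so gives $s_\alpha$.
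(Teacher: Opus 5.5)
Your finite-order argument is the paper's proof. The paper exponentiates $\tilde X^\epsilon_{\alpha,i}\in V(\alpha,\epsilon)\cap\mathfrak{k}_2$ from Lemma~\ref{lem:3} and takes $t$ with $\cos(t\|\alpha\|)=-1$. You use $X^\epsilon_{\alpha,1}\in V(\alpha,\epsilon)\cap\mathfrak{k}_1$ instead, which changes nothing. The lemma sits under the paper's standing hypothesis $\mathrm{ord}(\theta_1\theta_2)<\infty$, since it relies on Lemma~\ref{lem:3}. So that part of your proposal already proves everything the paper proves here.

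Your extension to arbitrary order, however, has a gap as written.

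\textbf{The eigenvector choice is not enough.} Choosing $Z$ merely as a $\theta_1\theta_2$-eigenvector does not force $[X,Y]\in\mathbb{R}\alpha$. For $\theta_1=\theta_2$ every $Z$ is an eigenvector. Take $\mathbb{C}P^2=SU(3)/S(U(1)\times U(2))$, where $\alpha$ has multiplicity $2$ and $Z_{\mathfrak{k}_1}(\mathfrak{a})\cong\mathfrak{u}(1)$ rotates the $\alpha$-root space. Set $Z=(X^1_{\alpha,1}-\sqrt{-1}Y^1_{\alpha,1})+\sqrt{-1}(X^1_{\alpha,2}-\sqrt{-1}Y^1_{\alpha,2})$, with the vectors as in Lemma~\ref{lem:3}. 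Then $X=Z+\bar Z=2(X^1_{\alpha,1}+Y^1_{\alpha,2})\notin\mathfrak{k}_1$, and $[X,Y]$ has a nonzero component in $Z_{\mathfrak{k}_1}(\mathfrak{a})$.

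\textbf{The fallback does not close the gap.} Writing $Z_G(\ker\alpha)_0=K_2AK_1$ via Hermann does not by itself produce an element acting by $-1$ on $\mathbb{R}\alpha$. That is just the rank-one case of the lemma you are trying to prove.

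\textbf{The fix is a reality condition.} The map $Z\mapsto\overline{\theta_1 Z}$ is a conjugate-linear involution of $\mathfrak{g}(\mathfrak{a},\alpha,\epsilon)$. This follows from (\ref{eqn:alphep}), which holds without any finiteness assumption because $\theta_1\theta_2$ is unitary. So you can pick $Z\neq 0$ in $\mathfrak{g}(\mathfrak{a},\alpha,\epsilon)$ with $\theta_1 Z=\bar Z$. Then:
\begin{enumerate}
\item[(a)] $X\in\mathfrak{k}_1$ and $Y\in\mathfrak{p}_1$, so $[X,Y]\in\mathfrak{p}_1$.
\item[(b)] $[X,Y]=-2\sqrt{-1}[Z,\bar Z]$ is $\theta_1\theta_2$-fixed, so it also lies in $\mathfrak{p}_2$.
\item[(c)] Hence $[X,Y]\in\mathfrak{g}(0)\cap\mathfrak{p}_1\cap\mathfrak{p}_2=\mathfrak{a}$.
\item[(d)] Your inner-product computation then gives $[X,Y]=\|X\|^2\alpha$.
\end{enumerate}
After normalizing $X$, your $\mathfrak{su}(2)$ rotation argument goes through verbatim, with no finiteness assumption on $\theta_1\theta_2$.
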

\begin{proof}
By Lemma~\ref{lem:3}, for any $X\in\mathfrak{a}$, we have 
\begin{equation}
\label{eqn:weyl}
\mathrm{Ad}(\exp t\tilde{X}_{\alpha,i}^\epsilon)X
=X+(\cos (t\|\alpha\|)-1)\frac{\langle\alpha,X\rangle}{\|\alpha\|^2}\alpha
-\sin (t\|\alpha\|)\frac{\langle\alpha,X\rangle}{\|\alpha\|^2}\tilde{Y}_{\alpha,i}^\epsilon.
\end{equation}
Take $t\in\mathbb{R}$ with $\cos (t\|\alpha\|)=-1$. 
Then $\mathrm{Ad}(\exp t\tilde{X}_{\alpha,i}^\epsilon)X=X-2\frac{\langle\alpha,X\rangle}{\|\alpha\|^2}\alpha$. 
Thus we get the assertion.
\end{proof}

\begin{definition}\rm \label{dfn:canonical}\cite[Definition~6.1]{BI}
A compact symmetric triad $(G,\theta_1,\theta_2)$ is said to be {\it canonical}, if there exists a 
maximal abelian subalgebra $\mathfrak{t}$ of $\mathfrak{g}$ which satisfies the following conditions (C1) and (C2): 
\begin{enumerate}
\item[(C1)]  $\mathfrak{t}$ is {\it quasi-canonical} with respect to $(G,\theta_1,\theta_2)$, that is, 
\begin{enumerate}
\item[(i)] $\mathfrak{a}_i:=\mathfrak{t}\cap {\mathfrak p}_i$ is a maximal abelian subspace of ${\mathfrak p}_i$ for $i=1,2$.
In particular, $[{\mathfrak a}_1,{\mathfrak a}_2]=\{0\}$ holds, and $\mathfrak{t}$ is $\theta_i$-invariant.
\item[(ii)] Let $\Delta$ denote the root system of $\mathfrak{g}$ with respect to $\mathfrak{t}$. 
Then there exists a linear ordering $<$ on $\mathfrak{t}$ such that $-\theta_i(\alpha)>0$ holds for 
$\alpha\in \Delta^+-\Delta^i_0$ ($i=1,2$), where $\Delta^+$ denotes the set of positive roots in $\Delta$ 
with respect to $<$, and $\Delta_0^i=\Delta \cap {\mathfrak k}_i$. 
\end{enumerate}
\item[(C2)] $\mathrm{ord}(\theta_1\theta_2)
=\mathrm{ord}(\theta_1\theta_2|\mathfrak{t})<\infty$. 
\end{enumerate}
Then, $\mathfrak{t}$ is said to be {\it canonical} with respect to $(G,\theta_1,\theta_2)$. 
\end{definition}

\begin{theorem} \cite[Theorem~6.12, (1)]{BI}\label{thm:BI}
Let $\mathfrak{t}$ be a canonical maximal abelian subalgebra of $\mathfrak{g}$ with respect to a 
canonical compact symmetric triad $(G,\theta_1,\theta_2)$. 
Then, $\mathfrak{a}:=\mathfrak{t}\cap (\mathfrak{p}_1\cap \mathfrak{p}_2)$ is a maximal abelian subspace 
of $\mathfrak{p}_1\cap \mathfrak{p}_2$. 
\end{theorem}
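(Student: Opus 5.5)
Let $\mathfrak{t}$ be canonical with respect to $(G,\theta_1,\theta_2)$ and set $\mathfrak{a}:=\mathfrak{t}\cap(\mathfrak{p}_1\cap\mathfrak{p}_2)$. The plan is to take any $X\in\mathfrak{p}_1\cap\mathfrak{p}_2$ with $[X,\mathfrak{a}]=\{0\}$ and show $X\in\mathfrak{a}$. Abelianness of $\mathfrak{a}$ is clear since $\mathfrak{a}\subset\mathfrak{t}$, so this is exactly the maximality to be proved. The strategy is to first pin down $\mathfrak{t}\cap\mathfrak{p}_1\cap\mathfrak{p}_2$ using (C1), and then use (C2) to show that the centralizer of $\mathfrak{a}$ in $\mathfrak{p}_1\cap\mathfrak{p}_2$ contributes nothing outside $\mathfrak{t}$.

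\textbf{Step 1.} By (C1)(i), $\mathfrak{t}$ is $\theta_1$- and $\theta_2$-invariant, so $\mathfrak{t}=(\mathfrak{t}\cap\mathfrak{k}_i)\oplus\mathfrak{a}_i$. Since $\theta_1\theta_2|_{\mathfrak{t}}$ has finite order by (C2), the operators $\theta_1|_{\mathfrak{t}}$ and $\theta_2|_{\mathfrak{t}}$ generate a finite dihedral group acting on $\mathfrak{t}$. Then $\mathfrak{a}=\mathfrak{a}_1\cap\mathfrak{a}_2$ is the common $(-1)$-eigenspace.

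\textbf{Step 2.} Decompose $X$ along the root space decomposition of $\mathfrak{g}^{\mathbb C}$ with respect to $\mathfrak{t}$:
\[
X=X_0+\sum_{\beta\in\Delta}X_\beta,\qquad X_0\in\mathfrak{t}^{\mathbb C}.
\]
The condition $[X,\mathfrak{a}]=0$ forces $X_\beta=0$ unless $\beta\perp\mathfrak{a}$. Since $\theta_1$, $\theta_2$ and conjugation preserve $\mathfrak{t}^{\mathbb C}$, we get $X_0\in\mathfrak{t}\cap\mathfrak{p}_1\cap\mathfrak{p}_2=\mathfrak{a}$. So it suffices to show that the part $X'=\sum_{\beta\perp\mathfrak{a}}X_\beta$, which still lies in $\mathfrak{p}_1\cap\mathfrak{p}_2$, vanishes. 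The roots $\beta\perp\mathfrak{a}$ are those whose projection onto the $(-1)$-common eigenspace is zero. Since $\theta_i$ maps $\mathfrak{g}_\beta$ to $\mathfrak{g}_{\theta_i\beta}$, the vector $X'$ is supported on a union of orbits of the dihedral group $\langle\theta_1,\theta_2\rangle$ acting on $\Delta_{\mathfrak{a}}:=\{\beta\in\Delta\mid\beta\perp\mathfrak{a}\}$.

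\textbf{Step 3 (the main obstacle).} Show that the centralizer $\mathfrak{z}$ of $\mathfrak{a}$ in $\mathfrak{g}$ has $\mathfrak{z}\cap\mathfrak{p}_1\cap\mathfrak{p}_2=\mathfrak{a}$. Both $\theta_1$ and $\theta_2$ preserve $\mathfrak{z}$, which is reductive with maximal torus $\mathfrak{t}$. I would argue by contradiction: if $0\neq X'\in\mathfrak{z}\cap\mathfrak{p}_1\cap\mathfrak{p}_2$, then a maximal abelian subspace of $\mathfrak{z}\cap\mathfrak{p}_1\cap\mathfrak{p}_2$ containing $\mathfrak{a}$ would be strictly larger than $\mathfrak{a}$. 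To rule this out, I would use the ordering in (C1)(ii), under which $-\theta_i$ preserves $\Delta^+\setminus\Delta^i_0$, together with the finite-order condition (C2). The aim is to show that on the semisimple part of $\mathfrak{z}$, the pair $(\theta_1,\theta_2)$ restricted to $\mathfrak{t}$ leaves no nonzero common $(-1)$-eigenvector. Maximality of $\mathfrak{a}_i$ in $\mathfrak{p}_i$ says that $\mathfrak{t}$ contains a maximally split torus for each $\theta_i$, so $\mathfrak{a}_i$ is the whole $(-1)$-part of the Cartan for $\theta_i$. Compatibility of the orderings then should make $\mathfrak{t}\cap\mathfrak{p}_1\cap\mathfrak{p}_2$ a maximally split torus for the symmetric pair $(\mathfrak{g}_{12},\mathfrak{k}_1\cap\mathfrak{k}_2)$ of $\theta_1\theta_2$. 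This is where the Matsuki/Baba--Ikawa structure is used, and I expect this step to be delicate.

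As a fallback, I would compute $\dim(\mathfrak{p}_1\cap\mathfrak{p}_2)$ using the decomposition (\ref{eqn:m1}) after verifying that $\mathfrak{a}$ is maximal in $\mathfrak{g}(0)\cap\mathfrak{p}_1\cap\mathfrak{p}_2$. That dimension would then be compared with the rank of the pair $(G_{12},K_1\cap K_2)$, since Lemma~\ref{lem:tildeSigma} gives the rank. This would conclude that $\mathfrak{a}$ is maximal abelian in $\mathfrak{p}_1\cap\mathfrak{p}_2$.
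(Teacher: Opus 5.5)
Your Steps 1 and 2 are correct. Step 3, however, is the entire content of the theorem, and you do not carry it out. Saying that ``the pair leaves no nonzero common $(-1)$-eigenvector on the semisimple part of $\mathfrak{z}$'', or that $\mathfrak{t}\cap\mathfrak{p}_1\cap\mathfrak{p}_2$ ``should be maximally split for $(\mathfrak{g}_{12},\mathfrak{k}_1\cap\mathfrak{k}_2)$'', restates the conclusion rather than proving it. On the Cartan part $\mathfrak{t}\cap[\mathfrak{z},\mathfrak{z}]$ there is nothing to prove, since that space is orthogonal to $\mathfrak{a}$; the whole difficulty is the root-space part. Your fallback is circular. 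Proving that $\mathfrak{a}$ is maximal abelian in $\mathfrak{g}(0)\cap\mathfrak{p}_1\cap\mathfrak{p}_2$, which is the centralizer of $\mathfrak{a}$ in $\mathfrak{p}_1\cap\mathfrak{p}_2$, is equivalent to the theorem. Also, Lemma~\ref{lem:tildeSigma} already assumes a maximal abelian $\mathfrak{a}$ and says nothing about the rank of $(G_{12},K_1\cap K_2)$. The paper gives no proof here (it quotes Theorem~6.12 of Baba--Ikawa), so your proposal has to supply this step itself, and it does not.

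Your Step 2 setup is the right one, and two observations finish it.

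First, for $\beta\in\Delta^i_0$ (equivalently $\beta\perp\mathfrak{a}_i$), $\theta_i$ acts on $\mathfrak{g}_\beta$ by $+1$. Otherwise $(\mathfrak{g}_\beta\oplus\mathfrak{g}_{-\beta})\cap\mathfrak{g}$ would be a nonzero subspace of $\mathfrak{p}_i$ centralizing $\mathfrak{a}_i$, contradicting maximality in (C1)(i). Comparing components in $\theta_iX'=-X'$ gives $\theta_iX_\beta=-X_{\theta_i\beta}$, so $X_\beta=-X_\beta=0$ for $\beta\in\Delta^1_0\cup\Delta^2_0$. Hence the support $S$ of $X'$ is stable under $\theta_1$, $\theta_2$ and $-1$, and it misses $\Delta^1_0\cup\Delta^2_0$.

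Second, suppose $S\neq\emptyset$ and pick $\beta\in S\cap\Delta^+$. Apply $-\theta_1$ and $-\theta_2$ alternately. Each image stays in $S$, hence outside $\Delta^1_0\cup\Delta^2_0$, so (C1)(ii) keeps it positive; this is exactly the induction in the proof of Lemma~\ref{lem:okuda-baba}. Let $\sigma=\theta_1\theta_2$ and let $m$ be the order of $\sigma|_{\mathfrak{t}}$. You obtain $\sigma^k\beta>0$ and $-\sigma^k\theta_1\beta>0$ for all $k$. By \eqref{eqn:bar}, whose derivation only uses $\sigma^m=1$ on $\mathfrak{t}$,
\[
2m\,\bar{\beta}=\sum_{k=0}^{m-1}\bigl(\sigma^k\beta-\sigma^k\theta_1\beta\bigr)>0,
\]
which contradicts $\bar\beta=0$. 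So $X'=0$ and $X=X_0\in\mathfrak{a}$.

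This argument uses only (C1). The map $\sigma|_{\mathfrak{t}}$ has finite order automatically because it permutes $\Delta$, so (C2) plays no role here, contrary to what your Step 3 anticipates.
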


A liner ordering $<$ on $\mathfrak{t}$ induces a linear ordering $<$ on $\mathfrak{a}$. 
Let $\tilde{\Sigma}^+$ denote the set of positive roots in  $\tilde{\Sigma}$. 
We put 
\begin{align}
& \mathfrak{a}_+=\{X\in \mathfrak{a}\mid \langle \lambda,X\rangle \geq 0\quad (\lambda\in\tilde{\Sigma}^+)\},\label{eqn:a+}\\
& \mathfrak{t}_+=\{X\in \mathfrak{t}\mid \langle \alpha,X\rangle \geq 0\quad (\alpha\in\Delta^+)\}\label{eqn:t+}.
\end{align}

In the case when $(G,\theta_1,\theta_2)$ is canonical, we take $\mathfrak{t}$ and a linear ordering $<$ on $\mathfrak{t}$ 
as in Definition~\ref{dfn:canonical}.

\begin{lemma}\label{lem:okuda-baba}
Assume that a compact symmetric triad $(G,\theta_1,\theta_2)$ is canonical, and 
a maximal abelian subalgebra $\mathfrak{t}$ of $\mathfrak{g}$ is canonical with respect to $(G,\theta_1,\theta_2)$. 
Let $\;\bar{ }: \mathfrak{t}\rightarrow \mathfrak{a}$ denote the orthogonal projection. 
Then 
$\tilde{\Sigma}^+=\{\bar{\alpha}\mid \alpha\in\Delta^+\}-\{0\}$ holds. 
In particular, ${\mathfrak a}_+=\mathfrak{a}\cap {\mathfrak t}_+$ holds. 
\end{lemma}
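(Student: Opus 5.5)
The plan is to relate the $\mathfrak{a}$-weight decomposition of $\mathfrak{g}^{\mathbb C}$ to the $\mathfrak{t}$-root decomposition. Since $\mathfrak{a}\subset\mathfrak{t}$, every root space $\mathfrak{g}^{\mathbb C}_\alpha$ ($\alpha\in\Delta$) is an eigenspace of $\mathrm{ad}(H)$ for $H\in\mathfrak a$ with eigenvalue $\sqrt{-1}\langle\alpha,H\rangle=\sqrt{-1}\langle\bar\alpha,H\rangle$. Hence
\[
\mathfrak{g}(\mathfrak{a},\lambda)=\bigoplus_{\alpha\in\Delta\cup\{0\},\ \bar\alpha=\lambda}\mathfrak{g}^{\mathbb C}_\alpha ,
\]
where $\mathfrak{g}^{\mathbb C}_0=\mathfrak{t}^{\mathbb C}$. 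It follows that $\tilde\Sigma=\{\bar\alpha\mid\alpha\in\Delta\}-\{0\}$. This step uses only $\mathfrak a\subset\mathfrak t$, which holds by Theorem~\ref{thm:BI}.

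The positivity statement is where canonicity enters. The linear ordering on $\mathfrak a$ is the one induced from the ordering on $\mathfrak t$, and I will take it to be restriction of a lexicographic ordering built from a basis of $\mathfrak t$ whose first vectors span $\mathfrak a$. The point is to show that for $\alpha\in\Delta^+$ with $\bar\alpha\neq0$ we have $\bar\alpha>0$, i.e.\ $\bar\alpha\in\tilde\Sigma^+$. I would use condition (C1)(ii). Write $\sigma=\theta_1\theta_2|_{\mathfrak t}$, which has finite order $n$ by (C2). The projection onto $\mathfrak a$ should be expressible through averages of $\theta_1,\theta_2$: $\mathfrak a$ is the common $(-1)$-eigenspace of $\theta_1,\theta_2$ on $\mathfrak t$, so $\bar\alpha$ is obtained by averaging $\alpha$ over the finite group generated by $\theta_1|_{\mathfrak t},\theta_2|_{\mathfrak t}$, weighted by the sign character sending each $\theta_i$ to $-1$. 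That group is dihedral of order $2n$.

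The key claim is then that each term $\pm w\alpha$ in this average, for $w$ a word in $\theta_1,\theta_2$, is $\geq0$ modulo the terms that cancel. Iterating (C1)(ii), $-\theta_i$ maps $\Delta^+-\Delta^i_0$ into $\Delta^+$, and it fixes $\alpha\in\Delta^i_0$ up to sign. This part needs care, since $\theta_i$ fixes roots in $\Delta_0^i$, where $-\theta_i\alpha=-\alpha<0$. I expect this to be the main obstacle. The first thing I would try is to split $\alpha=\alpha_{\mathfrak a}+\alpha'$ and show that $\alpha-\theta_i\alpha$ has positive leading term in the relevant coordinates. As a fallback, I would cite the Okuda--Baba argument from \cite{BI}: for a quasi-canonical $\mathfrak t$, the ordering is compatible with both $\theta_i$, so the $(-1)$-projection preserves nonnegativity.

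Given positivity, $\{\bar\alpha\mid\alpha\in\Delta^+\}-\{0\}\subset\tilde\Sigma^+$. Conversely, each $\lambda\in\tilde\Sigma^+$ equals $\bar\alpha$ for some $\alpha\in\Delta$. If that $\alpha$ were negative, then $-\alpha\in\Delta^+$ would give $-\lambda=\overline{-\alpha}>0$, a contradiction; so $\alpha\in\Delta^+$, and the two sets are equal.

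For the last assertion, take $X\in\mathfrak a$. Then $\langle\alpha,X\rangle=\langle\bar\alpha,X\rangle$ for all $\alpha\in\Delta^+$. By the equality just proved, the condition $\langle\alpha,X\rangle\geq0$ for all $\alpha\in\Delta^+$ is equivalent to $\langle\lambda,X\rangle\ge0$ for all $\lambda\in\tilde\Sigma^+$; the roots with $\bar\alpha=0$ impose no condition. Therefore $\mathfrak a_+=\mathfrak a\cap\mathfrak t_+$.
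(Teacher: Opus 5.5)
\textbf{There is a genuine gap at the one step that carries the lemma.} The rest of your plan is fine and matches the paper: the identification $\tilde\Sigma=\{\bar\alpha\mid\alpha\in\Delta\}-\{0\}$, your signed dihedral average (exactly the paper's formula $\bar H=\frac{1}{2n}\sum_{k=0}^{n-1}\{(\theta_1\theta_2)^kH-(\theta_1\theta_2)^k\theta_1H\}$), the reverse inclusion by contradiction, and the deduction ${\mathfrak a}_+=\mathfrak a\cap{\mathfrak t}_+$.

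The gap is the positivity $\bar\alpha>0$ for $\alpha\in\Delta^+$ with $\bar\alpha\neq0$. You name the roots of $\Delta^i_0$ as the main obstacle and then only list things to try. Your fallback to \cite{BI} (``the projection preserves nonnegativity'') is just the claim itself.

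The missing observation is that this obstacle never arises.
\begin{itemize}
\item A root $\beta\in\Delta^i_0$ is fixed by $\theta_i$, so it is orthogonal to $\mathfrak t\cap\mathfrak p_i\supset\mathfrak a$, and hence $\bar\beta=0$.
\item Therefore $\bar\alpha\neq0$ forces $\alpha\notin\Delta^1_0\cup\Delta^2_0$, and (C1)(ii) gives $-\theta_i\alpha\in\Delta^+$.
\item Since $\theta_i$ preserves $\mathfrak t$ and acts by $-1$ on $\mathfrak a$, it commutes with the orthogonal projection. Hence $\overline{-\theta_i\beta}=\bar\beta$ for every $\beta$.
\item Consequently every iterate $(-\theta_{i_k})\cdots(-\theta_{i_1})\alpha$ has the same nonzero projection $\bar\alpha$. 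It again lies outside $\Delta^1_0\cup\Delta^2_0$, so by induction it is a positive root.
\end{itemize}
So all $2n$ terms $(\theta_1\theta_2)^k\alpha$ and $-(\theta_1\theta_2)^k\theta_1\alpha=(\theta_1\theta_2)^k(-\theta_1)\alpha$ of your average are positive roots, and there is nothing to cancel. Then $\bar\alpha$, a positive multiple of their sum, is positive in the ordering of $\mathfrak t$, hence in the induced ordering of $\mathfrak a$.

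\textbf{The ordering is not yours to choose.} In the lemma, $\Delta^+$ is the positive system of the linear ordering in (C1)(ii), and $\tilde\Sigma^+$ is defined by restricting that ordering to $\mathfrak a$. Taking instead a lexicographic ordering with an $\mathfrak a$-basis first would make positivity automatic: for $\bar\alpha\neq0$ you would get $\alpha>0\iff\bar\alpha>0$, and (C1)(ii) would play no role. But that covers only one particular ordering, and you give no reason it satisfies (C1)(ii). The lemma must hold for the given canonical ordering.

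Your plan is also internally inconsistent on this point: after fixing the lexicographic ordering you never use it, and instead argue from (C1)(ii), which concerns the given ordering. Incidentally, the lexicographic choice would still yield ${\mathfrak a}_+\subset{\mathfrak t}_+$ for the corresponding chambers, which is all Lemma~\ref{lem:B} uses. It is not, however, the statement to be proved here.
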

\begin{proof}
It is clear that $\tilde{\Sigma}=\{\bar{\alpha}\mid \alpha\in\Delta\}-\{0\}$. 
Since $\mathfrak{t}$ is canonical, $\mathfrak{t}$ is $\theta_i$-invariant ($i=1,2$). 
Define the subspace ${\mathfrak t}^{\theta_1\theta_2}$ of $\mathfrak{t}$ by
\[
{\mathfrak t}^{\theta_1\theta_2}
=\{H\in\mathfrak{t}\mid \theta_1\theta_2(H)=H\}
=\{H\in\mathfrak{t}\mid \theta_1(H)=\theta_2(H)\}.
\]
Then $\mathfrak{a}$ is a subspace of ${\mathfrak t}^{\theta_1\theta_2}$. 
Let $\pi_1:\mathfrak{t}\rightarrow {\mathfrak t}^{\theta_1\theta_2}$ denote the orthogonal projection. 
Set $n:=\mathrm{ord}(\theta_1\theta_2)(<\infty)$. 
We show that 
\begin{equation}
\label{eqn:proj}
\pi_1(H)=\frac{1}{n}\sum_{k=0}^{n-1} (\theta_1\theta_2)^k (H)
\in {\mathfrak t}^{\theta_1\theta_2}.
\end{equation}
In fact, we have
\begin{align*}
&\langle H-\frac{1}{n}\sum_{k=0}^{n-1} (\theta_1\theta_2)^k (H), 
\frac{1}{n}\sum_{l=0}^{n-1} (\theta_1\theta_2)^l (H)\rangle \\
&=\frac{1}{n}\langle H,\sum_{l=0}^{n-1} (\theta_1\theta_2)^l (H)\rangle 
-\frac{1}{n^2}\sum_{k, l=0}^{n-1}\langle (\theta_1\theta_2)^k(H),(\theta_1\theta_2)^l(H)\rangle .
\end{align*}
Here, since
\begin{align*}
\sum_{k,l=0}^{n-1}\langle (\theta_1\theta_2)^k(H),(\theta_1\theta_2)^l(H)\rangle 
&=\sum_{k,l=0}^{n-1}\langle (\theta_1\theta_2)^k(H),(\theta_1\theta_2)^{n-l}(H)\rangle \\
&=\sum_{k,l=0}^{n-1}\langle (\theta_1\theta_2)^{k+l}(H),H\rangle \\
&=n \sum_{p=0}^{n-1}\langle (\theta_1\theta_2)^p (H),H\rangle ,
\end{align*}
we get 
\[
\langle H-\frac{1}{n}\sum_{k=0}^{n-1} (\theta_1\theta_2)^k (H), 
\frac{1}{n}\sum_{l=0}^{n-1} (\theta_1\theta_2)^l (H)\rangle =0.
\]
Thus we get (\ref{eqn:proj}). 
Similarly, we have 
\begin{equation}
\label{eqn:proj2}
\pi_1(H)=\frac{1}{n}\sum_{k=0}^{n-1} (\theta_2\theta_1)^k (H).
\end{equation}
Let $\pi_2:{\mathfrak t}^{\theta_1\theta_2}\rightarrow\mathfrak{a}$ denote the orthogonal projection. 
Then, for $H\in {\mathfrak t}^{\theta_1\theta_2}$, we have 
\begin{equation}
\label{eqn:p2}
\pi_2(H)=\frac{1}{2}(H-\theta_1H)=\frac{1}{2}(H-\theta_2 H). 
\end{equation}
By using (\ref{eqn:proj}), (\ref{eqn:proj2}) and (\ref{eqn:p2}), for $H\in \mathfrak{t}$ we have
\begin{align}
\bar{H}&=\pi_2\pi_1(H)
=\frac{1}{2n}\sum_{k=0}^{n-1}\{(\theta_1\theta_2)^k(H) 
-\theta_1(\theta_1\theta_2)^k(H)\}\nonumber\\
&=\frac{1}{2n}\sum_{k=0}^{n-1}\{(\theta_1\theta_2)^k(H) 
-(\theta_1\theta_2)^k\theta_1(H)\}.\label{eqn:bar}
\end{align}
Take $\alpha\in\Delta^+$ with $\bar{\alpha}\not=0$. 
Since $\alpha\not\in \Delta_0^1$ and $\alpha\not\in\Delta_0^2$, 
we have $-\theta_i(\alpha)\in \Delta^+$ by Definition~\ref{dfn:canonical}, (ii). 
Since $\overline{-\theta_i(\alpha)}=\bar{\alpha}\not=0$, we have 
$\theta_j\theta_i(\alpha)=(-\theta_j)(-\theta_i)(\alpha)\in\Delta^+$. 
Thus, we have $(\theta_1\theta_2)^k(\alpha)>0$ and $-(\theta_1\theta_2)^k\theta_1(\alpha)>0$ by induction, which implies that 
$\bar{\alpha}>0$ by (\ref{eqn:bar}). 
Thus we get $\tilde{\Sigma}^+\supset \{\bar{\alpha}\mid \alpha\in\Delta^+\}-\{0\}$. 
Conversely, we take $\lambda\in\tilde{\Sigma}^{+}$.
There exists $\alpha\in\Delta$ such that $\lambda=\bar{\alpha}$ holds.
It is sufficient to show that $\alpha$ is in $\Delta^{+}$.
For contradiction, suppose that $\alpha$ is in $\Delta-\Delta^{+}$.
Then $-\alpha$ is in $\Delta^{+}$, and $\overline{-\alpha}\neq 0$ holds.
By the argument above, we obtain $\overline{-\alpha}>0$.
But this contradicts that $\lambda$ is positive.
Hence $\lambda$ is in $\{\bar{\alpha}\mid \alpha\in\Delta^+\}-\{0\}$.
From the above we get $\tilde{\Sigma}^+=\{\bar{\alpha}\mid \alpha\in\Delta^+\}-\{0\}$. 

By using this, we have 
\begin{align*}
\mathfrak{a}\cap {\mathfrak t}_+
&=\{X\in\mathfrak{a}\mid \langle\alpha,X\rangle \geq 0\quad (\alpha\in\Delta^+)\}\\
&=\{X\in\mathfrak{a}\mid 
\langle\bar{\alpha},X\rangle \geq 0\quad (\alpha\in\Delta^+)\}\\
&=\{X\in\mathfrak{a}\mid 
\langle\lambda,X\rangle \geq 0\quad (\lambda\in\tilde{\Sigma}^+)\}\\
&={\mathfrak a}_+.
\end{align*}
\end{proof}

The following theorem demonstrates the importance of canonical forms of compact symmetric triads. 

\begin{theorem} \label{thm:can}\cite[Theorem~6.6]{BI}
For any compact symmetric triad $(G,\theta_1,\theta_2)$, there exists a 
canonical compact symmetric triad $(G,\theta_1,\theta_2')$ which satisfies 
$(G,\theta_1,\theta_2')\sim $\linebreak$(G,\theta_1,\theta_2)$. 
\end{theorem}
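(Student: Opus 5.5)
The plan is to reach a canonical triad from $(G,\theta_1,\theta_2)$ in three successive modifications of $\theta_2$ only. Each modification has the form $\theta_2\mapsto\tau_g\theta_2\tau_g^{-1}$ with $g\in G$ and $\theta_1$ left untouched. Such a modification stays in the same equivalence class: in Definition~\ref{dfn:Matsuki} take $\varphi=\tau_g$ and $\tau=\tau_g^{-1}$. This gives $\theta_1'=\tau_g^{-1}\tau_g\theta_1\tau_g^{-1}\tau_g=\theta_1$ and $\theta_2'=\tau_g\theta_2\tau_g^{-1}$. So it suffices to produce a suitable $g$ at each stage.

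\emph{Step 1 (condition (C1)(i)).} Choose a maximal abelian subspace $\mathfrak{a}_1\subset\mathfrak{p}_1$ and extend it to a $\theta_1$-invariant maximal abelian subalgebra $\mathfrak{t}$; this is standard for a compact symmetric pair. Similarly choose a maximal abelian subspace $\mathfrak{a}_2\subset\mathfrak{p}_2$ and a $\theta_2$-invariant maximal abelian subalgebra $\mathfrak{t}'\supset\mathfrak{a}_2$. Maximal abelian subalgebras are conjugate, so $\mathfrak{t}'=\mathrm{Ad}(g)\mathfrak{t}$ for some $g\in G$. Replacing $\theta_2$ by $\tau_g^{-1}\theta_2\tau_g$ makes $\mathfrak{t}$ invariant under both involutions. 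Moreover $\mathfrak{t}\cap\mathfrak{p}_2'=\mathrm{Ad}(g)^{-1}\mathfrak{a}_2$ is then maximal abelian in $\mathfrak{p}_2'$.

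\emph{Step 2 (condition (C1)(ii)).} Condition (ii) for a single involution $\theta_i$ says that the ordering is a $\theta_i$-order in the sense of Satake. Equivalently, the corresponding Weyl chamber $C$ of $\mathfrak{t}$ has an $\mathfrak{a}_i$-regular point in its closure, i.e. $\bar C\cap\mathfrak{a}_i$ contains an open subset of $\mathfrak{a}_i$. Fix a chamber $C_1$ adapted to $\theta_1$ and a chamber $C_2$ adapted to $\theta_2$. Write $C_2=wC_1$ with $w\in W(\mathfrak{t})$, and choose $n\in N_G(\mathfrak{t})$ representing $w^{-1}$. Replacing $\theta_2$ by $\tau_n\theta_2\tau_n^{-1}$ preserves the $\theta_2$-invariance of $\mathfrak{t}$ obtained in Step 1, since $n$ normalizes $\mathfrak{t}$. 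It replaces $\mathfrak{a}_2$ by $\mathrm{Ad}(n)\mathfrak{a}_2$, which is still maximal abelian, and it carries $C_2$ to $C_1$. Hence the ordering attached to $C_1$ satisfies (ii) for both involutions simultaneously.

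\emph{Step 3 (condition (C2)), the expected main obstacle.} Now $\sigma:=\theta_1\theta_2$ preserves $\mathfrak{t}$ and the positive system $\Delta^+$ up to the sign twists described in (ii). Its restriction $\sigma|\mathfrak{t}$ induces an automorphism of $\Delta$, so it has finite order $m$. It follows that $\sigma^m$ acts trivially on $\mathfrak{t}$, and so $\sigma^m=\tau_s$ for some $s\in T=\exp\mathfrak{t}$; this $s$ may have infinite order. To remove it, I would modify $\theta_2$ once more by $\tau_h$ with $h\in T$. This keeps $\mathfrak{t}$, $\mathfrak{a}_i$ and the ordering unchanged, because $\tau_h$ acts trivially on $\mathfrak{t}$ and on the roots. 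The required computation is the effect of this change on $\sigma^m$. Writing $\theta_2\tau_h\theta_2=\tau_{\theta_2(h)}$, one finds that $\sigma^m$ changes by $\tau$ of the element
\[
\prod_{k=0}^{m-1}\sigma^k\bigl(\theta_1(h)^{-1}h\bigr)\in T,
\]
up to ordering conventions. So the task reduces to showing that $s$ can be moved into a finite-order element by such a correction, i.e. that $s$ lies in the image of the map $h\mapsto\prod_k\sigma^k(\theta_1(h)^{-1}h)$ modulo torsion. I would first try to verify this at the Lie algebra level, $\mathfrak{t}\to\mathfrak{t}$, $H\mapsto\sum_k\sigma^k(H-\theta_1H)$, by decomposing $\mathfrak{t}$ into $\sigma$-eigenspaces. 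The delicate point is that $s$ must be shown to have no component along the directions where this map degenerates, which should follow from $\sigma^m$ commuting with $\theta_1$ and $\theta_2$. Once $\sigma^m$ has finite order, $\mathrm{ord}(\theta_1\theta_2)$ is finite, and it must then still be compared with $\mathrm{ord}(\theta_1\theta_2|\mathfrak{t})$; if needed, a further torsion element of $T$ fixes this equality.
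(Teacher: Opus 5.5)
The paper does not prove this statement; it quotes it from \cite[Theorem~6.6]{BI}, so your argument has to stand on its own. Steps 1 and 2 are fine. Conjugating $\theta_2$ by an inner automorphism is allowed by Definition~\ref{dfn:Matsuki}, exactly as you check with $\varphi=\tau_g$, $\tau=\tau_g^{-1}$. The choice of adapted chambers then gives a quasi-canonical $\mathfrak{t}$, i.e.\ (C1). The gap is Step 3. Condition (C2) is the real content of the theorem, and you leave it as a plan resting on an incorrect heuristic and an unjustified final claim.

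\emph{What the correction can reach.} With $\mathfrak{t}$ and the ordering fixed, conjugating $\theta_2$ by $\tau_h$, $h\in T$, multiplies $\sigma^m=\tau_s$ by an element of the subtorus $\exp\mathfrak{a}$, where $\mathfrak{a}=\mathfrak{t}\cap\mathfrak{p}_1\cap\mathfrak{p}_2$. Indeed, $H\mapsto\sum_k\sigma^k(\theta_1H-H)$ is $-2m$ times the orthogonal projection of $\mathfrak{a}_1$ onto $\mathfrak{t}^{\sigma}$. Since $\theta_1$ preserves $\mathfrak{t}^\sigma$, its image is $\mathfrak{t}^\sigma\cap\mathfrak{a}_1=\mathfrak{a}$. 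Torsion elements of $T$ therefore add nothing. Your scheme yields (C2) exactly when $s\in Z(G)\exp\mathfrak{a}$, and ``a further torsion element of $T$ fixes this equality'' is not an argument.

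\emph{The heuristic.} $\sigma^m$ does not commute with $\theta_i$; that would already mean $\sigma^{2m}=1$. The correct relation is $\theta_i\sigma^m\theta_i=\sigma^{-m}$, which gives $\theta_i(s)s\in Z(G)$. Hence the components of $\log s$ in $\mathfrak{t}\cap\mathfrak{k}_1$ and in $\mathfrak{t}\cap\mathfrak{k}_2$ are rational, and so is its component in $\mathfrak{a}^\perp=(\mathfrak{t}\cap\mathfrak{k}_1)+(\mathfrak{t}\cap\mathfrak{k}_2)$. This lets you reach $\mathrm{ord}(\theta_1\theta_2)<\infty$, but nothing more.

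\emph{The $\mathfrak{a}^\perp$-component need not vanish.} Take $G=SU(3)$, $\theta_1=\mathrm{Ad}(\mathrm{diag}(1,-1,-1))$, $\theta_2=\mathrm{Ad}(\mathrm{diag}(1,-1,1))$ and $\mathfrak{t}=\mathrm{span}\{E_{12}-E_{21},\sqrt{-1}\,\mathrm{diag}(1,1,-2)\}$, with $E_{ij}$ the matrix units. These satisfy (C1) with $m=1$. Here $\sigma=\tau_s$ with $s=\exp(\frac{\pi}{3}\sqrt{-1}\,\mathrm{diag}(1,1,-2))\notin Z(G)$, and $\log s$ lies in $\mathfrak{a}^\perp$. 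The correction still works, but only because $s\exp(\pi(E_{12}-E_{21}))$ is central.

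\emph{What is actually needed.} The real issue is whether the torsion part of $s$ lies in $Z(G)\exp\mathfrak{a}$. This is an integrality question. It does not follow from the relations $\theta_i(s)s\in Z(G)$ and $\sigma(s)s^{-1}\in Z(G)$ alone: there are pairs satisfying these relations with $s\notin Z(G)\exp\mathfrak{a}$, which then violate (C1)(i) for $\theta_2$. You must use (C1)(i) for both involutions, together with lattice information from the Satake diagrams or a Chevalley-basis normal form of $\theta_1,\theta_2$. None of this appears in your proposal. As written, it proves only that every triad is equivalent to one satisfying (C1) with $\mathrm{ord}(\theta_1\theta_2)<\infty$, and even that needs the corrected relation.

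Your reduction itself loses nothing. Pairs $(\mathfrak{t},C)$ adapted to $\theta_1$ form a single $F(\theta_1,G)$-orbit. From this one checks that any canonical $(G,\theta_1,\theta_2^c)$ in the class is, after conjugation by $F(\theta_1,G)$, a $T$-conjugate of the $\theta_2$ you obtain after Step 2. So the unproved claim in Step 3 is equivalent to \cite[Theorem~6.6]{BI} itself, not a routine verification.
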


\subsection{Proof}
\label{subsec:proof}

After some preparations, we give the proof of Theorem~\ref{thm:int}. 

\begin{lemma} \label{lem:finiteorder}
Assume that the order of $\theta_1\theta_2$ is finite. 
If $\mathrm{Ad}(a){\mathfrak p}_1\cap {\mathfrak p}_2$ is not abelian for some $a=\exp H$ with $H\in\mathfrak{a}$, 
then $\mathrm{Ad}(a)L_1\cap L_2$ is not discrete. 
\end{lemma}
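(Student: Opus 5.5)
The plan is to construct, explicitly, a one-parameter curve inside $\mathrm{Ad}(a)L_1\cap L_2$ through a suitable point. By (\ref{eqn:intersection}) applied to the pair $\mathrm{Ad}(a)\mathfrak{p}_1,\mathfrak{p}_2$, this set equals $M\cap(\mathrm{Ad}(a)\mathfrak{p}_1\cap\mathfrak{p}_2)$. Since $\mathrm{Ad}(a)L_1=\mathrm{Ad}(a)\mathrm{Ad}(K_1)x_0=\mathrm{Ad}(aK_1a^{-1})x_0$ and $a$ fixes $x_0\in\mathfrak{a}$, the group $\mathrm{Ad}(a)K_1a^{-1}$ stabilizes $\mathrm{Ad}(a)\mathfrak{p}_1$. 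Also $\mathfrak{a}$ is still maximal abelian in $\mathrm{Ad}(a)\mathfrak{p}_1\cap\mathfrak{p}_2$ (Lemma~\ref{lem:equiv}, (2)).

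First I use the decomposition (\ref{eqn:decomp}) of Lemma~\ref{fact:2}. If $\mathrm{Ad}(a)\mathfrak{p}_1\cap\mathfrak{p}_2$ is not abelian, it strictly contains $\mathfrak{a}$. So there exist $\epsilon$ and $\alpha\in\Sigma^+_\epsilon$ with $\langle\alpha,H\rangle+\varphi_\epsilon\in\pi\mathbb{Z}$, and then $V(\alpha,\epsilon)\cap\mathfrak{p}_2\subset\mathrm{Ad}(a)\mathfrak{p}_1\cap\mathfrak{p}_2$. By Lemma~\ref{fact:1}, in this case $\mathrm{Ad}(a)Y^\epsilon_{\alpha,i}=\pm\tilde{Y}^\epsilon_{\alpha,i}$. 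Applying $\mathrm{Ad}(a)$ to $X^\epsilon_{\alpha,i}$ and using Lemma~\ref{lem:3} similarly gives $\mathrm{Ad}(a)X^\epsilon_{\alpha,i}=\pm\tilde{X}^\epsilon_{\alpha,i}$. Hence $\tilde{X}:=\tilde{X}^\epsilon_{\alpha,1}$ lies in $\mathrm{Ad}(a)\mathfrak{k}_1\cap\mathfrak{k}_2$, so $\exp(t\tilde{X})\in \mathrm{Ad}(a)(F(\theta_1,G)_0)\cap F(\theta_2,G)_0$ for all $t$.

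Next I pick a base point $y\in\mathrm{Ad}(a)L_1\cap L_2$ with $\langle\alpha,y\rangle\neq 0$. The natural candidate is an element of $W(\mathfrak{t})x_0\cap\mathfrak{a}$, or simply $x_0$'s images under the reflections available from Lemma~\ref{lem:Weyl}. Since $\tilde{\Sigma}$ spans $\mathfrak{a}$ and, by Lemma~\ref{lem:tildeSigma}, (2), $W(\tilde{\Sigma})x_0$ spans $\mathfrak{a}$, some $w x_0$ satisfies $\langle\alpha,wx_0\rangle\neq0$. The reflections are realized by $\mathrm{Ad}(\exp t\tilde{X}^{\epsilon'}_{\beta,i})$, which lie in $K_2$ but not necessarily in $\mathrm{Ad}(a)K_1$. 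To keep $y$ in both real flag manifolds I will instead use that $wx_0\in M\cap\mathfrak{a}\subset M\cap(\mathrm{Ad}(a)\mathfrak{p}_1\cap\mathfrak{p}_2)$, which holds by (\ref{eqn:intersection}) since $\mathfrak{a}\subset\mathrm{Ad}(a)\mathfrak{p}_1\cap\mathfrak{p}_2$. So $y=wx_0$ works directly, with no group element needed.

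Finally I consider the curve $c(t)=\mathrm{Ad}(\exp t\tilde{X})y$. It lies in $M$, and by the conclusion of the previous-but-one step it stays in both $\mathrm{Ad}(a)\mathfrak{p}_1$ and $\mathfrak{p}_2$; hence it lies in $\mathrm{Ad}(a)L_1\cap L_2$. Formula (\ref{eqn:weyl}) shows that $c(t)$ has a component $-\sin(t\|\alpha\|)\frac{\langle\alpha,y\rangle}{\|\alpha\|^2}\tilde{Y}^\epsilon_{\alpha,1}$, which is nonconstant because $\langle\alpha,y\rangle\neq0$. Therefore the intersection contains a nonconstant continuous curve and is not discrete.

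The main obstacle is the verification that $\mathrm{Ad}(a)X^\epsilon_{\alpha,i}=\pm\tilde{X}^\epsilon_{\alpha,i}$, that is, the $\mathfrak{k}$-analogue of Lemma~\ref{fact:1}. I expect to obtain it from the rotation formulas of Lemma~\ref{lem:3}: $\mathrm{Ad}(a)X^\epsilon_{\alpha,i}=\mathrm{Ad}(\exp(H+H_\epsilon))\tilde{X}^\epsilon_{\alpha,i}$, which equals $\cos(\cdot)\tilde{X}+\sin(\cdot)\tilde{Y}$ with the sine vanishing. The finiteness of the order of $\theta_1\theta_2$ is used exactly to have this eigenspace machinery from \cite{Ohno}.
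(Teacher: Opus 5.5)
Your proposal is correct and follows the paper's proof: you choose the same base point $\sigma x_0$ with $\langle\alpha,\sigma x_0\rangle\neq 0$ (via Lemma~\ref{lem:tildeSigma}, (2)) and the same circle $\mathrm{Ad}(\exp t\tilde{X}^\epsilon_{\alpha,i})\,\sigma x_0$ inside $M\cap(\mathrm{Ad}(a)\mathfrak{p}_1\cap\mathfrak{p}_2)=\mathrm{Ad}(a)L_1\cap L_2$. The one difference is how you show the circle stays in $\mathrm{Ad}(a)\mathfrak{p}_1\cap\mathfrak{p}_2$: the paper reads it off the right-hand side of (\ref{eqn:weyl}) using (\ref{eqn:decomp}), while you prove $\tilde{X}^\epsilon_{\alpha,i}\in\mathrm{Ad}(a)\mathfrak{k}_1\cap\mathfrak{k}_2$ (your $\mathfrak{k}$-analogue of Lemma~\ref{fact:1} is valid), which is an equally sound and slightly stronger justification.
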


\begin{proof}
Since $\mathrm{Ad}(a)\mathfrak{p}_1\cap \mathfrak{p}_2$ is not abelian, 
there exist $\epsilon \in U(1)$ and $\alpha\in \Sigma^+_\epsilon$ with 
$\langle\alpha,H\rangle +\varphi_\epsilon \in\pi\mathbb{Z}$ such that 
$V(\alpha,\epsilon)\cap {\mathfrak p}_2\not=\{0\}$ by (\ref{eqn:decomp}). 
By (2) of Lemma~\ref{lem:tildeSigma},
$\mathfrak{a}$ is equal to $\mathrm{span}\{W(\tilde{\Sigma})x_0\}$.
Thus, there exists $\sigma \in W(\tilde{\Sigma})$ such that
$\langle\alpha ,\sigma x_0\rangle\not=0$.
Set $X:=\sigma x_0$. 
Then $X$ is in $\mathfrak{a}\cap M$ by Lemma~\ref{lem:Weyl} and $\langle\alpha ,X\rangle\not=0$.
For the $X$ above, the right-hand side of (\ref{eqn:weyl}) is an element of $\mathrm{Ad}(a)\mathfrak{p}_1\cap \mathfrak{p}_2$ by (\ref{eqn:decomp}). 
On the other hand, the left-hand side of (\ref{eqn:weyl}) is an element of $M$, and 
the both sides of (\ref{eqn:weyl}) is a curve of $\mathrm{Ad}(a)L_1\cap L_2$ by (\ref{eqn:intersection}). 
Since $\langle \alpha, X \rangle$ is not equal to $0$,
(\ref{eqn:weyl}) represents a circle in $\mathrm{Ad}(a)L_1 \cap L_2$,
and the intersection is not discrete.
\end{proof}

\begin{lemma} \label{lem:B}
Assume that $n:=\mathrm{ord}(\theta_1\theta_2)<\infty$ and 
$\mathfrak{a}_+\subset \mathfrak{t}_+$. 
Then $W(\mathfrak{t})x_0\cap \mathfrak{a}=W(\tilde{\Sigma})x_0$ holds for any $x_0\in\mathfrak{a}$. 
\end{lemma}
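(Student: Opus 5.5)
The inclusion $W(\tilde{\Sigma})x_0\subset W(\mathfrak{t})x_0\cap\mathfrak{a}$ is immediate. By Lemma~\ref{lem:Weyl}, every $\sigma\in W(\tilde{\Sigma})$ is of the form $\mathrm{Ad}(g)|_{\mathfrak a}$ with $\mathrm{Ad}(g)\mathfrak{a}=\mathfrak{a}$. Hence $\sigma x_0\in M\cap\mathfrak{a}$, and Lemma~\ref{lem:Mcapa} gives $M\cap\mathfrak{a}=W(\mathfrak{t})x_0\cap\mathfrak{a}$. The real content is therefore the reverse inclusion, and for this I plan to use fundamental domains (Weyl chambers) on both sides, combined with the hypothesis $\mathfrak{a}_+\subset\mathfrak{t}_+$.

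Take $y\in W(\mathfrak{t})x_0\cap\mathfrak{a}$. Since $\mathfrak{a}_+$ is a closed fundamental domain for $W(\tilde{\Sigma})$ acting on $\mathfrak{a}$ (by Lemma~\ref{lem:tildeSigma}, (1), $\tilde\Sigma$ is a root system), there are $\sigma\in W(\tilde{\Sigma})$ with $\sigma y\in\mathfrak{a}_+$ and $\tau\in W(\tilde{\Sigma})$ with $\tau x_0\in\mathfrak{a}_+$. By the first paragraph, $\sigma y$ and $\tau x_0$ both still lie in $W(\mathfrak{t})x_0$, because $\sigma$ and $\tau$ are realized by elements $\mathrm{Ad}(g)$, $g\in G$, preserving $\mathfrak a$, and $\mathrm{Ad}(G)x_0\cap\mathfrak{t}=W(\mathfrak{t})x_0$. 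Thus $\sigma y$ and $\tau x_0$ are two elements of the same $W(\mathfrak{t})$-orbit, and both lie in $\mathfrak{a}_+\subset\mathfrak{t}_+$.

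Now $\mathfrak{t}_+$ is a closed Weyl chamber of $\Delta$, so it meets each $W(\mathfrak{t})$-orbit in exactly one point. It follows that $\sigma y=\tau x_0$, and hence $y=\sigma^{-1}\tau x_0\in W(\tilde{\Sigma})x_0$, which proves the reverse inclusion.

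The only delicate points are bookkeeping. First, the ordering on $\mathfrak{a}$ used to define $\tilde{\Sigma}^+$, and hence $\mathfrak{a}_+$ in (\ref{eqn:a+}), must make $\mathfrak{a}_+$ a genuine closed Weyl chamber of $\tilde{\Sigma}$; this holds because it is induced by a linear ordering. Second, the inclusion $\mathfrak{a}_+\subset\mathfrak{t}_+$ is exactly the hypothesis, and in the canonical case it is supplied by Lemma~\ref{lem:okuda-baba}. I expect the main obstacle to be only making sure that the transfer $W(\tilde\Sigma)\subset\{\mathrm{Ad}(g)|_{\mathfrak a}\}$ indeed keeps points inside $W(\mathfrak{t})x_0$. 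The finiteness of $n$ enters only through Lemma~\ref{lem:Weyl}, via Lemma~\ref{lem:3}.
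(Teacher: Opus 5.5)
Your proof is correct and follows the paper's argument: the paper also uses Lemmas~\ref{lem:Mcapa} and \ref{lem:Weyl} to get $W(\tilde{\Sigma})x_0\subset M\cap\mathfrak{a}$ and the $W(\tilde{\Sigma})$-invariance of $M\cap\mathfrak{a}$. It then concludes with the count $1=\#(M\cap\mathfrak{t}_+)\geq\#(M\cap\mathfrak{a}_+)\geq 1$ using $\mathfrak{a}_+\subset\mathfrak{t}_+$, which is the orbit-counting version of your pointwise step of moving $y$ and $x_0$ into $\mathfrak{a}_+$ and comparing them in $\mathfrak{t}_+$.
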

\begin{proof}
We have
$
W(\tilde{\Sigma})x_0\subset M\cap\mathfrak{a}=W(\mathfrak{t})x_0\cap\mathfrak{a}
$ by using Lemmas~\ref{lem:Mcapa} and \ref{lem:Weyl}.
By Lemma~\ref{lem:Weyl}, $M\cap \mathfrak{a}$ is invariant under the action of $W(\tilde{\Sigma})$. 
It suffices to show that $M\cap \mathfrak{a}$ is a single $W(\tilde{\Sigma})$-orbit. 
Note that ${\mathfrak a}_+$ and ${\mathfrak t}_+$ defined by (\ref{eqn:a+}) and (\ref{eqn:t+}) 
are closed fundamental domains of the $W(\tilde{\Sigma})$-action on $\mathfrak{a}$ 
and $W(\mathfrak{t})$-action on $\mathfrak{t}$, respectively. 
Since ${\mathfrak a}_+$ is a subset of ${\mathfrak t}_+$, we get, by using \cite[p.~293, Theorem 2.2]{Helgason}, that 
\begin{align*}
1&=\#(M\cap {\mathfrak t}_+)\\
&\geq \#(M\cap {\mathfrak a}_+)=\#\{\mbox{the $W(\tilde{\Sigma})$-orbits $\subset M\cap\mathfrak{a}$}\}\\
&\geq \#\{W(\tilde{\Sigma})x_0\}=1.
\end{align*}
Thus, we get the assertion. 
\end{proof}

From Theorem~\ref{thm:can}, there exists a compact symmetric triad $(G,\theta_1',\theta_2')$ 
which satisfies (i) and (ii) in Subsection~3.1. 
Thus, we complete the proof of Theorem~\ref{thm:int}. 

\section{Formulation of settings and its application}
\label{sec:main2}

By using Lemma~\ref{lem:equiv}, Theorems~\ref{thm:BI} and \ref{thm:can}, we get the following in a similar argument 
in the previous section. 

\begin{theorem}\label{thm:can2}
Let $G$ be a compact connected simple Lie group, and $(G,\theta_1,\theta_2)$ be a compact symmetric triad. 
For $x_0\in {\mathfrak p}_1\cap {\mathfrak p}_2-\{0\}$, set 
$M=\mathrm{Ad}(G)x_0$ and $L_i=\mathrm{Ad}(K_i)x_0$. 
Then there exists a canonical compact symmetric triad $(G,\theta_1',\theta_2')$, 
a canonical maximal abelian subalgebra $\mathfrak{t}'$ of $\mathfrak{g}$ with respect to $(G,\theta_1',\theta_2')$, 
and $H\in \mathfrak{a}', x_0'\in \mathfrak{a}'$ such that 
\[
M=\mathrm{Ad}(G)x_0',\quad L_1=\mathrm{Ad}(\exp H)\mathrm{Ad}(K_1')x_0',\quad 
L_2=\mathrm{Ad}(K_2')x_0',
\]
where we put ${\mathfrak a}'={\mathfrak t}'\cap {\mathfrak p}_1'\cap {\mathfrak p}_2'$.
\end{theorem}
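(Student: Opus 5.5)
We begin by applying Theorem~\ref{thm:can} to $(G,\theta_1,\theta_2)$. This gives a canonical triad $(G,\theta_1,\theta_2'')$ equivalent to it, and we write the equivalence in the form of Definition~\ref{dfn:Matsuki}. There is some freedom in which involution is conjugated by the inner automorphism, so we may arrange the roles so that, after renaming, there exist $\varphi\in\mathrm{Aut}(G)$ and $g\in G$ with
\[
\theta_1'=\tau_g\varphi\theta_1\varphi^{-1}\tau_g^{-1},\qquad \theta_2'=\varphi\theta_2\varphi^{-1},
\]
where $(G,\theta_1',\theta_2')$ is canonical. Let $\mathfrak{t}'$ be a canonical maximal abelian subalgebra for this triad. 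By Theorem~\ref{thm:BI}, $\mathfrak{a}':=\mathfrak{t}'\cap\mathfrak{p}_1'\cap\mathfrak{p}_2'$ is a maximal abelian subspace of $\mathfrak{p}_1'\cap\mathfrak{p}_2'$.

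Next we use Hermann's decomposition $G=K_2'A'K_1'$ with $A'=\exp\mathfrak{a}'$, and write $g^{-1}=k_2\exp(H)k_1$ with $k_i\in K_i'$. This is the point where the variable $H\in\mathfrak{a}'$ enters.

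To remove $\varphi$, we pull everything back to the original triad. Since $\varphi^{-1}$ transports $\theta_2'$ to $\theta_2$, we may replace the pair $(\theta_1',\theta_2')$ by $(\varphi^{-1}\theta_1'\varphi,\ \theta_2)$ and correspondingly use $\varphi^{-1}(\mathfrak{t}')$. This is Lemma~\ref{lem:equiv}(1); canonicity is preserved under automorphisms, since the conditions (C1) and (C2) are transported by $\varphi$. So without loss of generality $\varphi=1$. Then $\theta_1=\tau_{g}^{-1}\theta_1'\tau_g$, and hence
\[
K_1=g^{-1}K_1'g,\qquad \mathfrak{p}_1=\mathrm{Ad}(g^{-1})\mathfrak{p}_1',\qquad K_2=K_2'.
\]
Substituting $g^{-1}=k_2\exp(H)k_1$ gives
\[
L_1=\mathrm{Ad}(K_1)x_0=\mathrm{Ad}(k_2)\,\mathrm{Ad}(\exp H)\,\mathrm{Ad}(K_1')\,y
\]
for a suitable $y\in\mathfrak{p}_1'$, and $L_2=\mathrm{Ad}(K_2')x_0$. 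Conjugating the whole configuration by $\mathrm{Ad}(k_2)^{-1}$, which preserves $K_2'$ (Lemma~\ref{lem:equiv}(2)), we may absorb $k_2$.

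The main obstacle is choosing a single point $x_0'\in\mathfrak{a}'$ that generates $L_2$ under $K_2'$ and $L_1$ under $\mathrm{Ad}(\exp H)K_1'$ at the same time. For this we use $L_1\cap L_2\neq\emptyset$ together with (\ref{eqn:intersection}):
\[
L_1\cap L_2=M\cap\bigl(\mathrm{Ad}(\exp H)\mathfrak{p}_1'\cap\mathfrak{p}_2'\bigr).
\]
By Lemma~\ref{lem:equiv}(2), $\mathfrak{a}'$ is a maximal abelian subspace of $\mathrm{Ad}(\exp H)\mathfrak{p}_1'\cap\mathfrak{p}_2'$. The $K_1'\cap K_2'$-conjugacy lemma for maximal abelian subspaces, applied to the symmetric pair associated with $\mathrm{Ad}(\exp H)\theta_1'\mathrm{Ad}(\exp H)^{-1}$ and $\theta_2'$, shows that every element of $\mathrm{Ad}(\exp H)\mathfrak{p}_1'\cap\mathfrak{p}_2'$ is conjugate into $\mathfrak{a}'$ by an element fixing both involutions. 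Pick $z\in L_1\cap L_2$, move it into $\mathfrak{a}'$ in this way, and call the result $x_0'$.

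Then $x_0'$ lies in $L_2$, so $L_2=\mathrm{Ad}(K_2')x_0'$. It also lies in $L_1$, so $L_1=\mathrm{Ad}(\exp H)\mathrm{Ad}(K_1')\,\mathrm{Ad}(\exp H)^{-1}x_0'$. Since $\mathfrak{a}'$ is abelian, $\mathrm{Ad}(\exp H)^{-1}x_0'=x_0'$, which gives $L_1=\mathrm{Ad}(\exp H)\mathrm{Ad}(K_1')x_0'$. Finally $M=\mathrm{Ad}(G)x_0'$ because $x_0'\in L_2\subset M$.

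The delicate points to check are the following. The conjugating element must preserve both modified involutions, so that $L_1$ and $L_2$ are preserved; this holds because it lies in the fixed groups. Also, $K_1'$ and $K_2'$ must be the images of $K_1$ and $K_2$ with the same component conditions; this follows from the orbit connectedness result of \cite{HKT}.
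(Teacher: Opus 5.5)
Your proposal is correct and follows the paper's route: the canonical representative from Theorem~\ref{thm:can}, removal of $\varphi$ via Lemma~\ref{lem:equiv}(1), Hermann's decomposition $G=K_2'A'K_1'$ with Lemma~\ref{lem:equiv}(2), and Theorem~\ref{thm:BI} for maximality of $\mathfrak{a}'$. You also make explicit a step the paper leaves implicit, namely choosing $x_0'\in\mathfrak{a}'$ by conjugating $x_0$ (which already lies in $L_1\cap L_2$) into $\mathfrak{a}'$ with an element of $F(\theta_1,G)\cap F(\theta_2,G)$.

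The one thing to phrase more carefully is the absorption of $k_2$. Transport the canonical triad, $\mathfrak{t}'$ and $H$ by $\tau_{k_2}$ (this fixes $\theta_2'$ because $k_2\in K_2'$) instead of moving $L_1,L_2$. Then the three identities hold exactly, rather than only up to $\mathrm{Ad}(k_2)$.
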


Based on the theorem above, we can set as follows.  
Let $(G,\theta_1,\theta_2)$ be a canonical compact symmetric triad. 
For $x_0\in\mathfrak{a}-\{0\}$, set 
\[
M=\mathrm{Ad}(G)x_0,\quad K_i=F(\theta_i,G)_0,\quad L_i=\mathrm{Ad}(K_i)x_0.
\]
In order to study the intersections of two real flag manifolds in a complex flag manifold, it suffices to consider the intersection 
\begin{equation}
\label{eqn:intersection2}
\mathrm{Ad}(\exp H)L_1\cap L_2\qquad (H\in\mathfrak{a}).
\end{equation}

Theorem~\ref{thm:int} and (\ref{eqn:decomp}) imply the following: 

\begin{theorem} \label{thm:discrete}
Under the setting above, $\mathrm{Ad}(\exp H)L_1\cap L_2$ is discrete if and only if 
$H$ is a regular point of $(G,\theta_1,\theta_2)$ with respect to $\mathfrak{a}$. 
\end{theorem}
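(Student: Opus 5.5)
We reduce the statement to Theorem~\ref{thm:int} applied to the pair $\mathrm{Ad}(\exp H)L_1$ and $L_2$, and then read off abelianness from the decomposition (\ref{eqn:decomp}). Set $a=\exp H$ and $\theta_1'=\tau_a\theta_1\tau_a^{-1}$. Then $(G,\theta_1',\theta_2)$ is a compact symmetric triad. Its $-1$ eigenspaces are $\mathfrak{p}_1'=\mathrm{Ad}(a)\mathfrak{p}_1$ and $\mathfrak{p}_2$, and its real flag manifolds through $x_0$ are $\mathrm{Ad}(a)L_1=\mathrm{Ad}(aK_1a^{-1})x_0$ (since $\mathrm{Ad}(a)x_0=x_0$ for $x_0\in\mathfrak{a}$) and $L_2$. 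The element $x_0$ lies in $\mathrm{Ad}(a)\mathfrak{p}_1\cap\mathfrak{p}_2-\{0\}$, because $\mathfrak{a}\subset\mathrm{Ad}(a)\mathfrak{p}_1\cap\mathfrak{p}_2$ by (\ref{eqn:decomp}). Hence Theorem~\ref{thm:int}, in the equivalence (1)$\Leftrightarrow$(4), applies to this triad. It gives that $\mathrm{Ad}(a)L_1\cap L_2$ is discrete if and only if $\mathrm{Ad}(a)\mathfrak{p}_1\cap\mathfrak{p}_2$ is abelian.

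Next we show that $\mathrm{Ad}(a)\mathfrak{p}_1\cap\mathfrak{p}_2$ is abelian if and only if it equals $\mathfrak{a}$. Since $(G,\theta_1,\theta_2)$ is canonical, $\mathrm{ord}(\theta_1\theta_2)<\infty$, so (\ref{eqn:decomp}) is available. If the space equals $\mathfrak{a}$, it is abelian.

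Conversely, suppose some summand $V(\alpha,\epsilon)\cap\mathfrak{p}_2$ occurs and is nonzero. By Lemma~\ref{lem:3} it contains $\tilde{Y}^\epsilon_{\alpha,i}$, and for $H'\in\mathfrak{a}$ we have $[H',\tilde{Y}^\epsilon_{\alpha,i}]=-\langle\alpha,H'\rangle\tilde{X}^\epsilon_{\alpha,i}$. Choosing $H'$ with $\langle\alpha,H'\rangle\neq0$, which is possible since $\alpha\ne0$, this bracket is nonzero. Both $H'$ and $\tilde{Y}^\epsilon_{\alpha,i}$ lie in $\mathrm{Ad}(a)\mathfrak{p}_1\cap\mathfrak{p}_2$, so the space is not abelian. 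Note that $m(\alpha,\epsilon)\ge1$ whenever $\alpha\in\Sigma^+_\epsilon$, so occurring summands are indeed nonzero.

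Finally, by the definition of a regular point, $\mathrm{Ad}(\exp H)\mathfrak{p}_1\cap\mathfrak{p}_2=\mathfrak{a}$ holds exactly when $H\in\mathfrak{a}_r$. Chaining the three equivalences proves the theorem. Equivalently, by Lemma~\ref{fact:2}, discreteness holds exactly when $\langle\alpha,H\rangle+\varphi_\epsilon\notin\pi\mathbb{Z}$ for all $\alpha\in\Sigma_\epsilon$.

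The main point to check is the first step: that Theorem~\ref{thm:int} may legitimately be applied to the conjugated, generally noncanonical and noncommuting triad. This is fine because Theorem~\ref{thm:int} was proved for arbitrary triads. As an alternative, one can use Lemma~\ref{lem:finiteorder} directly for the non-discrete direction and the trivial implication (4)$\Rightarrow$(1) for the other.
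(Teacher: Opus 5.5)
Your proposal is correct and is essentially the paper's argument, which consists only of the remark that Theorem~\ref{thm:int} and (\ref{eqn:decomp}) imply the statement: you apply (1)$\Leftrightarrow$(4) of Theorem~\ref{thm:int} to the triad $(G,\tau_{\exp H}\theta_1\tau_{\exp H}^{-1},\theta_2)$, and then use (\ref{eqn:decomp}) to see that $\mathrm{Ad}(\exp H)\mathfrak{p}_1\cap\mathfrak{p}_2$ is abelian exactly when it equals $\mathfrak{a}$, i.e.\ when $H\in\mathfrak{a}_r$. Your added details correctly fill in what the paper leaves implicit: that $\mathrm{Ad}(\exp H)L_1$ is the real flag manifold of the conjugated triad through $x_0$, and that a nonzero summand $V(\alpha,\epsilon)\cap\mathfrak{p}_2$ does not commute with $\mathfrak{a}$.
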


We show that we can replace $\mathfrak{a}$ in (\ref{eqn:intersection2}) to the closure $\bar{P}$ 
of a cell $P$ (see Remark~\ref{rem:closure} below). 
In order to prove this, we prepare the following. 

The group $\{(k,0)\mid k\in Z_{K_1\cap K_2}(\mathfrak{a})\}\cong Z_{K_1\cap K_2}(\mathfrak{a})$ is a normal subgroup 
of the group $\{(s,Y)\in N_{K_1}(\mathfrak{a})\ltimes \mathfrak{a}\mid \exp(-Y)s\in K_2\}$, where we set 
$Z_{K_1\cap K_2}(\mathfrak{a})=\{k\in K_1\cap K_2\mid \mathrm{Ad}(k)=1\; \mbox{on}\; \mathfrak{a}\}$. 
Matsuki (\cite{Matsuki}, \cite{Matsuki2}) defined the group $\tilde{J}$ by 
\[
\tilde{J}=\{(s,Y)\in N_{K_2}(\mathfrak{a})\ltimes \mathfrak{a}\mid \exp(-Y)s\in K_1\}/
\{(k,0)\mid k\in Z_{K_1\cap K_2}(\mathfrak{a})\}.
\]
Then $\tilde{J}$ acts effectively on $\mathfrak{a}$. 
Let $W(\tilde{\Sigma},\{\Sigma_{\epsilon}\}_{\epsilon\in U(1)})$ denote the subgroup of $O(\mathfrak{a})\ltimes\mathfrak{a}$ 
generated by 
\[
\bigcup_{\epsilon\in U(1)}\left\{
\left(
s_\alpha,2\frac{n\pi-\varphi_\epsilon}{\|\alpha\|^2}\alpha
\right)
\mid 
\alpha\in\Sigma_\epsilon, n\in\mathbb{Z}
\right\}. 
\]
Then $W(\tilde{\Sigma},\{\Sigma_{\epsilon}\}_{\epsilon\in U(1)})$ is a subgroup of $\tilde{J}$ (\cite{Ohno}). 

For two subsets $A,B\subset M$ are called {\it congruent} with each other, if there exists $g\in G$ such that $B=\mathrm{Ad}(g)A$. 

\begin{proposition}\label{pro:affweylgr}
If  the two points $H_1\in\mathfrak{a}$ and $H_2\in\mathfrak{a}$ are transformed with each other 
by the action of $\tilde{J}$, then  
$(\mathrm{Ad}(\exp H_1)L_1)\cap L_2$ and $(\mathrm{Ad}(\exp H_2)L_1)\cap L_2$ 
are congruent with each other.

The group $\tilde{J}$ maps any cell to a cell. 
The group $W(\tilde{\Sigma},\{\Sigma_{\epsilon}\})$ acts transitively on the set of cells.
For any fixed cell $P$, the following holds: 
\[
\mathfrak{a}=\bigcup_{\sigma\in W(\tilde{\Sigma},\{\Sigma_{\epsilon}\})}\sigma\bar{P}.
\]
\end{proposition}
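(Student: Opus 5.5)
The proof splits into four parts: congruence under $\tilde{J}$, preservation of cells by $\tilde{J}$, transitivity of $W(\tilde{\Sigma},\{\Sigma_\epsilon\})$ on cells, and the covering of $\mathfrak{a}$ by translates of $\bar{P}$.

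\emph{Congruence.} Let $(s,Y)$ represent an element of $\tilde{J}$, so $s\in N_{K_2}(\mathfrak{a})$ and $k_1:=\exp(-Y)s\in K_1$. Its affine action on $\mathfrak{a}$ is $H\mapsto \mathrm{Ad}(s)H+Y$; suppose this sends $H_1$ to $H_2$. Using $s=\exp(Y)k_1$ and that $\mathrm{Ad}(s)$ normalizes $\mathfrak{a}$, we get
\[
s\exp(H_1)=\exp(\mathrm{Ad}(s)H_1)\,s=\exp(\mathrm{Ad}(s)H_1+Y)\,k_1=\exp(H_2)\,k_1 .
\]
Since $\mathrm{Ad}(k_1)L_1=L_1$ and $\mathrm{Ad}(s)L_2=L_2$ (because $s\in K_2$), this gives
\[
\mathrm{Ad}(s)\bigl(\mathrm{Ad}(\exp H_1)L_1\cap L_2\bigr)=\mathrm{Ad}(\exp H_2)L_1\cap L_2 .
\]
Elements of $Z_{K_1\cap K_2}(\mathfrak{a})$ act trivially on $\mathfrak{a}$, so the statement does not depend on the representative.

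\emph{Cells.} By the same identity, $\mathrm{Ad}(s)$ carries $\mathrm{Ad}(\exp H_1)\mathfrak{p}_1\cap\mathfrak{p}_2$ onto $\mathrm{Ad}(\exp H_2)\mathfrak{p}_1\cap\mathfrak{p}_2$ and carries $\mathfrak{a}$ onto $\mathfrak{a}$. Hence $H_1$ is regular exactly when $H_2$ is, so $\tilde{J}$ preserves $\mathfrak{a}_r$. Since $\tilde{J}$ acts by affine homeomorphisms of $\mathfrak{a}$, it permutes the connected components of $\mathfrak{a}_r$, i.e.\ the cells.

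\emph{Transitivity and covering.} By Lemma~\ref{fact:2}, (3), the complement of $\mathfrak{a}_r$ is the union of the affine hyperplanes
\[
\{H\in\mathfrak{a}\mid \langle\alpha,H\rangle=n\pi-\varphi_\epsilon\},\qquad \alpha\in\Sigma_\epsilon,\ n\in\mathbb{Z}.
\]
The generator $\bigl(s_\alpha,2\frac{n\pi-\varphi_\epsilon}{\|\alpha\|^2}\alpha\bigr)$ acts by $H\mapsto s_\alpha H+2\frac{n\pi-\varphi_\epsilon}{\|\alpha\|^2}\alpha$, which is exactly the orthogonal reflection in this hyperplane. So $W(\tilde{\Sigma},\{\Sigma_\epsilon\})$ is generated by the reflections in the walls of the hyperplane arrangement, and the cells are its chambers. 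The family of hyperplanes is locally finite: $\tilde{\Sigma}$ is finite, and since $\mathrm{ord}(\theta_1\theta_2)<\infty$ the values $\varphi_\epsilon$ form a finite set. Moreover, it is invariant under the generated group, because $W(\tilde{\Sigma},\{\Sigma_\epsilon\})\subset\tilde{J}$ and $\tilde{J}$ preserves $\mathfrak{a}_r$.

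The standard theory of groups generated by reflections in a locally finite, invariant hyperplane arrangement (Bourbaki, Ch.~V, \S3; Humphreys, Ch.~4) then applies. For two chambers $P,P'$, connect interior points by a generic segment that avoids all codimension-two intersections. Each time the segment crosses a wall, reflect in that wall; by induction on the number of crossings this produces $\sigma$ with $\sigma P=P'$. Since the closures of the chambers cover $\mathfrak{a}$ ($\mathfrak{a}_r$ is open dense), we obtain $\mathfrak{a}=\bigcup_\sigma\sigma\bar{P}$.

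The main obstacle is this last paragraph, specifically the local finiteness and invariance of the hyperplane family needed to apply the standard argument. Everything else is a direct computation.
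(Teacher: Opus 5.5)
Your proof is correct, and its first two parts coincide with the paper's. Your identity $s\exp(H_1)=\exp(H_2)k_1$ is the paper's factorization $\mathrm{Ad}(\exp H_2)=\mathrm{Ad}(k)\mathrm{Ad}(\exp H_1)\mathrm{Ad}(k^{-1}\exp Y)$, and in both arguments cell preservation follows from the resulting invariance of $\mathfrak{a}_r$. The difference is in transitivity. The paper fixes $H_i\in P_i$ and picks $s_0\in W(\tilde{\Sigma},\{\Sigma_\epsilon\})$ minimizing $\|H_1-sH_2\|$. It then observes that if $s_0P_2\neq P_1$, the segment from $H_1$ to $s_0H_2$ meets a wall $\langle\alpha,H\rangle+\varphi_\epsilon=n\pi$ whose reflection strictly shortens the distance, a contradiction. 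You instead use the gallery argument along a generic segment, or equivalently Bourbaki's theorem for a locally finite, reflection-stable family of hyperplanes. The paper's argument is shorter and needs no genericity or codimension-two bookkeeping. However, it tacitly assumes the minimum is attained, i.e.\ that orbits of this infinite affine group are discrete. That does hold here: the linear parts lie in the finite group $W(\tilde{\Sigma})$, and since $\varphi_\epsilon\in\frac{\pi}{n}\mathbb{Z}$ with $n=\mathrm{ord}(\theta_1\theta_2)$, the translation parts lie in a lattice. Your route replaces this with local finiteness, which you check explicitly. It also justifies the final covering statement, which the paper leaves implicit: density of $\mathfrak{a}_r$ only shows that $\mathfrak{a}$ is the closure of the union of all cells, and local finiteness is what lets you write it as $\bigcup_\sigma\sigma\bar{P}$.
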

\begin{proof}
By the assumption, there exists $[(k,Y)]\in \tilde{J}$ such that $H_2=\mathrm{Ad}(k)H_1+Y$. 
Then 
\[
\mathrm{Ad}(\exp H_2)=\mathrm{Ad}(k)\mathrm{Ad}(\exp H_1)
\mathrm{Ad}(k^{-1}\exp Y). 
\]
Since $k$ is an element of $K_2$, and $k^{-1}\exp Y$ is an element of $K_1$, we have 
\begin{equation}
\label{eqn:L1L2}
(\mathrm{Ad}(\exp H_2)L_1)\cap L_2
=\mathrm{Ad}(k)((\mathrm{Ad}(\exp H_1)L_1)\cap L_2)
\end{equation}
and
\begin{equation}
\label{eqn:P1P2}
\mathrm{Ad}(\exp H_2)\mathfrak{p}_1\cap \mathfrak{p}_2
=\mathrm{Ad}(k)((\mathrm{Ad}(\exp H_1)\mathfrak{p}_1)\cap \mathfrak{p}_2).
\end{equation}
Hence, $(\mathrm{Ad}(\exp H_1)L_1)\cap L_2$ and $(\mathrm{Ad}(\exp H_2)L_1)\cap L_2$ 
are congruent with each other by (\ref{eqn:L1L2}). 

By (\ref{eqn:P1P2}), $\tilde{J}$ maps any regular point to a regular point. 
Since the action of $\tilde{J}$ on $\mathfrak{a}$ is continuous, $\tilde{J}$ maps any cell to a cell. 

Since $W(\tilde{\Sigma},\{\Sigma_{\epsilon}\})$ is a subgroup of $\tilde{J}$, 
$W(\tilde{\Sigma},\{\Sigma_{\epsilon}\})$ also maps any cell to a cell. 
We show that the action of $W(\tilde{\Sigma},\{\Sigma_{\epsilon}\})$ is transitive. 
Let $P_1$ and $P_2$ be two cells and select $H_i\in P_i\;(i=1,2)$. 
Take $s_0\in W(\tilde{\Sigma},\{\Sigma_{\epsilon}\})$ such that 
\[
|H_1-s_0 H_2|=\min\{|H_1-sH_2|\mid s\in W(\tilde{\Sigma},\{\Sigma_{\epsilon}\})\}.
\]
Since $s_0$ maps any cell to a cell, $s_0P_2=P_1$ or $s_0P_2\cap P_1=\emptyset$ holds. 
If it were to $s_0P_2\cap P_1=\emptyset$, there would exist an interior point $H$ of the line segment $\overline{H_1s_0H_2}$ 
and $\epsilon\in U(1),\alpha\in\Sigma_\epsilon, n\in\mathbb{Z}$ such that 
$\langle\alpha,H\rangle +\varphi_\epsilon=n\pi$ by Lemma~\ref{fact:2}. 
Set $s:=(s_\alpha,2\frac{n\pi-\varphi_\epsilon}{\|\alpha\|^2})\in W(\tilde{\Sigma},\{\Sigma_{\epsilon}\})$. 
Then 
\begin{align*}
&\|H_1-s_0H_2\|^2-\|H_1-ss_0H_2\|^2\\
&=-\frac{4}{\|\alpha\|^4}(n\pi-\varphi_\epsilon -\langle\alpha,H_1\rangle )
(n\pi-\varphi_\epsilon -\langle\alpha,s_0H_2\rangle )>0, 
\end{align*}
which would be a contradiction to the choice of $s_0$. 
\end{proof}

\begin{remark}\rm \label{rem:closure}
We can replace $H\in \mathfrak{a}$ in (\ref{eqn:intersection2}) to $H\in \bar{P}$ by 
Proposition~\ref{pro:affweylgr}. 
\end{remark}

\begin{theorem}\label{thm:cell}
Let $(G,\theta_1,\theta_2)$ be a compact symmetric triad, which is not necessarily canonical. 
Then the closure of a cell defined by $(G,\theta_1,\theta_2)$ is a simplex. 
\end{theorem}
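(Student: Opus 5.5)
First reduce to the canonical case. By Theorem~\ref{thm:can} there is a canonical triad $(G,\theta_1,\theta_2')\sim(G,\theta_1,\theta_2)$. The equivalence is realized by an automorphism $\varphi$ together with an inner automorphism $\tau$. Combining the Hermann decomposition $G=K_2AK_1$ with Lemma~\ref{lem:equiv} and Lemma~\ref{lem:regular}, the set of regular points of the original triad is mapped onto the set of regular points of the canonical one by an affine isometry of the form $H\mapsto \mathrm{Ad}(k_2)\varphi(H)-Y$. Affine isometries preserve connected components, closures and simplices. It therefore suffices to treat a canonical triad. There $\mathrm{ord}(\theta_1\theta_2)<\infty$, so the decomposition (\ref{eqn:decomp}) and Lemma~\ref{fact:2} are available.

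In the canonical case, Lemma~\ref{fact:2} shows that $\mathfrak{a}_r$ is the complement in $\mathfrak{a}$ of the family of affine hyperplanes
\[
P_{\alpha,\epsilon,n}=\{H\in\mathfrak{a}\mid \langle\alpha,H\rangle+\varphi_\epsilon=n\pi\},\qquad \epsilon\in U(1),\ \alpha\in\Sigma_\epsilon,\ n\in\mathbb{Z}.
\]
Each such hyperplane is the fixed hyperplane of the reflection $(s_\alpha,2\frac{n\pi-\varphi_\epsilon}{\|\alpha\|^2}\alpha)\in W(\tilde{\Sigma},\{\Sigma_\epsilon\})$. Conversely, every reflection in this group fixes a hyperplane of the arrangement, because conjugating a generator by a group element again gives a generator: the group maps regular points to regular points by Proposition~\ref{pro:affweylgr}. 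Since $\tilde\Sigma$ is finite and the offsets lie in a lattice $\pi\mathbb{Z}$ shifted by finitely many constants, the arrangement is locally finite. Hence $W:=W(\tilde{\Sigma},\{\Sigma_\epsilon\})$ is a discrete group generated by affine reflections, with linear part $W(\tilde\Sigma)$.

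The cells are the chambers of this arrangement, and $W$ acts transitively on them by Proposition~\ref{pro:affweylgr}. Lemma~\ref{lem:tildeSigma}(1) says that $\tilde\Sigma$ is irreducible and spans $\mathfrak{a}$. Hence $W$ is an irreducible affine reflection group: a discrete reflection group whose linear part is the irreducible finite Weyl group $W(\tilde\Sigma)$, and whose translation subgroup contains a lattice of full rank. To see the lattice, compose two reflections in parallel hyperplanes $P_{\alpha,\epsilon,n}$ and $P_{\alpha,\epsilon,n+1}$; this gives the translation by $2\pi\alpha/\|\alpha\|^2$, and these vectors span $\mathfrak{a}$. The classical theory of affine Weyl groups (Bourbaki, Lie VI \S2, or Humphreys \S4) then says that the closure of a chamber of an irreducible, essential, locally finite affine reflection arrangement is a simplex. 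Concretely, the chamber is bounded and is cut out by $r+1$ walls, $r=\dim\mathfrak{a}$, whose normals satisfy a single positive linear relation.

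The main obstacle is verifying that the hyperplane family really is the reflection arrangement of a discrete affine group with full-rank translations. Three points need care.

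\begin{enumerate}
\item[(a)] The set of hyperplanes must be $W$-stable. This is where $W\subset\tilde J$ is used: $\tilde J$ preserves $\mathfrak{a}_r$.
\item[(b)] Boundedness of a cell. Irreducibility and spanning give, for each direction, a root $\alpha$ with $\langle\alpha,\cdot\rangle\neq 0$ in that direction. The hyperplanes $P_{\alpha,\epsilon,n}$, $n\in\mathbb{Z}$, are spaced by $\pi/\|\alpha\|$, so no ray stays inside a single cell.
\item[(c)] The different shifts $\varphi_\epsilon$ for different $\epsilon$ with the same $\alpha$ must not destroy the simplex shape. This follows because the arrangement is closed under its own reflections, so the usual chamber argument applies.
\end{enumerate}

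I would first try to show (c) directly from (a), and then cite the standard theorem. If a wall configuration argument is needed explicitly, I would argue that the walls of $\bar P$ generate $W$ and that their normals form an indecomposable set whose Gram matrix is of affine type, forcing exactly $r+1$ walls and hence a simplex.
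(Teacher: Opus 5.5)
Your proof is correct, but it takes a genuinely different route from the paper's.

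The reduction to a canonical triad is the same as in the paper, which also uses Lemma~\ref{lem:regular} and Theorem~\ref{thm:can}. After that the paper argues by classification. It narrows to $\mathrm{ord}(\theta_1\theta_2)\in\{1,2,3,4\}$, quotes symmetric-space theory for order $1$ and Ikawa's earlier results for order $2$, and writes the cell down explicitly as a triangle in the $\mathfrak{o}(8)$ model of Section~\ref{sec:order3} for order $3$. For order $4$ it asserts the result by an omitted computation.

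You instead give a uniform argument:
\begin{enumerate}
\item[(i)] Lemma~\ref{fact:2} makes the cells the chambers of the arrangement $\{P_{\alpha,\epsilon,n}\}$.
\item[(ii)] The inclusion $W(\tilde{\Sigma},\{\Sigma_\epsilon\})\subset\tilde{J}$, together with Proposition~\ref{pro:affweylgr}, makes the arrangement stable under the group generated by its own reflections.
\item[(iii)] The linear part of that group is the irreducible group $W(\tilde{\Sigma})$.
\item[(iv)] Bourbaki's theorem on chambers of an irreducible, infinite, discrete affine reflection group then gives a simplex.
\end{enumerate}
Your route is classification-free and covers the order-$4$ case that the paper leaves unchecked. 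It also yields extra structure for free: the group acts simply transitively on cells, and each cell has exactly $r+1$ walls whose reflections generate the group. The paper's route instead produces explicit cells, which it needs anyway for Theorem~\ref{thm:-1inv} and Example~\ref{ex:multiplicationby-1}.

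Three small points should be tightened.
\begin{enumerate}
\item[(a)] \emph{Stability.} The right statement is that the group preserves $\mathfrak{a}\setminus\mathfrak{a}_r=\bigcup P_{\alpha,\epsilon,n}$. An affine hyperplane contained in a countable, locally finite union of hyperplanes must equal one of them. Your claim that every reflection in the group has its mirror in the arrangement is not needed, and the conjugation remark does not prove it.
\item[(b)] \emph{Discreteness.} Do not infer discreteness from local finiteness alone. Instead, use $\epsilon^N=1$ with $N=\mathrm{ord}(\theta_1\theta_2)$, so $\varphi_\epsilon\in\frac{\pi}{N}\mathbb{Z}$. Then the group lies in the crystallographic group $W(\tilde{\Sigma})\ltimes\Lambda$, where $\Lambda$ is the $W(\tilde{\Sigma})$-stable lattice spanned by $\frac{2\pi}{N}\frac{\alpha}{\|\alpha\|^2}$, $\alpha\in\tilde{\Sigma}$.
\item[(c)] \emph{Citation.} Cite Bourbaki's general setting of discrete reflection groups (Lie, Ch.~V, \S3), not Humphreys \S4 or Bourbaki Ch.~VI. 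Those only treat the arrangement $\langle\alpha,x\rangle\in\mathbb{Z}$ of a single root system, whereas your shifts $\varphi_\epsilon$ vary with $\epsilon$. This is exactly why your worry (c) disappears once stability and discreteness are established.
\end{enumerate}
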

\begin{proof}
By Lemma~\ref{lem:regular} and Theorem~\ref{thm:can}, we can and do assume that $(G,\theta_1,\theta_2)$ is canonical. 
By the classification of the equivalence classes of compact symmetric triads, we have $\mathrm{ord}(\theta_1\theta_2)=1,2,3,4,6$. 
Since $\dim\mathfrak{a}$ is greater than or equal to one, we have $\mathrm{ord}(\theta_1\theta_2)\not=6$. 
When $\mathrm{ord}(\theta_1\theta_2)=1$, the assertion is well-known by the theory of symmetric spaces. 
When $\mathrm{ord}(\theta_1\theta_2)=2$, the assertion is known by \cite[Cor.~2.21]{ikawa2011}. 
When $\mathrm{ord}(\theta_1\theta_2) \geq 3$, we can verify the assertion by realizing canonical compact symmetric triads $(G,\theta_1,\theta_2)$ explicitly, and by giving a concrete description of the closure of their cells.
When $\mathrm{ord}(\theta_1\theta_2)=3$, the assertion will be proved in Section~\ref{sec:order3}. 
When $\mathrm{ord}(\theta_1\theta_2)=4$, we can prove the assertion by a direct calculation. 
But we omit the proof because it is too long. 
Thus, the assertion holds.
\end{proof}

Note that even if $H\in\mathfrak{a}$ is a regular point with respect to $ (G,\theta_1,\theta_2)$, 
it does not necessarily a regular point with respect to $(G,\theta_2,\theta_1)$ 
interchanging the role of $\theta_1$ and $\theta_2$. 
The following two conditions (1) and (2) are equivalent.
\begin{enumerate}
\item[(1)] $\mathfrak{a}_r$ is invariant under multiplication by $-1$.
\item[(2)] For any $H \in \mathfrak{a}$, $H$ is a regular point with respect to $(G, \theta_1, \theta_2)$ if and only if it is a regular point of $(G, \theta_2, \theta_1)$.
\end{enumerate}

\begin{theorem}\label{thm:-1inv}
Let $(G,\theta_1,\theta_2)$ be a compact symmetric triad which is not neccessarily canonical. 
There exists a canonical compact symmetric triad $(G,\theta_1',\theta_2')$ which is  equivalent to  
$(G,\theta_1,\theta_2)$ whose ${\mathfrak a}_r'$ is invariant under the multiplication by $-1$. 
\end{theorem}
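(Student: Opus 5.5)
We start from Theorem~\ref{thm:can}, which gives a canonical triad $(G,\theta_1,\theta_2')$ equivalent to $(G,\theta_1,\theta_2)$. We may therefore assume from the outset that $(G,\theta_1,\theta_2)$ is canonical, with canonical $\mathfrak{t}$ and $\mathfrak{a}=\mathfrak{t}\cap\mathfrak{p}_1\cap\mathfrak{p}_2$ (Theorem~\ref{thm:BI}), and that $n=\mathrm{ord}(\theta_1\theta_2)<\infty$. By Lemma~\ref{fact:2} we then have an explicit description:
\[
\mathfrak{a}_r=\{H\in\mathfrak{a}\mid \langle\alpha,H\rangle+\varphi_\epsilon\notin\pi\mathbb{Z}\ \ (\epsilon\in U(1),\ \alpha\in\Sigma_\epsilon)\}.
\]
The set $-\mathfrak{a}_r$ is the same type of set, with $\varphi_\epsilon$ replaced by $-\varphi_\epsilon$. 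Equivalently, $\alpha\in\Sigma_\epsilon$ is replaced by $-\alpha\in\Sigma_{\bar\epsilon}$, using $\varphi_{\bar\epsilon}\equiv-\varphi_\epsilon$ mod $\pi$. So $-\mathfrak{a}_r$ is the complement of the hyperplane arrangement $\{\langle\alpha,H\rangle+\varphi_{\epsilon}\in\pi\mathbb{Z}\}$ reflected through the origin. The goal is to move the base point by a translation so that this arrangement becomes centrally symmetric.

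The plan is to use the freedom in Lemma~\ref{lem:equiv}(2) with $\varphi=1$, $\tau=\tau_a$, $a=\exp Y$, $Y\in\mathfrak{a}$. By Lemma~\ref{lem:regular} this replaces $\mathfrak{a}_r$ by $\mathfrak{a}_r-Y$. We also need that conjugating $\theta_1$ by $\tau_a$ keeps the triad canonical. Here $\mathfrak{t}$ is still $\theta_1'$-invariant because $\mathrm{Ad}(a)$ fixes $\mathfrak{t}$ pointwise, and $\theta_1'|_{\mathfrak{t}}=\theta_1|_{\mathfrak{t}}$, so (C1)(ii) is unchanged. For (C2) we need $\theta_1'\theta_2=\tau_a\theta_1\tau_a^{-1}\theta_2=\tau_{a}\tau_{\theta_1\theta_2(a)}\theta_1\theta_2$ to keep finite order, which requires checking that $\tau_{a\,\theta_1\theta_2(a)}$ interacts well. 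Since $\theta_1\theta_2$ acts trivially on $\mathfrak{a}$ (as $\theta_1=\theta_2=-1$ there), we get $\theta_1'\theta_2=\tau_{a}^{2}\theta_1\theta_2$ with $\tau_a^2$ commuting with $\theta_1\theta_2$. Its order is finite as soon as $a^2$ has finite order modulo the center, for instance when $Y$ is rational with respect to the lattice. Thus the problem reduces to finding a point $Y\in\mathfrak{a}$, with finite-order exponential, about which the arrangement is symmetric: $\mathfrak{a}_r-Y=-(\mathfrak{a}_r-Y)$, i.e.\ $2Y+\mathfrak{a}_r=-\mathfrak{a}_r+\dots$. Concretely, we want $H\mapsto 2Y-H$ to preserve $\mathfrak{a}_r$.

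Proposition~\ref{pro:affweylgr} helps locate $Y$. The affine group $W(\tilde{\Sigma},\{\Sigma_\epsilon\})$ contains the reflections in all the walls $\langle\alpha,H\rangle+\varphi_\epsilon=n\pi$ and permutes the cells. Hence it preserves the arrangement, and its walls are exactly the reflection hyperplanes of this affine Weyl-type group. An affine reflection group generated by reflections in a locally finite family of hyperplanes contains the point reflection $H\mapsto 2Y-H$ whenever $-1\in W(\tilde{\Sigma})$, with $Y$ a suitable special vertex of a cell. When $-1\notin W(\tilde{\Sigma})$, i.e.\ for types $A_r$ ($r\ge2$), $D_{2k+1}$ and $E_6$, we instead look for a point $Y$ at which the set of walls through $Y$ already contains every direction $\alpha\in\tilde{\Sigma}$ with all translates symmetric about $Y$. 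Such a $Y$ exists if the set of shifts $\{\varphi_\epsilon\mid\alpha\in\Sigma_\epsilon\}$ for each $\alpha$ is a coset of $\pi\mathbb{Z}$-type union symmetric under negation after one common translation. We would solve the linear system $\langle\alpha,Y\rangle\equiv c_\alpha$ for a simple system, then verify the remaining roots.

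The main obstacle is precisely this existence of a common symmetry center $Y$ for all families $\Sigma_\epsilon$ simultaneously. I expect that a general argument does not close, and the final step will be a case check using the classification of canonical triads with $\mathrm{ord}(\theta_1\theta_2)\in\{1,2,3,4\}$ (as in the proof of Theorem~\ref{thm:cell}). For order $1$, $\mathfrak{a}_r$ is already symmetric with $Y=0$. For order $2$, we have $\varphi_\epsilon\in\{0,\pi/2\}$, and a half-lattice point works by \cite{ikawa2011}. Orders $3$ and $4$ require reading off $\Sigma_\epsilon$ from the double Satake diagrams and exhibiting $Y$ explicitly.
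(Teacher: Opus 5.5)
There is a genuine gap: the step that carries the content of the theorem, the existence of a symmetry center $Y$, is never carried out, and your canonicity check is incomplete.

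\textbf{The missing center.} Your reduction is sound. After passing to a canonical form, Lemma~\ref{lem:equiv}(2) with $\varphi=1$, $\tau=\tau_{\exp Y}$, $Y\in\mathfrak{a}$, keeps $\mathfrak{t}$ and (C1), and replaces $\mathfrak{a}_r$ by $\mathfrak{a}_r-Y$ (Lemma~\ref{lem:regular}). So everything reduces to finding a center of symmetry $Y$ of the wall arrangement. The proposal stops exactly there: for orders $3$ and $4$ you only announce a classification check. Your dichotomy on whether $-1\in W(\tilde{\Sigma})$ is beside the point. You do not need $H\mapsto 2Y-H$ to lie in $W(\tilde{\Sigma},\{\Sigma_\epsilon\})$; you only need it to preserve the set of walls.

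The observation you are missing is this. $W(\tilde{\Sigma},\{\Sigma_\epsilon\})$ is generated by the reflections in exactly the walls $\langle\alpha,H\rangle+\varphi_\epsilon=m\pi$ ($\alpha\in\Sigma_\epsilon$, $m\in\mathbb{Z}$). Being contained in $\tilde{J}$, it preserves $\mathfrak{a}_r$ (Proposition~\ref{pro:affweylgr}), hence permutes the walls. So for each root direction, the levels of the parallel walls form a discrete set closed under $c\mapsto 2c_1-c$ for every member $c_1$. Such a set is a single progression $c+d\mathbb{Z}$. Hence the arrangement is symmetric about any point lying on a wall in every root direction (a special point of this affine reflection group), whatever the type of $\tilde{\Sigma}$.

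The paper itself simply checks cases. For orders $1,2$ one has $\varphi_\epsilon\in\{0,\pi/2\}$, so each level set $-\varphi_\epsilon+\pi\mathbb{Z}$ is already symmetric and $Y=0$ works; no half-lattice shift is needed. For order $3$, the explicit triality model of Section~\ref{sec:order3} has $\Sigma_\omega=\Sigma_{\bar\omega}$. This gives levels $\frac{\pi}{3}\mathbb{Z}$ in short directions and $\pi\mathbb{Z}$ in long ones, so again $Y=0$ (see (\ref{eq:regularpointsord3})). Order $4$ is left to an omitted computation.

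\textbf{The (C2) check.} Your verification of (C2) is wrong as stated. (C2) is not finiteness of $\mathrm{ord}(\theta_1'\theta_2)$ but the equality $\mathrm{ord}(\theta_1'\theta_2)=\mathrm{ord}(\theta_1'\theta_2|\mathfrak{t})$. You correctly obtained $\theta_1'\theta_2=\tau_{\exp 2Y}\theta_1\theta_2$ with commuting factors, and $\theta_1'\theta_2|\mathfrak{t}=\theta_1\theta_2|\mathfrak{t}$ has order $n$. So (C2) forces $(\theta_1'\theta_2)^n=\tau_{\exp 2nY}=1$. This means $\langle\lambda,Y\rangle\in\frac{\pi}{n}\mathbb{Z}$ for all $\lambda\in\tilde{\Sigma}$, using $\langle\alpha,Y\rangle=\langle\bar{\alpha},Y\rangle$ for $\alpha\in\Delta$ and $\tilde{\Sigma}=\{\bar{\alpha}\mid\alpha\in\Delta\}-\{0\}$.

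A ``rational'' $Y$ does not give this, and your center of symmetry must satisfy it. Compare Example~\ref{ex:multiplicationby-1}, where the translation $H$ satisfies $\langle\lambda,H\rangle\in\frac{\pi}{3}\mathbb{Z}$. For a point lying on walls in all directions, the condition holds automatically when $\tilde{\Sigma}$ is reduced, since $\varphi_\epsilon\in\frac{\pi}{n}\mathbb{Z}$ puts every wall level in $\frac{\pi}{n}\mathbb{Z}$. In the non-reduced case it needs an extra check.
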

\begin{proof}
By Theorem~\ref{thm:can}, we can assume that 
$(G,\theta_1,\theta_2)$ is canonical. 
In this case, we have $\mathrm{ord}(\theta_1\theta_2)=1,2,3,4$ since 
$\dim\mathfrak{a}\geq 1$. 
When $\mathrm{ord}(\theta_1\theta_2)=1,2$, it is known that 
${\mathfrak a}_r$ is invariant under the multiplication by $-1$. 

When $\mathrm{ord}(\theta_1\theta_2)=3$, the assertion will be proved in Section~\ref{sec:order3} (see (\ref{eq:regularpointsord3})). 
When $\mathrm{ord}(\theta_1\theta_2)=4$, we can prove the assertion by a direct calculation. 
But we omit the proof because it is too long.

\end{proof}

Even if $(G,\theta_1,\theta_2)$ is canonical, 
${\mathfrak a}_r$ is not necessarily invariant under the multiplication by $-1$. 
We will give such an example in Section~\ref{sec:order3} (see Example~\ref{ex:multiplicationby-1}).

Let $\mathfrak{g}$ be a compact simple Lie algebra. 
Modeled after Definition~\ref{dfn:Matsuki}, Matsuki defined an equivalence relation on the set 
\[
\{(\mathfrak{g},\theta_1,\theta_2)\mid \mbox{$\theta_1,\theta_2$ is two involutions on 
$\mathfrak{g}$}\}\]
 as follows: 

\begin{definition}[\cite{Matsuki2}]\rm \label{dfn:Matsuk2i}
$(\mathfrak{g},\theta_1,\theta_2)$ and $(\mathfrak{g},\theta_1',\theta_2')$ are {\it equivalent}, if 
there exist an automorphism $\varphi\in\mathrm{Aut}(\mathfrak{g})$ on $\mathfrak{g}$ and 
an inner automorphism $\tau\in\mathrm{Int}(\mathfrak{g})$ on $\mathfrak{g}$ such that 
\begin{equation}
\theta_1'=\tau\varphi\theta_1\varphi^{-1}\tau^{-1},\quad 
\theta_2'=\varphi\theta_2 \varphi^{-1}.
\end{equation}
\end{definition}

As far as we know, the classification of the pair $(M,L)$ of a complex flag manifold $M$ and its real flag manifold $L$ is not complete yet. 
The pair $(M,L)$ is constructed from the pair $(\mathfrak{g},\theta)$ of a compact simple Lie algebra $\mathfrak{g}$ and 
its involution $\theta$. 
The pair $(\mathfrak{g},\theta)$ determines a Satake diagram and vice versa. 
Hence the classification of $(M,L)$ can be done by using Satake diagrams in principle. 
Generalizing the viewpoint, we discuss the classification of complex flag manifolds and 
two real flag manifolds in principle. 
In order to do this, let $(G,\theta_1,\theta_2)$ and $(G',\theta_1',\theta_2')$ be two compact symmetric triads which satisfy 
$\mathfrak{g}=\mathrm{Lie}(G)=\mathrm{Lie}(G')$ and $\theta_i=\theta_i'$ on $\mathfrak{g}$. 
Then $\mathfrak{p}_i=\mathfrak{p}_i'$. 
For $x_0\in \mathfrak{p}_1\cap \mathfrak{p}_2$, we have 
$\mathrm{Ad}(G)x_0=\mathrm{Ad}(G')x_0\supset \mathrm{Ad}(K_i)x_0=\mathrm{Ad}(K_i)x_0'$. 
That is, the triple of the complex flag manifold and its two real flag manifolds is completely determined by 
the triple $(\mathfrak{g}, \theta_1, \theta_2)$ and $x_0\in \mathfrak{p}_1\cap \mathfrak{p}_2$. 
The set of complex flag manifolds and their two real flag manifolds are obtained by the equivalence classes in the sense of Definition~\ref{dfn:Matsuk2i}(see Theorem~\ref{thm:can2}).
In order to state the classification in principle, we explain the notion of double Satake diagrams, which 
is an extension of that of Satake diagrams and was introduced by \cite{BI}.  
For a canonical compact symmetric triad $(G,\theta_1,\theta_2)$,  
take a canonical maximal abelian subalgebra $\mathfrak{t}$ of $\mathfrak{g}$ 
with respect to $(G,\theta_1,\theta_2)$. 
Take a linear ordering $<$ on $\mathfrak{t}$ which satisfies the condition (C1) (ii) of Definition~\ref{dfn:canonical}.
And $\Pi$ denotes the set of simple roots of $\Delta$.
Then $\Pi$ determines the Satake diagram of $(G,\theta_i)$ with respect to $\mathfrak{t}$ for $i=1,2$. 
The pair $(S_1,S_2)$ is called a {\it double Satake diagram} of $(G,\theta_1,\theta_2)$. 
Conversely, we can reconstruct the equivalent class of Lie algebra $\mathfrak{g}$ of $G$ and 
two involutions $\theta_1$ and $\theta_2$ on 
$\mathfrak{g}$ from the double Satake diagram. 
See \cite[Theorem~5.12]{BI}.
Since a complex flag manifold and its two real flag manifolds are determined by the equivalence class of 
$(\mathfrak{g},\theta_1,\theta_2)$, 
$x_0\in\mathfrak{a}-\{0\}$ and $H\in\bar{P}$ (see Theorem~\ref{thm:can2}), a double Satake diagram $(S_1,S_2)$ and its additional data 
$H\in\bar{P}$ and $x_0\in\mathfrak{a}-\{0\}$ completely determine the complex flag manifold $M=\mathrm{Ad}(G)x_0$ and 
its two real flag manifolds $L_1=\mathrm{Ad}(\exp H)\mathrm{Ad}(K_1)x_0$ and $L_2=\mathrm{Ad}(K_2)x_0$. 
Since we know the set of all double Satake diagrams by \cite{BI}, 
the set of complex flag manifolds and their two real flag manifolds can be classified in principle.

\section{The case of $\mathrm{ord}(\theta_1\theta_2)=3$
}
\label{sec:order3}

When $\mathrm{ord}(\theta_1\theta_2)=3$,
$(\mathfrak{g},\mathfrak{k}_1,{\mathfrak k}_2)=(\mathfrak{o}(8), \mathfrak{o}(3)\oplus \mathfrak{o}(5),\kappa(\mathfrak{o}(3)\oplus \mathfrak{o}(5)))$
holds where $\kappa$ is an outer automorphism of $\mathfrak{o}(8)$ with order three (see Proposition~\ref{defofkappa} below for the precise definition).
In this section, we realize the canonical form of this symmetric triad and certain basis of \(\mathfrak{g}\) concretely.
\subsubsection*{The involutions and the canonical decompositions}
We regard eight dimensional Euclidean space $\mathbb{R}^8$ as octonions $\mathbb{O} = \sum_{i=0}^7 \mathbb{R} e_i$ 
and use its structure to construct automorphisms of the Lie algebra $\mathfrak{o}(8)$.
The following automorphisms are essentially given by \cite{Loos2}, \cite{Yokota}.
The structure of the octonion multiplication in the following calculation is based on \cite{Yokota}.
We consider two conjugations  
\begin{align*}
\bar{x} &= x_0 e_0 - \sum_{i=1}^7 x_i e_i , \\
S(x) &= \sum_{i=0}^3 x_i e_i - \sum_{i=4}^7 x_i e_i
\end{align*}
for $x = \sum x_i e_i \in \mathbb{O}$. 
Then we define two involutions on $\mathfrak{o}(8)$ as follows: 
\begin{align*}
\pi^L (X)(x) &= \overline{X(\bar{x})},  \\
\sigma^{L}(X)(x) &= S(X(S(x))) \quad (X \in \mathfrak{o}(8), x \in \mathbb{O}).
\end{align*}
We can see that 
$\pi^L = \mathrm{Ad}(I_{1,7})$, 
$\sigma^{L}=\mathrm{Ad}(I_{4,4})$.
For $0\leq i < j\leq 7$, we define 
\begin{align*}
G_{ij}(x) &= (x, e_j)e_i - (x, e_i)e_j,  \\
F_{ij}(x) &= \frac{1}{2} (e_i(\bar{e_j}x))  \quad (x \in \mathbb{O}).
\end{align*}
Then, $\{G_{ij}\}_{0\leq i < j\leq 7}$ is a basis of $\mathfrak{o}(8)$.
Note that $ G_{ij} = [G_{ik}, G_{kj}] \ (0\leq k\leq 7, k\neq i, j)$.
We can see that $\{F_{ij}\}_{0\leq i < j\leq 7}$ is also a basis of $\mathfrak{o}(8)$ by the following lemma.
\begin{lemma}[ \cite{Yokota}, Lemma~1.3.1]\label{Lem:yokota-basis}
For $i, j = 0,1,\dots,7, i \neq j$, we have $F_{ij} \in \mathfrak{o}(8)$, and when $i < j$,
$F_{ij}$ is expressed in terms of $G_{ij}$ as follows.
\begin{align*}
&\begin{cases} 2F_{01} = G_{01}+G_{23}+G_{45}+G_{67} \\ 2F_{23} = G_{01}+G_{23}-G_{45}-G_{67} \\ 2F_{45} = G_{01}-G_{23}+G_{45}-G_{67} \\ 2F_{67} = G_{01}-G_{23}-G_{45}+G_{67}, \end{cases} &
&\begin{cases} 2F_{02} = G_{02}-G_{13}-G_{46}+G_{57} \\ 2F_{13} = -G_{02}+G_{13}-G_{46}+G_{57} \\ 2F_{46} = -G_{02}-G_{13}+G_{46}+G_{57} \\ 2F_{57} = G_{02}+G_{13}+G_{46}+G_{57}, \end{cases} \\
&\begin{cases} 2F_{03} = G_{03}+G_{12}+G_{47}+G_{56} \\ 2F_{12} = G_{03}+G_{12}-G_{47}-G_{56} \\ 2F_{47} = G_{03}-G_{12}+G_{47}-G_{56} \\ 2F_{56} = G_{03}-G_{12}-G_{47}+G_{56}, \end{cases} &
&\begin{cases} 2F_{04} = G_{04}-G_{15}+G_{26}-G_{37} \\ 2F_{15} = -G_{04}+G_{15}+G_{26}-G_{37} \\ 2F_{26} = G_{04}+G_{15}+G_{26}+G_{37} \\ 2F_{37} = -G_{04}-G_{15}+G_{26}+G_{37}, \end{cases} \\
&\begin{cases} 2F_{05} = G_{05}+G_{14}-G_{27}-G_{36} \\ 2F_{14} = G_{05}+G_{14}+G_{27}+G_{36} \\ 2F_{27} = -G_{05}+G_{14}+G_{27}-G_{36} \\ 2F_{36} = -G_{05}+G_{14}-G_{27}+G_{36}, \end{cases} &
&\begin{cases} 2F_{06} = G_{06}-G_{17}-G_{24}+G_{35} \\ 2F_{17} = -G_{06}+G_{17}-G_{24}+G_{35} \\ 2F_{24} = -G_{06}-G_{17}+G_{24}+G_{35} \\ 2F_{35} = G_{06}+G_{17}+G_{24}+G_{35}, \end{cases} \\
&\begin{cases} 2F_{07} = G_{07}+G_{16}+G_{25}+G_{34} \\ 2F_{16} = G_{07}+G_{16}-G_{25}-G_{34} \\ 2F_{25} = G_{07}-G_{16}+G_{25}-G_{34} \\ 2F_{34} = G_{07}-G_{16}-G_{25}+G_{34}. \end{cases}
\end{align*}
In particular, $\{F_{ij} | 0 \le i < j \le 7\}$ forms a basis of $\mathfrak{o}(8)$.
\end{lemma}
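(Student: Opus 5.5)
The lemma has two assertions: each $F_{ij}$ lies in $\mathfrak{o}(8)$, and $F_{ij}$ is given in terms of the $G_{kl}$ by the table. The basis statement then follows from the table.

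For membership, I would show that $x\mapsto e_i(\bar{e_j}x)$ is skew-symmetric with respect to the standard inner product $(\ ,\ )$ on $\mathbb{O}$ when $i\neq j$. Left multiplication $L_a$ by a unit octonion $a$ is orthogonal, and its adjoint is $L_{\bar a}$. Hence
\[
(e_i(\bar{e_j}x),y)=(\bar{e_j}x,\bar{e_i}y)=(x,e_j(\bar{e_i}y)).
\]
So the adjoint of $2F_{ij}$ is $2F_{ji}$. It therefore suffices to prove $F_{ji}=-F_{ij}$, that is,
\[
e_i(\bar{e_j}x)+e_j(\bar{e_i}x)=0 \qquad (i\neq j).
\]
I would get this by polarizing the composition-algebra identity $a(\bar a x)=|a|^2x$ with $a=e_i+e_j$. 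This gives
\[
e_i(\bar{e_j}x)+e_j(\bar{e_i}x)=2(e_i,e_j)x=0.
\]
The identity $a(\bar a x)=|a|^2x$ holds by alternativity, since $\bar a=2(a,e_0)e_0-a$ and $a(ax)=(aa)x$. This shows $F_{ij}\in\mathfrak{o}(8)$.

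For the explicit formulas, I would expand $2F_{ij}$ in the basis $\{G_{kl}\}$. Since $G_{kl}e_m=\delta_{lm}e_k-\delta_{km}e_l$, the coefficient of $G_{kl}$ in a skew operator $A$ is $(Ae_l,e_k)$. So I need the matrix entries
\[
(e_i(\bar{e_j}e_l),e_k).
\]
Each $\bar{e_j}e_l$ is $\pm e_m$ for a single index $m$, and the same holds after multiplying by $e_i$. So $2F_{ij}$ is a signed permutation matrix. Its support is the four pairs $\{k,l\}$ with $e_ie_je_ke_l$ "associated" to the quaternionic triple through $i,j$. This explains why each line of the table has exactly four terms. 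The signs are then read off from Yokota's multiplication table for $e_1,\dots,e_7$, treating the seven $3\times 4$ blocks one at a time. Each block corresponds to a partition of $\{0,\dots,7\}$ into four pairs, namely $\{0,m\}$ together with the three pairs $\{a,b\}$ with $e_ae_b=\pm e_m$.

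This sign bookkeeping is the main obstacle. To organize it, I would use the relations $F_{0m}=\frac12 L_{e_m}$ (up to the identity $\bar{e_0}=e_0$) and $F_{ab}=\frac12 L_{e_a}L_{\bar{e_b}}$. Within a block, this reduces everything to computing $L_{e_m}$ once and composing left multiplications, for example by writing $e_a(\bar{e_b}x)$ in terms of $e_m$ and using the Moufang identities. A consistency check comes from the fact that the four operators in a block must be mutually orthogonal in the trace form.

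Finally, the linear independence of $\{F_{ij}\}$ follows blockwise. Each $4\times4$ coefficient matrix in the table is a signed Hadamard matrix $H$ with $HH^{T}=4I$. So each block of four $F$'s spans the same $4$-dimensional space as the corresponding four $G$'s. Since the seven blocks partition the $28$ basis elements $G_{kl}$, the $F_{ij}$ form a basis of $\mathfrak{o}(8)$.
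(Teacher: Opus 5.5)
Your proposal is correct and follows the same route as the proof behind the paper's bare citation of Yokota (direct verification). You get skew-symmetry of $F_{ij}$ from the composition-algebra identities (the adjoint of $L_a$ is $L_{\bar a}$, plus polarization of $a(\bar a x)=|a|^2x$), read the coefficients off as the entries $(2F_{ij}e_l,e_k)$ from the multiplication table, and obtain the basis claim from the invertible Hadamard-type $4\times 4$ blocks. One slip to fix before the sign bookkeeping: for $m\ge 1$ one has $F_{0m}x=\tfrac12\,\bar{e_m}x=-\tfrac12\,e_mx$, so $F_{0m}=-\tfrac12 L_{e_m}$ (it is $F_{m0}$ that equals $\tfrac12 L_{e_m}$); your sign would negate every $F_{0m}$ row of the table, e.g.\ $2F_{01}e_0=-e_1=G_{01}e_0$, whereas $L_{e_1}e_0=e_1$.
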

\begin{proposition}[\cite{Yokota}, Lemma~1.3.2, Theorem~1.3.5, \cite{Loos2}, pp.154--155]\label{defofkappa}
We define a linear transformation $\pi^Y$ on $\mathfrak{o}(8)$ by $\pi^Y (G_{ij}) = F_{ij}$. This $\pi^Y$ is an automorphism of $\mathfrak{o}(8)$ with $(\pi^Y)^2=1$. 
The group generated by $\pi^{L}$ and $\pi^Y $ isomorphic to symmetric group $S_{3}$ of degree $3$.
We set $\kappa := \pi^Y \circ \pi^L$, then $\kappa$ is an outer automorphism of order three on $\mathfrak{o}(8)$.
Furthermore, $\pi^{L}, \pi^Y, \sigma^{L}, \kappa$ have the following relations.
$$
(\pi^{L})^2 =1, (\pi^Y)^2=1, (\sigma^{L})^2= 1, \pi^L \sigma^L =\sigma^L \pi^L , \sigma^{L}\kappa= \kappa \sigma^{L}.
$$
\end{proposition}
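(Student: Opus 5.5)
The statement is Proposition~\ref{defofkappa}: $\pi^Y$ is an involutive automorphism, $\langle \pi^L,\pi^Y\rangle\cong S_3$, $\kappa=\pi^Y\pi^L$ is an outer automorphism of order three, and the listed relations hold. The plan is to verify everything on the basis $\{G_{ij}\}$, using the explicit formulas of Lemma~\ref{Lem:yokota-basis}. Principality of triality is not needed; only the concrete $28\times 28$ linear algebra, organized blockwise, is used.

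\emph{Step 1: the easy relations.} The identities $(\pi^L)^2=(\sigma^L)^2=1$ and $\pi^L\sigma^L=\sigma^L\pi^L$ are immediate from $\pi^L=\mathrm{Ad}(I_{1,7})$, $\sigma^L=\mathrm{Ad}(I_{4,4})$, since $I_{1,7}$ and $I_{4,4}$ are commuting diagonal involutions. Note that $\pi^L(G_{ij})=-G_{ij}$ if $i=0$ and $G_{ij}$ otherwise; $\sigma^L(G_{ij})=-G_{ij}$ exactly when $i\le 3<j$.

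\emph{Step 2: $\pi^Y$ is an involution.} Lemma~\ref{Lem:yokota-basis} shows that $\pi^Y$ preserves each of the seven 4-dimensional blocks, for instance $\mathrm{span}\{G_{01},G_{23},G_{45},G_{67}\}$. On each block it acts by $\frac12$ times a signed Hadamard matrix $H$. Checking $(\frac12 H)^2=I$ is seven $4\times4$ computations; equivalently, each such matrix is symmetric and orthogonal, which can be read off the table.

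\emph{Step 3: $\pi^Y$ is an automorphism.} I would show $[F_{ij},F_{kl}]=\pi^Y[G_{ij},G_{kl}]$. The efficient route is the octonionic one, following Yokota. The map $x\mapsto e_i(\bar e_j x)$ is a left-multiplication composite. The Moufang identities give the triality principle: for each $X\in\mathfrak{o}(8)$ there are unique $X',X''$ with $X(xy)=X'(x)y+xX''(y)$. One shows that $\pi^Y$ is one of the maps $X\mapsto X'$ (suitably composed with $\pi^L$). The uniqueness in the triality principle forces this map to be a homomorphism, since $([X,Y])'=[X',Y']$ by applying the principle twice. This is the main obstacle: one must identify the component $X'$ for $X=G_{ij}$ with $F_{ij}$ up to the correct sign or conjugation. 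I would first try this on $G_{0j}$, where $G_{0j}$ acts essentially as $x\mapsto\frac12(e_j x\cdots)$, and then extend using $G_{ij}=[G_{i0},G_{0j}]$. If that fails, a fallback is brute force: check the bracket relations only on the generators $G_{0j}$ ($j=1,\dots,7$), which generate $\mathfrak{o}(8)$, via $G_{ij}=[G_{i0},G_{0j}]$.

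\emph{Step 4: the $S_3$ structure and the last relation.} Given two involutions, $\langle \pi^L,\pi^Y\rangle\cong S_3$ is equivalent to $(\pi^Y\pi^L)^3=1$ with $\pi^Y\pi^L\neq 1$. Since $\pi^L$ only flips the signs of $G_{0j}$, the map $\kappa$ on each block is $\frac12 H$ with its first column negated. Cubing this $4\times4$ matrix on each block gives $\kappa^3=1$; for example, on the first block the matrix has trace $1$ and satisfies $(\frac12 H D)^3=I$. $\kappa$ is outer: an inner automorphism fixes the isomorphism classes of the three 8-dimensional representations, whereas $\kappa$ permutes the vector and half-spin representations. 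Concretely, $\mathfrak{k}_1=\mathfrak{o}(3)\oplus\mathfrak{o}(5)$ is mapped to $\kappa(\mathfrak{k}_1)$, which is not $\mathrm{Int}$-conjugate to it, or one can compare $F(\kappa,\mathfrak{o}(8))\cong\mathfrak{g}_2$ with the fixed algebras of inner order-3 automorphisms. Finally, $\sigma^L\kappa=\kappa\sigma^L$ amounts to $\sigma^L$ commuting with $\pi^Y$, given Step 1. Since $\sigma^L$ acts on each block by a diagonal sign matrix (each of $G_{01},G_{23},G_{45},G_{67}$ has sign $+$; in the block of $G_{04},G_{15},G_{26},G_{37}$ all have sign $-$), and each block is sign-homogeneous, $\sigma^L$ acts on every block by $\pm I$. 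Hence it commutes with $\pi^Y$.
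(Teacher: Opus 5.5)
Your proposal is correct. The paper gives no argument of its own here; it only cites Yokota (Lemma~1.3.2, Theorem~1.3.5) and Loos, so the comparison is with the cited source. Your Step~3 uses Yokota's mechanism: infinitesimal triality plus uniqueness makes $X\mapsto X'$ a homomorphism. Everything else you reduce to finite checks on the seven $\pi^Y$-invariant $4\times 4$ blocks of Lemma~\ref{Lem:yokota-basis}, and those checks do go through:
\begin{itemize}
\item each block of $\pi^Y$ is symmetric and orthogonal;
\item each block $K$ of $\kappa$ (the $\pi^Y$-block with first column negated) is orthogonal with $K^2=K^{T}$, hence $K^3=I$;
\item $\sigma^L$ is $+I$ on the three blocks containing $G_{01},G_{02},G_{03}$ and $-I$ on the other four.
\end{itemize}
What you gain is a self-contained, checkable proof. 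What the citation gains is brevity and the conceptual origin of $\kappa$.

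The step you flag as the main obstacle closes with no $\pi^L$-twist. Alternativity gives $(ax)y+x(ya)=a(xy)+(xy)a$. Since $L_{e_j}+R_{e_j}=-2G_{0j}$ and $L_{e_j}=-2F_{0j}$, this yields $G_{0j}(xy)=(F_{0j}x)y+x\bigl(-\tfrac12R_{e_j}y\bigr)$, i.e.\ $G_{0j}'=F_{0j}$. The $G_{0j}$ generate $\mathfrak{o}(8)$, and the set of $X$ admitting $X',X''$ is a subalgebra, so $X\mapsto X'$ is defined everywhere. Moreover $e_i(e_jx)=-e_j(e_ix)$ for $1\le i\ne j$ and $F_{i0}=-F_{0i}$, so $[F_{i0},F_{0j}]=F_{ij}$. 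Hence $G_{ij}'=F_{ij}$ for all $i<j$, and $X\mapsto X'$ is exactly $\pi^Y$.

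There are two soft spots.
\begin{itemize}
\item Your fallback, checking brackets only among the generators $G_{0j}$, would not show that $\pi^Y$ is a homomorphism. You would need, e.g., $\pi^Y[X,G_{0k}]=[\pi^YX,\pi^YG_{0k}]$ for all $28$ basis vectors $X$; this suffices because the set of $Y$ with $\pi^Y[X,Y]=[\pi^YX,\pi^YY]$ for all $X$ is a subalgebra.
\item Outerness is only asserted. Each of your three suggested routes rests on a fact you do not verify. A one-line check suffices: inner automorphisms are $\mathrm{Ad}(g)$ with $g\in SO(8)$, acting by matrix conjugation, so they preserve the rank of $X\in\mathfrak{o}(8)\subset M_8(\mathbb{R})$. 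But $G_{01}$ has rank $2$, while $\kappa(G_{01})=-F_{01}$, with $F_{01}(x)=-\tfrac12 e_1x$, is invertible.
\end{itemize}

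Finally, your sign $\kappa(G_{0j})=-F_{0j}$, $\kappa(G_{ij})=F_{ij}$ is the one forced by $\kappa=\pi^Y\circ\pi^L$. The formula displayed later in the paper has the opposite overall sign, which cannot define an automorphism. The paper's root computations ($\kappa(\alpha_1)=\alpha_3$, etc.) agree with your sign.
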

Based on the above, we define involutions of $\mathfrak{o}(8)$ by
\[ \theta_1 = \pi^L \sigma^L, \quad \theta_2 = \kappa \theta_1 \kappa^{-1}. \]
Then we can see that
\begin{align*} 
&\theta_{1}\theta_{2}=\pi^L \sigma^L \kappa \pi^L \sigma^L \kappa^{-1}
=\pi^L  \kappa \pi^L  \kappa^{-1}
=\pi^L  \pi^{Y} \pi^L \pi^L  \pi^L \pi^Y \\
=&\pi^L  \pi^{Y} \pi^L \pi^Y 
=\kappa^{-2}=\kappa.
\end{align*} 
Thus, the order of $\theta_{1}\theta_{2}$ is three.
By definition, we have 
\begin{align*}
\mathfrak{k}_1
=&\left\{\left.
\begin{pmatrix}
0&      & X       \\
     & Y&        \\
{-}^t X   &      & Z     \\
\end{pmatrix}\right|
\begin{array}{l}
X \in M(1, 4;\mathbb{R}),\\
Y\in \mathfrak{o}(3),\\
Z\in \mathfrak{o}(4)
\end{array}
\right\} \\
=&\mathbb{R} G_{04} \oplus \mathbb{R} G_{05} \oplus \mathbb{R} G_{06} \oplus \mathbb{R} G_{07} \\
&\oplus \mathbb{R} G_{12} \oplus \mathbb{R} G_{13} \oplus \mathbb{R} G_{23} \oplus \mathbb{R} G_{45} \oplus \mathbb{R} G_{46} \oplus \mathbb{R} G_{47} \\ 
&\oplus \mathbb{R} G_{56} \oplus \mathbb{R} G_{57} \oplus \mathbb{R} G_{67}, \\ 
\mathfrak{p}_1
=&\left\{\left.
\begin{pmatrix}
&   X  &        \\
{-}^t X  & &Y        \\
   &    {-}^tY  &     \\
\end{pmatrix}\right|
\begin{array}{l}
X \in M(1, 3;\mathbb{R}),\\
Y \in M(3, 4;\mathbb{R})
\end{array}
\right\} \\
=&\mathbb{R} G_{01} \oplus \mathbb{R} G_{02} \oplus \mathbb{R} G_{03}  \\
&\oplus \mathbb{R} G_{14} \oplus \mathbb{R} G_{15} \oplus \mathbb{R} G_{16} \oplus \mathbb{R} G_{17} \\
&\oplus \mathbb{R} G_{24} \oplus \mathbb{R} G_{25} \oplus \mathbb{R} G_{26} \oplus \mathbb{R} G_{27} \\
&\oplus \mathbb{R} G_{34} \oplus \mathbb{R} G_{35} \oplus \mathbb{R} G_{36} \oplus \mathbb{R} G_{37} .
\end{align*}
Since
$\mathfrak{k}_2=\kappa (\mathfrak{k}_1), \mathfrak{p}_2=\kappa (\mathfrak{p}_1)$
and
\[ \kappa(G_{0i}) = F_{0i} \quad (1 \le i \le 7),\quad   \kappa(G_{ij}) = -F_{ij} \quad (1 \le i,j \le 7),  \]
we have
\begin{align*}
\mathfrak{k}_2
=&\mathbb{R} F_{04} \oplus \mathbb{R} F_{05} \oplus \mathbb{R} F_{06} \oplus \mathbb{R} F_{07} \\
&\oplus \mathbb{R} F_{12} \oplus \mathbb{R} F_{13} \oplus \mathbb{R} F_{23} \oplus \mathbb{R} F_{45} \oplus \mathbb{R} F_{46} \oplus \mathbb{R} F_{47} \\ 
&\oplus \mathbb{R} F_{56} \oplus \mathbb{R} F_{57} \oplus \mathbb{R} F_{67}, \\
\mathfrak{p}_2
=&\mathbb{R} F_{01} \oplus \mathbb{R} F_{02} \oplus \mathbb{R} F_{03}  \\
&\oplus \mathbb{R} F_{14} \oplus \mathbb{R} F_{15} \oplus \mathbb{R} F_{16} \oplus \mathbb{R} F_{17} \\
&\oplus \mathbb{R} F_{24} \oplus \mathbb{R} F_{25} \oplus \mathbb{R} F_{26} \oplus \mathbb{R} F_{27} \\
&\oplus \mathbb{R} F_{34} \oplus \mathbb{R} F_{35} \oplus \mathbb{R} F_{36} \oplus \mathbb{R} F_{37}. 
\end{align*}
Hence we get two canonical decompositions 
$\mathfrak{g}
=\mathfrak{k}_1 \oplus \mathfrak{p}_1
=\mathfrak{k}_2 \oplus \mathfrak{p}_2.$

Moreover, using Lemma~\ref{Lem:yokota-basis} shows
\begin{align*}
\mathfrak{k}_1 \cap \mathfrak{k}_2 
=& \{ aG_{23} + bG_{45} + cG_{67} | a+b+c=0 \} \\
&\oplus \{ aG_{13} + bG_{46} + cG_{57} | -a-b+c=0 \} \\
&\oplus \{ aG_{12} + bG_{47} + cG_{56} | a+b+c=0 \} \\
=& \{ aF_{23} + bF_{45} + cF_{67} | a+b+c=0 \} \\
&\oplus \{ aF_{13} + bF_{46} + cF_{57} | -a-b+c=0 \} \\
&\oplus \{ aF_{12} + bF_{47} + cF_{56} | a+b+c=0 \} \\
\end{align*}
and
\begin{align*}
\mathfrak{p}_1 \cap \mathfrak{p}_2 
=& \{ aG_{15} + bG_{26} + cG_{37} | -a+b-c=0 \} \\
&\oplus \{ aG_{14} + bG_{27} + cG_{36} | a-b-c=0 \} \\
&\oplus \{ aG_{17} + bG_{24} + cG_{35} | -a-b+c=0 \} \\
&\oplus \{ aG_{16} + bG_{25} + cG_{34} | a+b+c=0 \}\\
=& \{ aF_{15} + bF_{26} + cF_{37} | -a+b-c=0 \} \\
&\oplus \{ aF_{14} + bF_{27} + cF_{36} | a-b-c=0 \} \\
&\oplus \{ aF_{17} + bF_{24} + cF_{35} | -a-b+c=0 \} \\
&\oplus \{ aF_{16} + bF_{25} + cF_{34} | a+b+c=0 \}.
\end{align*}
\subsubsection*{Maximal abelian subspaces and root space decompositions
}
In this subsection, we verify that the compact symmetric triad $(\mathfrak{g}, \theta_{1}, \theta_{2})$ is canonical.
It is sufficient to find a maximal abelian subalgebra \(\mathfrak{t}\) satisfying the conditions in Definition~\ref{dfn:canonical}. 
Let $\mathfrak{t} = \mathrm{span}\{ G_{04}, G_{15}, G_{26}, G_{37} \}$. 
This is a maximal abelian subalgebra of $\mathfrak{g}=\mathfrak{o}(8)$. 
From Lemma~\ref{Lem:yokota-basis}, 
we have $\mathfrak{t} =\mathrm{span}\{ F_{04}, F_{15}, F_{26}, F_{37} \}$ and 
$\mathfrak{t}$ is $\theta_{i}$-invariant ($i=1, 2$).
We also have 
$\mathfrak{a}_1 :=\mathfrak{t}\cap {\mathfrak p}_1= \mathrm{span}\{ G_{15}, G_{26}, G_{37} \}$ and 
$\mathfrak{a}_2 := \mathfrak{t}\cap {\mathfrak p}_2= \mathrm{span}\{ F_{15}, F_{26}, F_{37} \}$ 
are maximal abelian subspaces of 
$\mathfrak{p}_{1}$ and $\mathfrak{p}_{2}$, respectively.
We have $\mathfrak{a}:= \mathfrak{a}_1 \cap \mathfrak{a}_2=\{ aG_{15} + bG_{26} + cG_{37} | -a+b-c=0 \} =\{ aF_{15} + bF_{26} + cF_{37} | -a+b-c=0 \}.$ 
We can see that $\mathfrak{t} = \mathfrak{a}_1 + \mathfrak{a}_2$ and 
$\mathfrak{a}$ is a maximal abelian subspace of $\mathfrak{p}_{1} \cap \mathfrak{p}_{2}$.

Now, we consider the root space decomposition of $\mathfrak{o}(8)$ with respect to $\mathfrak{t}$.
For an invariant inner product on \(\mathfrak{g}\) such that \(\{ G_{ij} \mid 0\leq i<j\leq 7\}\) is an orthonormal basis, the set of roots is given by
\[ \{ \pm G_{i,i+4} \pm G_{j, j+4} | 0 \le i < j \le 3 \} 
= \{ \pm F_{i,i+4} \pm F_{j, j+4} | 0 \le i < j \le 3 \} \cong D_{4}.\]
Let
\begin{align*}
\alpha_1&=G_{04}-G_{15}=F_{04}-F_{15}, \\
\alpha_2&=G_{15}+G_{26}=F_{15}+F_{26} , \\
\alpha_3&=-G_{26}+G_{37}= -F_{04}-F_{15}, \\
\alpha_4&=-G_{04}-G_{15}=-F_{26}+F_{37}.\\
\end{align*}
Then, $\Pi =\{\alpha_1, \alpha_2, \alpha_3, \alpha_4\}$ is a fundamental system of $D_{4}$.
Moreover, from this expression, we can verify that
\begin{align}
\theta_1 (\alpha_1)=&-\alpha_4, \;\theta_1 (\alpha_2)=-\alpha_2, \;\theta_1 (\alpha_3)=-\alpha_3, \; \theta_1 (\alpha_4)=-\alpha_1,  \label{simpleroots}  \\
\theta_2 (\alpha_1)=&-\alpha_3, \;\theta_2 (\alpha_2)=-\alpha_2,\; \theta_2 (\alpha_3)=-\alpha_1, \; \theta_2 (\alpha_4)=-\alpha_4. \nonumber
\end{align}
Thus, we see that the order of $\theta_{1}\theta_{2}|_{\mathfrak{t}}$ is three.
The set of positive roots \(\Delta^{+}\) with respect to this fundamental system is given by
\begin{align*}
\Delta^{+}=\{ &\alpha_1, \alpha_2, \alpha_3, \alpha_4, \alpha_1+\alpha_2, \alpha_1+\alpha_2+\alpha_3, \alpha_1+\alpha_2+\alpha_3+\alpha_4, \\
&\alpha_1+2\alpha_2+\alpha_3+\alpha_4, \alpha_2+\alpha_3+\alpha_4, \alpha_2+\alpha_3, \alpha_2+\alpha_4, \alpha_1+\alpha_2+\alpha_4 \}\cong D_{4}^{+}. 
\end{align*}

The spin group $Spin(8)$ has $\mathfrak{o}(8)$ as its Lie algebra. 
Since $Spin(8)$ is simply connected, any automorphism of $\mathfrak{o}(8)$ lifts to an automorphism of 
$Spin(8)$. 
Denoting the lifted automorphism by the same symbol, 
$(Spin(8), \theta_1 , \theta_2)$ is a compact symmetric triad.
Summarizing the above, we obtain the following proposition.

\begin{proposition}
The compact symmetric triad $(Spin(8),\theta_1,\theta_2)$ is canonical and
the above maximal abelian subalgebra $\mathfrak{t}$ of $\mathfrak{o}(8)$ is canonical with respect to $(Spin(8),\theta_1,\theta_2)$. 
\end{proposition}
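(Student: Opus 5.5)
The plan is to check the two conditions (C1) and (C2) of Definition~\ref{dfn:canonical} directly for the explicit $\mathfrak{t}=\mathrm{span}\{G_{04},G_{15},G_{26},G_{37}\}$, using the formulas already computed in this section.

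\textbf{Step 1: condition (C1)(i).} By Lemma~\ref{Lem:yokota-basis}, $\mathfrak{t}=\mathrm{span}\{F_{04},F_{15},F_{26},F_{37}\}$. Since $\theta_1=\mathrm{Ad}(I_{1,7}I_{4,4})$ is diagonal in the $G_{ij}$ basis with sign $\pm1$, it preserves $\mathfrak{t}$. Since $\theta_2=\kappa\theta_1\kappa^{-1}$ acts diagonally on the $F_{ij}$ with the same signs, it also preserves $\mathfrak{t}$. Reading off the signs gives $\mathfrak{a}_1=\mathfrak{t}\cap\mathfrak{p}_1=\mathrm{span}\{G_{15},G_{26},G_{37}\}$ and $\mathfrak{a}_2=\mathrm{span}\{F_{15},F_{26},F_{37}\}$. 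The rank of $(\mathfrak{o}(8),\mathfrak{o}(3)\oplus\mathfrak{o}(5))$ is $3$. Hence $\mathfrak{a}_1$ is maximal abelian in $\mathfrak{p}_1$, and $\mathfrak{a}_2=\kappa(\mathfrak{a}_1)$ is maximal abelian in $\mathfrak{p}_2=\kappa(\mathfrak{p}_1)$. This requires $\kappa(\mathfrak{a}_1)=\mathfrak{a}_2$, which follows from $\kappa(G_{ij})=-F_{ij}$.

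\textbf{Step 2: condition (C1)(ii).} Take the ordering induced by the fundamental system $\Pi$. By (\ref{simpleroots}), each $-\theta_i$ permutes $\Pi$. Explicitly, $-\theta_1$ swaps $\alpha_1,\alpha_4$ and fixes $\alpha_2,\alpha_3$, and $-\theta_2$ swaps $\alpha_1,\alpha_3$ and fixes $\alpha_2,\alpha_4$. Therefore $-\theta_i(\Delta^+)=\Delta^+$, so $-\theta_i(\alpha)>0$ for every $\alpha\in\Delta^+$, and in particular for $\alpha\in\Delta^+-\Delta_0^i$. Note that here $\Delta_0^i=\emptyset$ since no simple root is fixed by $\theta_i$. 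The formulas (\ref{simpleroots}) themselves are checked by applying the sign patterns of $\theta_1$ on the $G$-expressions and of $\theta_2$ on the $F$-expressions of the $\alpha_j$.

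\textbf{Step 3: condition (C2).} We have $\theta_1\theta_2=\kappa$, which has order $3$ by Proposition~\ref{defofkappa}. On $\mathfrak{t}$, the composition $\theta_1\theta_2$ acts on $\Pi$ as the product of the two transpositions, which is a $3$-cycle on $\{\alpha_1,\alpha_3,\alpha_4\}$. Hence $\mathrm{ord}(\theta_1\theta_2|\mathfrak{t})=3=\mathrm{ord}(\theta_1\theta_2)$.

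\textbf{Lifting to $Spin(8)$.} Since $Spin(8)$ is simply connected, the involutions lift, and canonicity depends only on Lie algebra data. The main obstacle is the bookkeeping in Step~1 and in (\ref{simpleroots}): one must confirm that the change of basis in Lemma~\ref{Lem:yokota-basis} maps $\mathrm{span}\{G_{i,i+4}\}$ to itself, and track the signs of $\kappa$ on $G_{0i}$ versus $G_{ij}$. I would verify this block by block using the $F_{04},F_{15},F_{26},F_{37}$ group of Lemma~\ref{Lem:yokota-basis}.
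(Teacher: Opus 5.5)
Your proposal is correct and follows essentially the same route as the paper, which likewise verifies (C1) and (C2) directly for $\mathfrak{t}=\mathrm{span}\{G_{04},G_{15},G_{26},G_{37}\}$. The ingredients are the same: $\theta_i$-invariance of $\mathfrak{t}$, maximality of $\mathfrak{a}_1,\mathfrak{a}_2$, the action of $\theta_i$ on $\Pi$, and $\theta_1\theta_2=\kappa$ acting as a $3$-cycle on $\Pi$, followed by lifting to $Spin(8)$. Your extra justifications only make explicit what the paper leaves to ``summarizing the above'': the rank count for maximality, and the fact that $-\theta_i(\Pi)=\Pi$ gives $-\theta_i(\Delta^+)=\Delta^+$. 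That last fact, rather than ``no simple root is fixed'', is the actual reason $\Delta_0^i=\emptyset$, though you do not need this.
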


In the following, we explicitly decompose the Lie algebra $\mathfrak{o}(8)$ to determine symmetric triad and their multiplicities corresponding to $(Spin(8),\theta_1,\theta_2)$. 
For each positive root $\alpha$, there exist unit vectors $S_{\alpha}, T_{\alpha}$ in \(\mathfrak{o}(8)\) such that
\[
[H, S_{\alpha}]= \langle \alpha , H \rangle T_{\alpha},\quad 
[H, T_{\alpha}]= - \langle \alpha , H \rangle S_{\alpha},\quad 
[S_{\alpha}, T_{\alpha}]=\langle \alpha , \alpha \rangle \alpha .
\]
For instance, $S_{\alpha}$ and $T_{\alpha}$ can be chosen as in the following table.

\begin{center}
\begin{tabular}{| l|l| l |l}\hline
$\alpha$ & $\sqrt{2} S_{\alpha}$& $\sqrt{2} T_{\alpha}$\\ \hline
$\alpha_1$ & $G_{01}+G_{45}$ &$G_{05}+G_{14}$ \\
$\alpha_2$ & $G_{16}-G_{25}$&$ G_{12}-G_{56}$ \\
$\alpha_3$ & $-G_{23}-G_{67}$&$G_{27}+G_{36}$ \\
$\alpha_4$ & $-G_{01}+G_{45}$&$-G_{05}+G_{14}$ \\
$\alpha_1+\alpha_2$ & $G_{06}-G_{24} $&$G_{02}-G_{46}$  \\
$\alpha_2+\alpha_3$ & $G_{17}-G_{35}$&$G_{13}-G_{57}$ \\
$\alpha_2+\alpha_4$ & $-G_{06}-G_{24} $&$-G_{02}-G_{46}$ \\
$\alpha_1+\alpha_2+\alpha_3$ & $G_{07}-G_{34}$&$G_{03}-G_{47}$ \\
$\alpha_1+\alpha_2+\alpha_4$ & $G_{16}+G_{25}$&$G_{12}+G_{56}  $\\
$\alpha_2+\alpha_3+\alpha_4$ & $-G_{07}-G_{34}$&$-G_{03}-G_{47} $\\
$\alpha_1+\alpha_2+\alpha_3+\alpha_4$ & $G_{13}+G_{57}$&$ -G_{17}-G_{35}$\\
$\alpha_1+2\alpha_2+\alpha_3+\alpha_4$ & $G_{27}-G_{36}$&$G_{23}-G_{67}$\\ \hline
\end{tabular}
\end{center}
From this table and the relations \( \kappa(G_{0i}) = F_{0i} \quad (1 \le i \le 7),\quad   \kappa(G_{ij}) = -F_{ij} \quad (1 \le i,j \le 7) \), it follows that
$\kappa(\alpha_1)=\alpha_3,  \kappa(\alpha_3)=\alpha_4, \kappa(\alpha_4)=\alpha_1$, and
$\kappa(\alpha_2) =\alpha_2$.
Furthermore, we can see that
\begin{align*}
S_{\kappa (\alpha)} =\kappa (S_{\alpha}),& \quad T_{\kappa (\alpha)} =\kappa (T_{\alpha})\\
S_{-\theta_{i} (\alpha)} = \theta_{i} (S_{\alpha}),& \quad T_{-\theta_{i} (\alpha)} =- \theta_{i} (T_{\alpha}).
\end{align*}
Since the orthogonal projection from \(\mathfrak{t}\) to \(\mathfrak{a}\) is given by \(X \mapsto \frac{X+ \kappa(X)+ \kappa^2(X)}{3}\), 
we have
\[
\tilde{\Sigma}^{+}=\{\lambda_{1}, \lambda_{2}, \lambda_{1}+\lambda_{2}, 2\lambda_{1}+\lambda_{2}, 3\lambda_{1}+\lambda_{2}, 3\lambda_{1}+2\lambda_{2}\}\cong{G_{2}}^{+}
\]
where
\(
\lambda_{1}=\frac{\alpha_{1}+\alpha_{3}+\alpha_{4} }{3}, 
\lambda_{2}=\alpha_{2}.
\)

Then \(\mathfrak{g}^{\mathbb{C} }\) and \(\mathfrak{g}\) decomposes as follows.
\begin{align*}
&\mathfrak{g}^{\mathbb{C} }
=\mathfrak{g}(0) \oplus \sum_{\lambda \in \tilde{\Sigma}^{+}} (\mathfrak{g}(\mathfrak{a}, \lambda) \oplus \mathfrak{g}(\mathfrak{a}, -\lambda)),\\
&\mathfrak{g}
=(\mathfrak{g}(0) \cap \mathfrak{g})\oplus
 \sum_{\lambda \in \tilde{\Sigma}^{+}}
( (\mathfrak{g}(\mathfrak{a}, \lambda) \oplus \mathfrak{g}(\mathfrak{a}, -\lambda))\cap \mathfrak{g}).
\end{align*}
In this case, $\mathfrak{g}(0)=\mathfrak{t}^{\mathbb{C}}$ holds.
Then we can see that 
\begin{align*}
((\mathfrak{g}(\mathfrak{a}, \lambda) \oplus \mathfrak{g}(\mathfrak{a}, -\lambda)))\cap\mathfrak{g}
=&\{X \in \mathfrak{g} \mid [H, [H, X]] =-\langle H, \lambda \rangle^{2} X \ (H\in \mathfrak{a})\} \\
=&\mathrm{span}\{S_{\alpha} , T_{\alpha } \mid   \lambda =\frac{\alpha + \kappa(\alpha)+ \kappa^2(\alpha)}{3}  \},
\end{align*}

\begin{align*}
&(\mathfrak{g}(\mathfrak{a}, \lambda_1) \oplus \mathfrak{g}(\mathfrak{a}, -\lambda_1) )\cap\mathfrak{g}
= \mathrm{span}
\begin{Bmatrix}
    G_{01}+G_{45}, & -G_{23}-G_{67}, & -G_{01}+G_{45} \\
    G_{05}+G_{14}, &  G_{27}+G_{36}, & -G_{05}+G_{14}
\end{Bmatrix},
\\[2em]
&(\mathfrak{g}(\mathfrak{a},\lambda_2) \oplus \mathfrak{g}(\mathfrak{a}, -\lambda_2) )\cap\mathfrak{g}
= \mathrm{span}
\begin{Bmatrix}
    G_{16}-G_{25} \\
    G_{12}-G_{56}
\end{Bmatrix},
\\[2em]
&(\mathfrak{g}(\mathfrak{a}, \lambda_1+\lambda_2) \oplus \mathfrak{g}(\mathfrak{a}, -(\lambda_1+\lambda_2)))\cap\mathfrak{g} \\
&\qquad = \mathrm{span}
\begin{Bmatrix}
    G_{06}-G_{24}, & G_{17}-G_{35}, & -G_{06}-G_{24} \\
    G_{02}-G_{46}, & G_{13}-G_{57}, & -G_{02}-G_{46}
\end{Bmatrix},
\\[2em]
&(\mathfrak{g}(\mathfrak{a}, 2\lambda_1+\lambda_2) \oplus \mathfrak{g}(\mathfrak{a}, -(2\lambda_1+\lambda_2)))\cap\mathfrak{g}\\
&\qquad = \mathrm{span}
\begin{Bmatrix}
    G_{07}-G_{34}, &  G_{16}+G_{25}, & -G_{07}-G_{34} \\
    G_{03}-G_{47}, &  G_{12}+G_{56}, & -G_{03}-G_{47}
\end{Bmatrix},
\\[2em]
&(\mathfrak{g}(\mathfrak{a}, 3\lambda_1+\lambda_2) \oplus \mathfrak{g}(\mathfrak{a}, -(3\lambda_1+\lambda_2)))\cap\mathfrak{g} 
= \mathrm{span}
\begin{Bmatrix}
     G_{13}+G_{57} \\
    -G_{17}-G_{35}
\end{Bmatrix},
\\[2em]
&(\mathfrak{g}(\mathfrak{a}, 3\lambda_1+2\lambda_2) \oplus \mathfrak{g}(\mathfrak{a}, -(3\lambda_1+2\lambda_2)))\cap\mathfrak{g} 
= \mathrm{span}
\begin{Bmatrix}
    G_{27}-G_{36} \\
    G_{23}-G_{67}
\end{Bmatrix}.
\end{align*}
For the long roots \(\lambda = \lambda_{2}, 3\lambda_{1}+\lambda_{2}, 3\lambda_{1}+2\lambda_{2}\), we have \(\lambda \in \Delta\).
Since
\(\mathfrak{g}(\mathfrak{a},\lambda) \oplus \mathfrak{g}(\mathfrak{a}, -\lambda) 
=
 \mathrm{span}\{S_{\lambda }, T_{\lambda }\}
\)
and \( \kappa (S_{\lambda })=S_{\lambda }, \kappa (T_{\lambda })=T_{\lambda }\), 
we have
\(\mathfrak{g}(\mathfrak{a},\lambda) \oplus \mathfrak{g}(\mathfrak{a}, -\lambda) = \mathfrak{g}(\mathfrak{a},\lambda, 1) \oplus \mathfrak{g}(\mathfrak{a}, -\lambda, 1)\). 
This means that 
\(\mathfrak{g}(\mathfrak{a},\pm \lambda,\omega) = \mathfrak{g}(\mathfrak{a}, \pm \lambda,\overline{\omega})=\{0\}. \)
Then 
we obtain
\[
\mathfrak{g}(\mathfrak{a},\lambda,1)=\mathbb{C}(S_{\lambda }-\sqrt{-1} T_{\lambda }),\qquad 
\mathfrak{g}(\mathfrak{a},-\lambda, 1)=\mathbb{C}(S_{\lambda }+\sqrt{-1} T_{\lambda }).
\]
For the short roots \(\lambda = \lambda_{1}, \lambda_{1}+\lambda_{2}, 2\lambda_{1}+\lambda_{2}\), if we choose an \(\alpha \in \Delta^{+}\) such that 
\(
\lambda=\frac{(\alpha+ \kappa(\alpha)+ \kappa^2(\alpha))}{3}\),
we have
\[\mathfrak{g}(\mathfrak{a},\lambda) \oplus \mathfrak{g}(\mathfrak{a}, -\lambda) 
=
 \mathrm{span}\{S_{\alpha }, S_{\kappa( \alpha) }, S_{\kappa^{2}(\alpha) }, T_{\alpha }, T_{\kappa( \alpha) }, T_{\kappa^{2}(\alpha) } \}.
\]
Thus, by setting
\begin{align*}
X_{\lambda}^{1}=\frac{S_{\alpha }+ S_{\kappa( \alpha) }+ S_{\kappa^{2}(\alpha)}}{\sqrt{3}},\quad 
Y_{\lambda}^{1}=\frac{T_{\alpha }+ T_{\kappa( \alpha) }+ T_{\kappa^{2}(\alpha)}}{\sqrt{3}},
\end{align*}
we can see that 
$X_{\lambda}^{1}-\sqrt{-1} Y_{\lambda}^{1} \in \mathfrak{g}(\mathfrak{a},\lambda,1)$
and
$X_{\lambda}^{1}+\sqrt{-1} Y_{\lambda}^{1} \in \mathfrak{g}(\mathfrak{a},-\lambda, 1)$. 

Furthermore, if we take \(\alpha\) satisfying \(\theta_{1}(\alpha) =-\alpha\) and set
\begin{align*}
\sqrt{2}X_{\lambda}^{\omega}= \frac{S_{\kappa (\alpha) }+ S_{\kappa^2( \alpha) }- 2 S_{\alpha}}{\sqrt{6}} +\frac{T_{\kappa (\alpha) }- T_{\kappa^2( \alpha)}}{\sqrt{2}},\\ 
\sqrt{2}Y_{\lambda}^{\omega}=-\frac{S_{\kappa (\alpha) }- S_{\kappa^2( \alpha)}}{\sqrt{2}}+\frac{T_{\kappa (\alpha) }+T_{\kappa^2( \alpha) }- 2 T_{\alpha}}{\sqrt{6}},\\
\sqrt{2}X_{\lambda}^{\bar{\omega}}=\frac{S_{\kappa (\alpha) }+ S_{\kappa^2( \alpha) }- 2 S_{\alpha}}{\sqrt{6}} -\frac{T_{\kappa (\alpha) }- T_{\kappa^2( \alpha)}}{\sqrt{2}},\\
\sqrt{2}Y_{\lambda}^{\bar{\omega}}=\frac{S_{\kappa (\alpha) }- S_{\kappa^2( \alpha)}}{\sqrt{2}}+\frac{T_{\kappa (\alpha) }+ T_{\kappa^2( \alpha) }- 2 T_{\alpha}}{\sqrt{6}},\end{align*}
then we have
\(X_{\lambda}^{\omega}, X_{\lambda}^{\bar{\omega}}  \in \mathfrak{k}_{1} \) and
\(Y_{\lambda}^{\omega}, Y_{\lambda}^{\bar{\omega}}  \in \mathfrak{p}_{1} \).

In this case,
\begin{align*}
\kappa (X_{\lambda}^{\omega})
=&\frac{S_{\kappa^2 (\alpha) }+ S_{ \alpha }- 2 S_{\kappa(\alpha)}}{2\sqrt{3}} +\frac{T_{\kappa^2 (\alpha) }- T_{ \alpha}}{2}\\ 
=&-\frac{1}{2} \frac{1}{2\sqrt{3}}(S_{\kappa (\alpha) }+ S_{\kappa^2( \alpha) }- 2 S_{\alpha})
-\frac{3}{2} \frac{1}{2\sqrt{3}}(S_{\kappa (\alpha) }- S_{\kappa^2( \alpha)})\\
&-\frac{1}{2}\frac{1}{2}(T_{\kappa (\alpha) }- T_{\kappa^2( \alpha)})
+\frac{1}{2}\frac{1}{2}(T_{\kappa (\alpha) }+T_{\kappa^2( \alpha) }- 2 T_{\alpha}) \\
=&-\frac{1}{2}X_{\lambda}^{\omega} +\frac{\sqrt{3}}{2}Y_{\lambda}^{\omega}
=\mathrm{re} (\omega) X_{\lambda}^{\omega} +\mathrm{im} (\omega)Y_{\lambda}^{\omega}, \\
\kappa (Y_{\lambda}^{\omega})
=&-\frac{S_{\kappa^2 (\alpha) }- S_{ \alpha}}{2}+\frac{T_{\kappa^2 (\alpha) }+T_{ \alpha }- 2 T_{\kappa(\alpha)}}{2\sqrt{3}}\\
=&-\frac{1}{2} \frac{1}{2}(S_{\kappa (\alpha) }+ S_{\kappa^2( \alpha) }- 2 S_{\alpha})
+\frac{1}{2} \frac{1}{2}(S_{\kappa (\alpha) }- S_{\kappa^2( \alpha)})\\
&-\frac{3}{2}\frac{1}{2\sqrt{3}}(T_{\kappa (\alpha) }- T_{\kappa^2( \alpha)})
-\frac{1}{2}\frac{1}{2\sqrt{3}}(T_{\kappa (\alpha) }+T_{\kappa^2( \alpha) }- 2 T_{\alpha}) \\
=&-\frac{\sqrt{3}}{2}X_{\lambda}^{\omega} -\frac{1}{2}Y_{\lambda}^{\omega}
=-\mathrm{im}( \omega )X_{\lambda}^{\omega} +\mathrm{re}(\omega )Y_{\lambda}^{\omega},
\end{align*}
\begin{align*}
\kappa (X_{\lambda}^{\bar{\omega}})
=&\frac{S_{\kappa^2 (\alpha) }+ S_{ \alpha }- 2 S_{\kappa(\alpha)}}{2\sqrt{3}} -\frac{T_{\kappa^2 (\alpha) }- T_{ \alpha}}{2}\\ 
=&-\frac{1}{2} \frac{1}{2\sqrt{3}}(S_{\kappa (\alpha) }+ S_{\kappa^2( \alpha) }- 2 S_{\alpha})
-\frac{3}{2} \frac{1}{2\sqrt{3}}(S_{\kappa (\alpha) }- S_{\kappa^2( \alpha)})\\
&+\frac{1}{2}\frac{1}{2}(T_{\kappa (\alpha) }- T_{\kappa^2( \alpha)})
-\frac{1}{2}\frac{1}{2}(T_{\kappa (\alpha) }+T_{\kappa^2( \alpha) }- 2 T_{\alpha}) \\
=&-\frac{1}{2}X_{\lambda}^{\bar{\omega}} -\frac{\sqrt{3}}{2}Y_{\lambda}^{\bar{\omega}}
= \mathrm{re} (\bar{\omega})X_{\lambda}^{\bar{\omega}} +\mathrm{im} (\bar{\omega})Y_{\lambda}^{\bar{\omega}}, \\
\kappa (Y_{\lambda}^{\bar{\omega}})
=&\frac{S_{\kappa^2 (\alpha) }- S_{ \alpha}}{2}+\frac{T_{\kappa^2 (\alpha) }+T_{ \alpha }- 2 T_{\kappa(\alpha)}}{2\sqrt{3}}\\
=&\frac{1}{2} \frac{1}{2}(S_{\kappa (\alpha) }+ S_{\kappa^2( \alpha) }- 2 S_{\alpha})
-\frac{1}{2} \frac{1}{2}(S_{\kappa (\alpha) }- S_{\kappa^2( \alpha)})\\%
&-\frac{3}{2}\frac{1}{2\sqrt{3}}(T_{\kappa (\alpha) }- T_{\kappa^2( \alpha)})
-\frac{1}{2}\frac{1}{2\sqrt{3}}(T_{\kappa (\alpha) }+T_{\kappa^2( \alpha) }- 2 T_{\alpha}) \\
=&\frac{\sqrt{3}}{2}X_{\lambda}^{\bar{\omega}} -\frac{1}{2}Y_{\lambda}^{\bar{\omega}}
= -\mathrm{im}( \bar{\omega} ) X_{\lambda}^{\bar{\omega}} +\mathrm{re}(\bar{\omega} ) Y_{\lambda}^{\bar{\omega}}.
\end{align*}
From the above calculations, we obtain
\begin{align*}
\mathfrak{g}(\mathfrak{a},\lambda,\omega )=\mathbb{C}(X_{\lambda}^{\omega}-\sqrt{-1} Y_{\lambda}^{\omega}),\qquad 
\mathfrak{g}(\mathfrak{a},-\lambda, \bar{\omega})=\mathbb{C}(X_{\lambda}^{\omega}+\sqrt{-1} Y_{\lambda}^{\omega}), \\
\mathfrak{g}(\mathfrak{a},\lambda,\bar{\omega} )=\mathbb{C}(X_{\lambda}^{\bar{\omega}}-\sqrt{-1} Y_{\lambda}^{\bar{\omega}}),\qquad 
\mathfrak{g}(\mathfrak{a},-\lambda, \omega)=\mathbb{C}(X_{\lambda}^{\bar{\omega}}+\sqrt{-1} Y_{\lambda}^{\bar{\omega}}).
\end{align*}
Therefore,
\begin{align*}
\Sigma_{1}=&G_{2}
=\{\pm \lambda_{1}, \pm \lambda_{2}, \pm (\lambda_{1}+\lambda_{2}), \pm (2\lambda_{1}+\lambda_{2}), \pm (3\lambda_{1}+\lambda_{2}), \pm (3\lambda_{1}+2\lambda_{2})\}, \\
\Sigma_{\omega }=&\Sigma_{\bar{\omega}}
=\{\text{short roots in } G_{2}\}=A_{2}
=\{\pm \lambda_{1}, \pm (\lambda_{1}+\lambda_{2}), \pm (2\lambda_{1}+\lambda_{2})\}
\end{align*}
and all multiplicities are \(1\).
Thus, letting \(H_{1}, H_{2}\) be the dual basis of \(\lambda_{1}, \lambda_{2}\) with respect to \(\langle , \rangle \), 
we have
\[
H_1=-G_{15}+G_{26}+2G_{37}=6 \lambda_{1}+3\lambda_{2}, 
\quad 
H_2=G_{26}+G_{37}=3 \lambda_{1}+2\lambda_{2}.
\]
Then we obtain
\begin{align}\label{eq:regularpointsord3}
\mathfrak{a}_{r}=&\{ H \in \mathfrak{a} \mid \langle \lambda , H \rangle \not\in \pi\mathbb{Z} \ (\lambda \in \Sigma_{1})  \}
\cap \{ H \in \mathfrak{a} \mid \langle \lambda , H \rangle \not\in  \pm \dfrac{\pi}{3} +\pi\mathbb{Z} \ (\lambda \in \Sigma_{\omega})  \}  \\
=&\left\{
\dfrac{\pi}{3}(x_1H_1+x_2H_2 ) \left|
\begin{array}{l}
x_1,x_2,x_1+x_2,2x_1+x_2,3x_1+x_2,3x_1+2x_2\not\in 3\mathbb{Z},\\
x_1\pm 1  ,x_1+x_2\pm1 ,2x_1+x_2\pm1 
\not\in 3\mathbb{Z}
\end{array}
\right.
\right\}\nonumber \\
=&\left\{
\dfrac{\pi}{3} (x_1H_1+x_2H_2)\left|
\begin{array}{l}
x_1,x_1+x_2,2x_1+x_2\not\in \mathbb{Z},\\
x_2,3x_1+x_2,3x_1+2x_2\not\in 3 \mathbb{Z}
\end{array}
\right.
\right\}, \nonumber
\end{align}
which is invariant under the multiplication by $-1$.

Thus, a cell $P$ is given by
\[
P=\left\{
\frac{\pi}{3}(x_1H_1+x_2H_2)\mid x_i>0\; (i=1,2), 2x_1+x_2<1
\right\}.
\]
Hence, the closure of $P$ is a simplex. 

Then we give an example of canonical form whose $\mathfrak{a}_{r}$ is not invariant under the multiplication by $-1$.

\begin{example}\label{ex:multiplicationby-1}\rm 
Take $(G,\theta_1,\theta_2)$ as in this section. 
We retain the notation in this section. 
Set $H:= \frac{\pi}{3}H_2\in\mathfrak{a}$. 
Then, for each $\lambda\in \tilde{\Sigma}$, we have $\langle\lambda,H\rangle \in\frac{\pi}{3}\mathbb{Z}$. 
Define two involutions $\theta_1'$ and $\theta_2'$ on $G$ by $\theta_1'=\tau_{\exp H}\theta_1 \tau_{\exp H}^{-1}$ and $\theta_2'=\theta_2$, 
where $\tau_{\exp H}$ is an inner automorphism on $G$ defined by $\exp H$. 
Then $(G,\theta_1',\theta_2')$ is also canonical, and 
\[
\Sigma_1'=A_2,\;
\Sigma_{\omega}'
=A_2\cup\{\lambda_2,3\alpha_1+\alpha_2,-(3\alpha_1+2\alpha_2)\},\;
\Sigma_{\bar{\omega}}'=\{-\lambda\mid \lambda\in \Sigma_{\omega}\}. 
\]
The set ${\mathfrak a}_r'$ of regular points of $(G,\theta_1',\theta_2')$ with respect to $\mathfrak{a}$ is given by 
\begin{align*}
{\mathfrak a}_r'
&=\left\{
H\in\mathfrak{a}\left|
\begin{array}{ll}
\langle \alpha,H\rangle \not\in\frac{\pi}{3}\mathbb{Z} & (\alpha\in A_2),\\
\langle \alpha,H\rangle +\frac{\pi}{3}\not\in\mathbb{Z} & (\alpha=\lambda_2,3\lambda_1+\lambda_2,-(3\lambda_1+2\lambda_2)
\end{array}
\right.
\right\}\\
&=\left\{\frac{\pi}{3}(y_1H_1+y_2H_2)\left|
\begin{array}{l}
y_i, y_1+y_2\not\in\mathbb{Z},\\
-y_1+y_2+1\not\in 3\mathbb{Z},\\
2y_1+y_2+1\not\in 3\mathbb{Z},\\
-y_1-2y_2+1\not\in 3\mathbb{Z}
\end{array}
\right.
\right\}.
\end{align*}
Put 
$X=\frac{\pi}{3}(-tH_1-\frac{1-t}{2}H_2)$. 
Then $-X\not\in\mathfrak{a}_r'$ for any $t$. 
When $t=1/2$, we have $X\in {\mathfrak a}_r'$. \hfill$\Box$
\end{example}
We describe $\#(W(G_2)x_0)\ (x_0\in\mathfrak{a}-\{0\})$. 
Without loss of generalities, we can assume 
$x_0=tH_1+(1-t)H_2\;(0\leq t\leq 1)$.
Then
\[
\#(W(G_2)x_0)=\left\{
\begin{array}{ll}
6 & (t=0,1),\\
12 & (0<t<1).
\end{array}
\right.
\]

\vspace{-4\baselineskip}

\begin{center}
\includegraphics[width=0.35\textwidth]{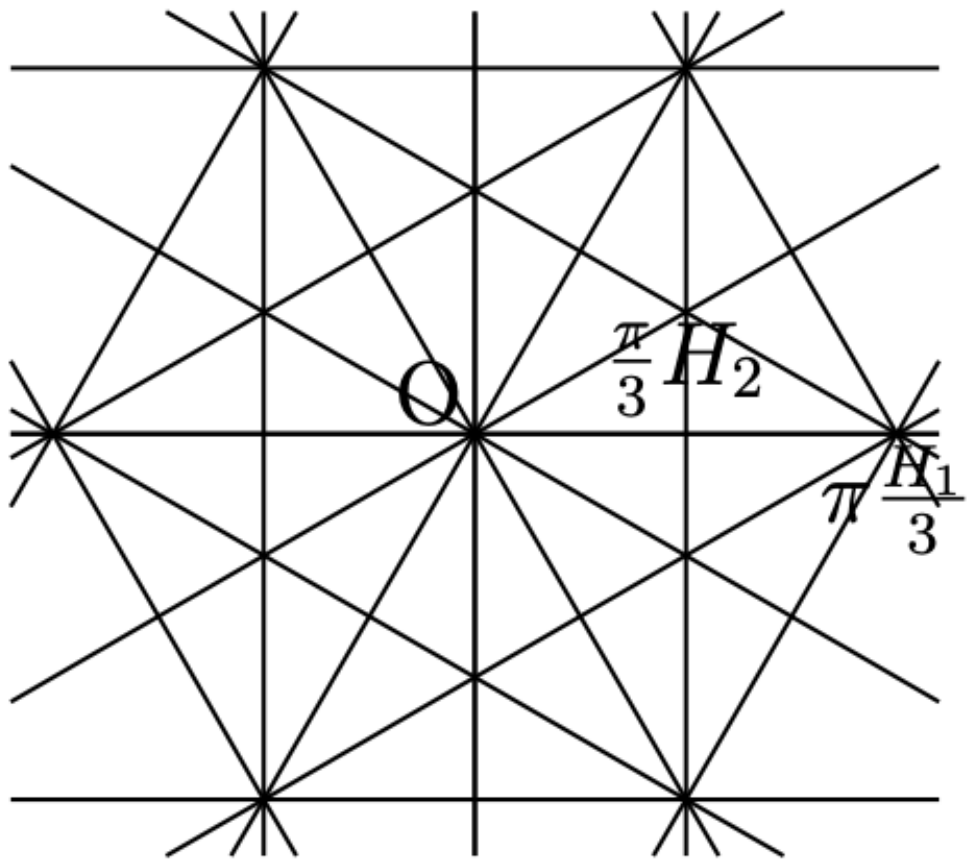}
\end{center}

\vspace{-4\baselineskip}

Recall that $G=Spin(8)$.
As is well known, there exists a covering homomorphism $p:Spin(8)\to SO(8)$, and furthermore, $\mathrm{Ad}(Spin(8))$ and $\mathrm{Ad}(SO(8))$ coincide as automorphism groups of $\mathfrak{o}(8)$. 
Therefore, $SO(8)$ acts transitively on each adjoint orbit. 
Without loss of generality, we can take the base point $x_{0}$ of the orbit from a maximal abelian subspace $\mathfrak{t}$ of $\mathfrak{o}(8)$.
Then we obtain the complex flag manifold $M=\mathrm{Ad}(G)x_0\cong SO(8)/(SO(8))_{x_{0}}$.
The isotropy subgroup $(SO(8))_{x_{0}}$ of the orbit $\mathrm{Ad}(G)x_{0}$ can be described by a painted Dynkin diagram determined by $x_0$ and the root system $\Delta$ (\cite[pp.~98--100]{Arvanitoyeorgos}).
Specifically, this diagram is determined by which roots of $\Delta$ are orthogonal to $x_{0}$.
Here, we assume $x_{0}\in \mathfrak{a}$ to consider the intersection of the real flag manifolds $L_{i}\ (i=1, 2)$ determined by $\theta_{i}\ (i=1, 2)$, 
and the above argument shows that we may take $x_0=tH_1+(1-t)H_2\;(0\leq t\leq 1)$.
The real forms $L_{i}\ (i=1, 2) $ can be expressed as $L_{i}=\mathrm{Ad}(K_{i})x_0\cong p(K_{i})/(p(K_{i})\cap(SO(8))_{x_{0}})$ for each $x_0=tH_1+(1-t)H_2\;(0\leq t\leq 1)$.
By Theorems~\ref{thm:int} and \ref{thm:discrete}, it follows that 
$$
\mathrm{Ad}(\exp H)L_{1}\cap L_{2}=W(G_2)x_0
$$
for $x_0=tH_1+(1-t)H_2\;(0\leq t\leq 1)$ and $H\in \mathfrak{a}_{r}$.

For $0<t<1$, $x_{0}$ is not orthogonal to any root. 
Therefore, $(SO(8))_{x_{0}} $ is isomorphic to $U(1)^4$.
Thus we have $M\cong  SO(8)/U(1)^4$. 
Geometrically, this corresponds to the full flag manifold $F_{2,2,2,2}(\mathbb{R}^{8})$.
Then $\#(\mathrm{Ad}(\exp H)L_{1}\cap L_{2})=12$.

When $t=0$, we have $x_{0}=H_{2}$, and $x_{0}$ is orthogonal to $\alpha_1, \alpha_3, \alpha_4$.
Consequently, $(SO(8))_{x_{0}} $ is isomorphic to $ U(2) \times SO(4)$.
Thus we have $M\cong SO(8)/(U(2) \times SO(4))$. 
Then $\#(\mathrm{Ad}(\exp H)L_{1}\cap L_{2})=6$.

When $t=1$, we have $x_{0}=H_{1}$, and $x_{0}$ is orthogonal to $\alpha_2$.
Consequently, $(SO(8))_{x_{0}} $ is isomorphic to $U(1)^2 \times U(2)$.
Thus we have $M\cong  SO(8)/(U(1)^2 \times U(2))$. 
Then $\#(\mathrm{Ad}(\exp H)L_{1}\cap L_{2})=6$.

Finally, we compare this example with known results. 
From the definition of $\theta_{2}$, we have $K_{2}=\kappa (K_{1})$. Since $\kappa=\theta_{1}\theta_{2}$, it follows that $\kappa (x_{0})=x_{0}$. 
From this, we see that
$$
\kappa (L_{1})=\kappa (\mathrm{Ad}(K_{1})x_0)= \mathrm{Ad}(\kappa(K_{1})) \kappa(x_0)=\mathrm{Ad}(K_{2}) x_0=L_{2}.
$$
Thus, since $L_{1}$ and $L_{2}$ are mapped to each other by the holomorphic isometry $\kappa$ of $M$, they are equivalent as real forms. 
However, the relative position of $L_{1}$ and $L_{2}$ is completely different from that of $L_{1}$ and a real form $L_{1}':=\mathrm{Ad}(g)L_{1}$ for some $g\in G$. 
The case where two real flag manifolds are mapped to each other by the action of $\mathrm{Ad}(G)$ (i.e., they are congruent) has already been investigated in \cite{IIOST}. 
In this case, under our setting, we consider the case of $(G, \theta_{1}, \theta_{1})$, and $\tilde{\Sigma}$ becomes a root system of type $B_{3}$. 
Therefore, when $L_{1}$ and $\mathrm{Ad}(g)L_{1} \ (g\in G)$ intersect discretely, the intersection is obtained as an orbit of the Weyl group of the root system of type $B_{3}$, and the numbers of elements are respectively: 
$\#(L_{1}\cap \mathrm{Ad}(g)L_{1})=48$ for $M\cong  SO(8)/U(1)^4$, 
$\#(L_{1}\cap \mathrm{Ad}(g)L_{1})=24$ for $M\cong  SO(8)/(U(1)^2 \times U(2))$ and 
$\#(L_{1}\cap \mathrm{Ad}(g)L_{1})=12$ for $M\cong  SO(8)/(U(2) \times SO(4))$. 
These are all different from the intersection numbers for the case of $L_{1}$ and $L_{2}$.

As shown in \cite{IIOST}, when $L_1$ and $\mathrm{Ad}(g)L_1$ intersect discretely for $g\in G$, 
their intersection number is equal to the sum $SB(L_1;\mathbb{Z}_2)$ of the $\mathbb{Z}_2$-betti number of $L_1$. 
Therefore, it should be noted that, by the outer automorphisms of $G$, the intersection number obtained in our case is strictly smaller than $SB(L_1;\mathbb{Z}_2)$.

\section*{Acknowledgment}
The authors would like to thank Professor Yoshihiro Ohnita for his kind explanation of the paper \cite{Ohnita} for them. 

The authors would also like to thank Professor Hiroshi Iriyeh for his valuable advice on symplectic geometry.

The first author would like to thank Professor Peter Quast for his warm hospitality during a stay at the University of Augsburg, part of which was spent preparing this manuscript.

\end{document}